\numberwithin{equation}{section}
\newcommand{\overbar}[1]{\mkern 1.5mu\overline{\mkern-1.5mu#1\mkern-1.5mu}\mkern 1.5mu}
\newcommand{\Z}{\mathbb{Z}}
\newcommand{\concat}{\op{.}}
\newcommand{\op}{\operatorname}
\newcommand{\exampleqed}{\nobreak \ifhmode \par \fi \hfill $\blacksquare$}
\newcommand\stdpt{
\begin{tikzpicture}
    \node[inner sep=0pt] (bullet) {\scalebox{1.3}{$\bullet$}};
\end{tikzpicture}
}
\newcommand\critpt{
\begin{tikzpicture}
    \node[inner sep=0pt] (bullet) {\scalebox{1.3}{$\bullet$}};
    \node[draw,circle,inner sep=1pt, minimum size=12pt, line width=0.4mm, yshift=0.35mm] at (bullet) {};
  \end{tikzpicture}
}
\newtheorem*{repeatmaintheorem}{Theorem \ref{thm:main theorem}}
\newtheorem*{repeatskeletoncorollary}{Corollary \ref{cor:skeleton theorem}}
\newtheorem*{repeatpropdualtrees}{Proposition \ref{prop:dual trees connected by simple moves}}
\newtheorem*{repeatdeftropicalcorrespondence}{Definition \ref{def:tropical correspondence}}
\newtheorem{theorem}{Theorem}[section]
\theoremstyle{plain}
\newtheorem{proposition}[theorem]
{Proposition}
\theoremstyle{plain}
\newtheorem{lemma}[theorem]
{Lemma}
\theoremstyle{plain}
\newtheorem{corollary}[theorem]
{Corollary}
\theoremstyle{definition}
\newtheorem{definition}[theorem]
{Definition}
\theoremstyle{definition}
\newtheorem{example}[theorem]
{Example}
\theoremstyle{definition}
\newtheorem{remark}[theorem]
{Remark}
\begin{document}

\title{Boundary stratifications of Hurwitz spaces}
\author{Darragh Glynn}
\address{Mathematics Institute\\University of Warwick}

\begin{abstract}
\begin{sloppypar} 
Let $\mathcal{H}$ be a Hurwitz space that parametrises holomorphic maps to $\mathbb{P}^1$. Abramovich, Corti and Vistoli, building on work of Harris and Mumford, describe a compactification $\overline{\mathcal{H}}$ with a natural boundary stratification. We show that the irreducible strata of $\overline{\mathcal{H}}$ are in bijection with combinatorial objects called decorated trees (up to a suitable equivalence), and that containment of irreducible strata is given by edge contraction of decorated trees. This combinatorial description allows us to define a tropical Hurwitz space, which we identify with the skeleton of the Berkovich analytification of $\overline{\mathcal{H}}$. The tropical Hurwitz space that we obtain is a refinement of a version defined by Cavalieri, Markwig and Ranganathan. We also provide an implementation that computes the stratification of $\overline{\mathcal{H}}$, and discuss applications to complex dynamics. 
\end{sloppypar}
\end{abstract}

\maketitle

\begin{section}{Introduction}
A \textit{Hurwitz space} $\mathcal{H}$ is a moduli space parametrising holomorphic maps of compact Riemann surfaces (with certain points of the source and target surfaces marked) that satisfy prescribed branching conditions, up to pre- and post-composition with isomorphisms. Hurwitz spaces are complex orbifolds that trace back to Hurwitz \cite{hurwitz1891} and their geometry is governed by rich combinatorics \cite{fulton69}, \cite{romagny2006hurwitz}, \cite{mochizuki1995geometry}.

Harris and Mumford give a compactification of $\mathcal{H}$ by a space of \textit{admissible covers} -- which are maps between nodal Riemann surfaces -- in order to study the moduli space $\mathcal{M}_{g}$ of genus-$g$ Riemann surfaces \cite{harris1982kodaira}. While the space of admissible covers is not normal, its normalisation $\overline{\mathcal{H}}$ is smooth, and Abramovich, Corti and Vistoli give a moduli interpretation to $\overline{\mathcal{H}}$ as a space that parametrises \textit{balanced twisted covers} \cite{abramovich2003twisted}. The boundary $\overline{\mathcal{H}} \smallsetminus \mathcal{H}$ is a normal crossings divisor, and this endows $\overline{\mathcal{H}}$ with a stratification by locally closed strata.

The compactification $\overline{\mathcal{H}}$ is an analogue of the Deligne-Mumford compactification $\overline{\mathcal{M}}_{g,n}$ of the moduli space of genus-$g$ Riemann surfaces with $n$ marked points. In both cases, when marked points collide, they `bubble off' onto a new component Riemann surface. In $\overline{\mathcal{M}}_{g,n}$ this produces a nodal Riemann surface, and in $\overline{\mathcal{H}}$ this produces a certain type of map between nodal Riemann surfaces.

The boundary $\overline{\mathcal{M}}_{g,n} \smallsetminus \mathcal{M}_{g,n}$ is also a normal crossings divisor, endowing $\overline{\mathcal{M}}_{g,n}$ with a stratification. In fact, if $\mathcal{H}$ parametrises maps whose target is a genus-$g$, $n$-marked Riemann surface, then the natural forgetful map to $\mathcal{M}_{g,n}$ is a finite covering map that extends to the boundary $\pi_\text{tgt}: \overline{\mathcal{H}} \rightarrow \overline{\mathcal{M}}_{g,n}$ and respects the stratifications of both spaces: a stratum of $\overline{\mathcal{H}}$ is a connected component of the preimage of a stratum of $\overline{\mathcal{M}}_{g,n}$ under $\pi_\text{tgt}$.

The stratification of $\overline{\mathcal{M}}_{g,n}$ is well understood: the strata are in bijection with isomorphism classes of \textit{$n$-marked genus-$g$ stable graphs} (see Section \ref{subsubsec:stable graphs}), and containment of closures of strata corresponds to edge contraction of these graphs.

In this paper, for $\mathcal{H}$ a Hurwitz space that parametrises maps to $\mathbb{P}^1$, we provide an analogous description of the strata of $\overline{\mathcal{H}}$ and of containment of closures of strata. The prescribed branching conditions that determine $\mathcal{H}$ are encoded in a \textit{portrait} (Definition \ref{def:portrait}), and for $P$ a portrait, we define combinatorial objects called \textit{$P$-decorated trees} (Definition \ref{def:decoration}) and an equivalence relation on $P$-decorated trees called \textit{Hurwitz equivalence} (Definition \ref{def:hurwitz equivalence of decorated trees}). We assign to each point of $\overline{\mathcal{H}}$ a Hurwitz equivalence class of $P$-decorated trees, and we prove that two points of $\overline{\mathcal{H}}$ are in the same stratum if and only if they are assigned the same Hurwitz equivalence class of $P$-decorated trees. The theorem is proved in Section \ref{sec:decorated trees from multicurves}.

\begin{theorem}\label{thm:main theorem}
Let $\mathcal{H}$ be a Hurwitz space that parametrises maps to $\mathbb{P}^1$, with $P$ its defining portrait. The irreducible strata of the compactification $\overline{\mathcal{H}}$ are in bijection with the set $\op{Stab}_{0,B}(P)$ of $P$-decorated trees up to Hurwitz equivalence. Furthermore, containment of closures of strata in $\overline{\mathcal{H}}$ corresponds to edge contraction of $P$-decorated trees.
\end{theorem}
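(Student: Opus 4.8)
The plan is to leverage the structural description recalled in the introduction: every stratum of $\overline{\mathcal{H}}$ is a connected component of $\pi_\text{tgt}^{-1}(\sigma)$ for some stratum $\sigma$ of $\overline{\mathcal{M}}_{0,B}$, and the strata of $\overline{\mathcal{M}}_{0,B}$ are indexed by $B$-marked genus-$0$ stable graphs (which are trees) with containment of closures given by edge contraction. The theorem therefore splits into three tasks: (i) attach to each point of $\overline{\mathcal{H}}$ a $P$-decorated tree whose underlying tree is the stable tree of its image under $\pi_\text{tgt}$; (ii) prove that the resulting Hurwitz equivalence class is constant on strata and that it separates them, yielding the bijection with $\op{Stab}_{0,B}(P)$; and (iii) match containment of closures with edge contraction.

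For task (i), I would work with the balanced twisted cover $f\colon C \to D$ representing a given point of $\overline{\mathcal{H}}$. The target $D$ is a nodal genus-$0$ curve whose dual graph is the stable tree $T$ of $\pi_\text{tgt}(f)$. Restricting $f$ over each irreducible component $D_v$ of $D$ produces a branched cover of $\mathbb{P}^1$ with ramification governed by $P$, while the balancing condition at each node of $D$ pins down how the sheets of $f$ over adjacent components are identified. Recording this component-by-component cover data as a decoration of the vertices and edges of $T$ produces a $P$-decorated tree; this is precisely the construction developed in Section~\ref{sec:decorated trees from multicurves} via the multicurve on the target that is pinched to form $D$.

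The crux is task (ii), which is essentially a monodromy computation. As a point varies within a fixed stratum the underlying tree $T$ is unchanged, so the only freedom is in the labelling of the sheets of the cover; the connected components of $\pi_\text{tgt}^{-1}(\sigma)$ are exactly the orbits of the monodromy action on these labellings as one loops around within $\sigma$. I would show that Hurwitz equivalence of $P$-decorated trees is defined so as to record precisely this orbit — two decorated trees over the same $T$ are Hurwitz equivalent if and only if their cover data differ by a relabelling realised by monodromy — so that Hurwitz equivalence classes are in bijection with connected components of $\pi_\text{tgt}^{-1}(\sigma)$. Local constancy of the class gives well-definedness on strata, injectivity follows because distinct classes sit in distinct components, and surjectivity follows by realising any prescribed decoration as an admissible cover assembled from covers over the components and glued along the balancing data. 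The main obstacle is the bookkeeping in this monodromy analysis: verifying that the relabellings generated by monodromy match exactly the moves built into Hurwitz equivalence, with no orbits spuriously split or merged.

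Finally, for task (iii), containment of closures in $\overline{\mathcal{H}}$ projects under the finite map $\pi_\text{tgt}$ to containment of closures in $\overline{\mathcal{M}}_{0,B}$, which is edge contraction of stable trees. It remains to lift this to the decorations: contracting an edge of $T$ corresponds to smoothing the associated node of $D$, which merges the two covers over the adjacent components into a single cover over the merged component, and I would check that this operation on cover data is exactly edge contraction of $P$-decorated trees. Verifying compatibility with Hurwitz equivalence — so that contraction descends to classes, and so that every contraction of a decorated tree is realised by an actual closure-containment of strata — completes the correspondence.
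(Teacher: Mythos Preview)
Your high-level route differs from the paper's. You work directly with balanced twisted covers and propose to compute the monodromy of the finite cover $\pi_{\text{tgt}}$ restricted over a stratum $\sigma$ of $\overline{\mathcal{M}}_{0,B}$. The paper instead passes through augmented Teichm\"uller space: by Hinich--Vaintrob, each connected component $\overline{\mathcal{H}}_f$ is the quotient $\overline{\mathcal{T}}_{0,B}/\op{LMod}_f$ by the liftable mapping class group, so that the irreducible strata of $\overline{\mathcal{H}}$ are in bijection with $\op{LMod}_f$-orbits of \emph{multicurves} on $(S^2,B)$. The decorated tree is then built not from the nodal cover itself but from an \emph{embedded} dual tree of the multicurve, decorated by monodromy permutations of certain loops under a fixed topological branched cover $f$. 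The bijection with $\op{Stab}_{0,B}(P)$ is Proposition~\ref{prop:Dbar is a bijection}, and the containment statement comes from containment of multicurves.

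Your task (ii) is where the entire theorem lives, and the proposal does not supply a mechanism for it. Saying that Hurwitz equivalence ``is defined so as to record precisely this orbit'' is the statement to be proved, not a proof; you yourself flag the obstacle but offer no tool. Concretely, you would need generators for $\pi_1(\sigma)$ (a product of genus-zero pure mapping class groups), a computation of how each generator acts on the component-by-component cover data, and a proof that the moves so generated coincide exactly with global conjugation plus braid moves of decorated trees --- with nothing missing and nothing extra. The paper's substitute for this computation is the theory of embedded stable trees: two embedded dual trees with isotopic dual multicurve are connected by a sequence of braid moves (Proposition~\ref{prop:dual trees connected by simple moves}), and braid moves of embedded trees induce braid moves of decorated trees (Lemma~\ref{lemma:braid move of trees gives braid move of decorations}). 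This is the engine that makes the ``monodromy matches Hurwitz equivalence'' step go through (packaged as Proposition~\ref{prop:main prop}), and your proposal has no analogue of it. Surjectivity, which you address by ``assembling an admissible cover'', is handled in the paper by the polygon-gluing construction of Proposition~\ref{prop:realising decorations}; your sketch is in the right spirit there. Task (iii) is essentially as you describe and corresponds to Lemma~\ref{lemma:Dfbar behaves well with containment} and Lemma~\ref{lemma:contraction of Heq classes is well defined}.
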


A $P$-decorated tree consists of an abstract tree together with a cyclic ordering of the half-edges around each vertex and an assignment of a permutation to each half-edge (with some cycles of the permutations possibly labelled). This object keeps track of the monodromy information associated to a holomorphic map of Riemann surfaces as marked points collide.

It is straightforward to convert the theorem to an algorithm that computes the stratification of $\overline{\mathcal{H}}$ -- code is available on request, and will soon be available on the author's website.

\begin{remark}
Our $P$-decorated trees are a refinement of the \textit{combinatorial admissible covers} defined by Cavalieri, Markwig and Ranganathan \cite{cavalieri2016tropicalizing}. In their paper, they assign to each point of $\overline{\mathcal{H}}$ a combinatorial admissible cover. Points that share a given combinatorial admissible cover form a union of strata of the same dimension -- but not, in general, an irreducible stratum. That is, points may have the same combinatorial admissible cover but live in different strata, or even in different connected components of $\overline{\mathcal{H}}$ (see Example \ref{ex:introduction example}). We relate $P$-decorated trees to combinatorial admissible covers in Section \ref{subsec:relation to combinatorial admissible covers}.
\end{remark}

\begin{example}\label{ex:introduction example}
\begin{sloppypar}
We illustrate Theorem \ref{thm:main theorem} now with an example. Let ${F: \mathbb{P}^1 \rightarrow \mathbb{P}^1}$ be a rational map of degree 4 that has three critical points of local degree 2 and one critical point of local degree 4, mapping to distinct points $b_1, b_2, b_3, b_4$, respectively. The preimages of $b_1$, $b_2$, and $b_3$ also contain two non-critical points each. Let $a_1, a_2, a_3$ be non-critical preimages of $b_1, b_2, b_3$, respectively. This is a description in words of the following portrait $P$:
\[
\begin{tikzcd}[column sep=small, row sep=12mm, column sep=2.7mm, ampersand replacement=\&]
\critpt\arrow[dr,"\scriptstyle 2" left, line width=0.5mm] 
\&
\overset{\displaystyle a_1}{\stdpt}\arrow[d,"\scriptstyle 1" right, line width=0.5mm]
\&
\stdpt\arrow[dl,"\scriptstyle 1" right, line width=0.5mm]
\& \& ~\&
\critpt\arrow[dr, "\scriptstyle 2" left, line width=0.5mm] 
\& 
\overset{\displaystyle a_2}{\stdpt}\arrow[d,"\scriptstyle 1" right, line width=0.5mm]
\&
\stdpt\arrow[dl,"\scriptstyle 1" right, line width=0.5mm]
\& \& ~\&
\critpt\arrow[dr,"\scriptstyle 2" left, line width=0.5mm]
\& 
\overset{\displaystyle a_3}{\stdpt}\arrow[d,"\scriptstyle 1" right, line width=0.5mm]
\&
\stdpt\arrow[dl,"\scriptstyle 1" right, line width=0.5mm]
\& \& ~\&
\& 
\critpt\arrow[d,"\scriptstyle 4" left, line width=0.5mm]
\&
\\
\&
\underset{\displaystyle b_1}{\stdpt} 
\& \& \& \& \&
\underset{\displaystyle b_2}{\stdpt} 
\& \& \& \& \&
\underset{\displaystyle b_3}{\stdpt} 
\& \& \& \& \&
\underset{\displaystyle b_4}{\stdpt} 
\&
\end{tikzcd}
\]

There is a Hurwitz space, which we denote by $\mathcal{H}$, that parametrises the collection of tuples ${(F: \mathbb{P}^1 \rightarrow \mathbb{P}^1, b_1, b_2, b_3, b_4, a_1, a_2, a_3)}$ fitting the portrait $P$, up to pre- and post-composition with M\"obius transformations. $\mathcal{H}$ has two connected components; each is isomorphic to a Riemann sphere with 18 punctures. Compactifying fills in these punctures, and so $\overline{\mathcal{H}}$ has 38 irreducible strata: the 2 open connected components of $\mathcal{H}$ and the 36 punctures.

Two $P$-decorated trees that represent distinct punctures of $\mathcal{H}$ are shown in Figure \ref{fig:introduction Hurwitz space example d=4}. These punctures have the same combinatorial admissible cover in the sense of \cite{cavalieri2016tropicalizing} (see Section \ref{subsec:relation to combinatorial admissible covers}), but in fact belong to different connected components of $\overline{\mathcal{H}}$. 
\end{sloppypar}

\begin{figure}[ht!]
\centering
\includegraphics[scale=1]{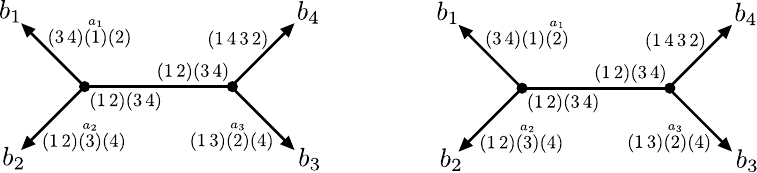}
\caption{Two decorated trees that represent distinct strata of $\overline{\mathcal{H}}$ in Example \ref{ex:introduction example}. The only difference between the decorations is the position of the label $a_1$.}
\label{fig:introduction Hurwitz space example d=4}
\end{figure}

\begin{sloppypar}
Roughly speaking, a puncture parametrises a `degenerate' map obtained by pinching a simple loop $\Delta$ on $\mathbb{P}^1 \smallsetminus \{b_1, b_2, b_3, b_4\}$ and its preimage $F^{-1}(\Delta)$ on ${\mathbb{P}^1 \smallsetminus \{a_1, a_2, a_3\}}$. To construct a $P$-decorated tree that represents the puncture, we take an embedded dual tree of $\Delta$ (see Section \ref{subsubsec:dual multicurves}) and decorate every half-edge with a permutation in $S_4$. Each permutation captures the monodromy of a loop in a system of loops associated to $\Delta$. In addition, certain cycles of the permutations receive a label. (More precisely, there is an injection from the set $\{a_1, a_2, a_3\}$ to the disjoint union of cycles of the permutations.)

Definition \ref{cons:decoration of stable tree} contains the details of this construction. There are choices to be made in the construction, but different choices will result in a $P$-decorated tree that is Hurwitz equivalent (Lemma \ref{lemma:different embedded dual trees give equivalent decorations}).
\end{sloppypar}
\end{example}

\begin{subsection}{Proof strategy} 
Let $\mathcal{H}$ be a Hurwitz space parametrising maps to $\mathbb{P}^1$ and let $P$ be its portrait. Koch \cite[Section 2]{koch2013} shows that every connected component of $\mathcal{H}$ is the quotient of a certain Teichm\"uller space $\mathcal{T}$ by the action of a \textit{liftable mapping class group} (see Section \ref{subsubsec:marked holomorphic maps from branched covers}). By Hinich and Vaintrob \cite{hinich2010augmented}, each action extends to the \textit{augmented} Teichm\"uller space $\overline{\mathcal{T}}$ (introduced in \cite{bers1974spaces}) with quotient isomorphic to a connected component of $\overline{\mathcal{H}}$.

Augmented Teichm\"uller space decomposes into locally closed, connected strata that are indexed by multicurves on a marked sphere, and the quotient map sends a stratum of $\overline{\mathcal{T}}$ to a stratum of $\overline{\mathcal{H}}$ (Proposition \ref{prop:HV10 statement}). Thus, the irreducible strata of the connected component of $\overline{\mathcal{H}}$ are indexed by orbits of multicurves under the group action. These orbits are captured by Hurwitz equivalence classes of $P$-decorated trees -- this is the content of Proposition \ref{prop:Dbar is a bijection}. The proof relies on the theory of \textit{embedded stable trees} and $P$-decorated trees developed in Sections \ref{section:embedded stable trees}, \ref{sec:decorated trees} and \ref{section:decorations from embedded stable trees}. A key step is Proposition \ref{prop:dual trees connected by simple moves}, which tells us that two embedded stable trees that are dual to a fixed multicurve are connected by a sequence of operations called \textit{braid moves}\footnote{This is reminiscent of the result in \cite{RTP} that the \textit{complex of trees} is connected, although our basic objects are different.}.

Containment of closures of strata in augmented Teichm\"uller space is given by the reverse containment of multicurves, which in turn is given by edge contraction of their dual trees, and so the containment part of Theorem \ref{thm:main theorem} follows straightforwardly. 
\end{subsection}

\begin{subsection}{Relation to tropical geometry} 
Suppose $X$ is a subvariety of the algebraic torus $(K^*)^n$, for $K$ a valued field. Classical tropical geometry explains how to associate to $X$ a piecewise affine subspace ${\op{trop}(X) \subset \mathbb{R}^n}$ -- the \textit{tropicalisation} of $X$ -- using the coordinate-wise valuation map. The tropicalisation is a `combinatorial shadow' of $X$: certain features of $X$ (such as its dimension) can be read from $\op{trop}(X)$ \cite{maclagan2015}.

There are several related constructions associating a real polyhedral space to an algebro-geometric object that we may also interpret as tropicalisations, as outlined by Abramovich, Caporaso and Payne \cite[Section 1.3]{abramovich2015tropicalization}. In increasing generality: a toric variety $X$ yields an embedded cone complex $\Sigma(X)$ called its \textit{fan}, a toroidal embedding of schemes $U \subset X$ without self-intersections yields a cone complex $\Sigma(X)$ \cite{kempf1973toroidal}, and a toroidal embedding of schemes $U \subset X$ with self-intersections yields a generalised\footnote{\textit{Generalised} means we may quotient cones by automorphisms.} cone complex $\Sigma(X)$ \cite{thuillier2007geometrie}.

The last case generalises straightforwardly to the case of a toroidal embedding $\mathcal{U} \subset \mathcal{X}$ of Deligne-Mumford stacks, also yielding a generalised cone complex $\Sigma(\mathcal{X})$ \cite{abramovich2015tropicalization}. It is natural to consider the associated extended\footnote{\textit{Extended} means cone coordinates are allowed to equal $\infty$.} generalised cone complex $\overline{\Sigma}(\mathcal{X})$, called the \textit{skeleton} of $\mathcal{X}$ -- it lives naturally inside the \textit{Berkovich analytification} of the coarse space of $\mathcal{X}$ as the image of a canonical retraction map.

The skeleton $\overline{\Sigma}(\mathcal{X})$ decomposes into extended generalised cones according to the natural stratification of $\mathcal{X}$ \cite[Proposition 6.2.6]{abramovich2015tropicalization}. For instance, the inclusion $\mathcal{M}_{g,n} \subset \overline{\mathcal{M}}_{g,n}$ is a toroidal embedding of Deligne-Mumford stacks, and the decomposition result yields a description of $\overline{\Sigma}(\overline{\mathcal{M}}_{g,n})$ in terms of $n$-marked genus-$g$ stable graphs \cite[Theorem 1.2.1]{abramovich2015tropicalization}.

The inclusion $\mathcal{H} \subset \overline{\mathcal{H}}$ is also a toroidal embedding of Deligne-Mumford stacks. Combining Theorem \ref{thm:main theorem} with the decomposition result and analysis carried out by Cavalieri, Markwig and Ranganathan for the case of $\overline{\mathcal{H}}$ \cite[Section 4]{cavalieri2016tropicalizing}, we provide an analogous description of the skeleton $\overline{\Sigma}(\overline{\mathcal{H}})$ in terms of $P$-decorated trees.

More precisely, suppose $\mathcal{H}$ is defined by the portrait $P$. A Hurwitz-equivalence class of $P$-decorated trees with $k$ edges defines an extended cone $(\mathbb{R}_{\geq 0} \cup \{\infty\})^k$. By gluing these extended cones together as prescribed by edge contraction of $P$-decorated trees, we obtain an extended cone complex $\overline{\mathcal{H}}^\text{trop}$ that is isomorphic to $\overline{\Sigma}(\overline{\mathcal{H}})$. Section \ref{subsec:Htrop definition} gives a detailed explanation.

\begin{corollary}\label{cor:skeleton theorem}
Let $\mathcal{H}$ be a Hurwitz space parametrising maps to $\mathbb{P}^1$. The extended cone complex $\overline{\mathcal{H}}^\text{trop}$ defined in Section \ref{subsec:Htrop definition} and the skeleton of the Berkovich analytification $\overline{\Sigma}(\overline{\mathcal{H}})$ are isomorphic, as extended cone complexes with integral structure.
\end{corollary}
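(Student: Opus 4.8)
The plan is to derive the statement by feeding the combinatorial description of the strata of $\overline{\mathcal{H}}$ from Theorem~\ref{thm:main theorem} into the general decomposition theorem for skeleta of toroidal Deligne--Mumford stacks. Since $\mathcal{H}\subset\overline{\mathcal{H}}$ is a toroidal embedding of Deligne--Mumford stacks, Proposition~6.2.6 of \cite{abramovich2015tropicalization} realises the skeleton $\overline{\Sigma}(\overline{\mathcal{H}})$ as an extended generalised cone complex glued from extended cones indexed by the strata of $\overline{\mathcal{H}}$: to a stratum $S$ of codimension $k$ it attaches an extended cone $\overline{\sigma}_S$ of dimension $k$, namely a quotient of $(\mathbb{R}_{\geq 0}\cup\{\infty\})^k$ by the monodromy group of $S$; the integral structure of $\overline{\sigma}_S$ is the lattice dual to the free monoid of effective boundary Cartier divisors along $S$; and the cones are glued along faces according to the closure poset of the stratification. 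The strategy is to show that this output agrees, cone by cone and face by face, with the combinatorial gluing defining $\overline{\mathcal{H}}^\text{trop}$ in Section~\ref{subsec:Htrop definition}.

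First I would match the index sets, cone dimensions, and face relations. By Theorem~\ref{thm:main theorem} the strata of $\overline{\mathcal{H}}$ biject with $\op{Stab}_{0,B}(P)$, and I would check that the stratum indexed by a $P$-decorated tree $T$ has codimension $|E(T)|$: since the forgetful map $\pi_\text{tgt}$ is finite and preserves codimension, the codimension equals that of the image stratum in $\overline{\mathcal{M}}_{0,B}$, which is the number of edges of the dual tree of the target multicurve, i.e.\ $|E(T)|$. This yields a cone $\overline{\sigma}_T$ of dimension $|E(T)|$, matching the extended cone $(\mathbb{R}_{\geq 0}\cup\{\infty\})^{|E(T)|}$ assigned to $T$ in $\overline{\mathcal{H}}^\text{trop}$. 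For the face maps, a face inclusion $\overline{\sigma}_{S'}\hookrightarrow\overline{\sigma}_S$ in $\overline{\Sigma}(\overline{\mathcal{H}})$ records that $S$ lies in the closure of $S'$, and by Theorem~\ref{thm:main theorem} this containment is exactly an edge contraction $T\rightsquigarrow T'$ — precisely the rule by which the cones of $\overline{\mathcal{H}}^\text{trop}$ are glued. Hence the two posets of cones agree, as do the face maps sending an edge-length coordinate to zero.

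The remaining and most delicate step is to verify that the integral structures and the automorphism quotients coincide. Here I would invoke the local analysis of $\overline{\mathcal{H}}$ in Section~4 of \cite{cavalieri2016tropicalizing}, which gives a toric local model at a boundary point and identifies the smoothing parameter associated with each node. The subtlety is that $\overline{\mathcal{H}}$ parametrises balanced twisted covers: over a downstairs node with smoothing parameter $t$, each upstairs node ramified of order $e$ has its source smoothing parameter $s$ satisfying $t=s^e$, so that $\operatorname{val}(s)=\operatorname{val}(t)/e$ and the natural generator of the edge lattice is a rescaling of the downstairs generator by a factor determined by the ramification indices — equivalently, by the cycle structure of the permutation decorating that edge of $T$. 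I expect this to be the main obstacle: one must show that the per-edge lattice prescribed in the construction of $\overline{\mathcal{H}}^\text{trop}$, read off from the cycle lengths of the decorating permutations, is exactly the dual lattice of boundary Cartier divisors produced by the Abramovich--Caporaso--Payne machine, and that the monodromy group acting on $\overline{\sigma}_S$ is the group $\op{Aut}(T)$ acting by permuting edges, so that the generalised-cone quotients match on both sides. Once the cones, their integral structures, the automorphism quotients, and the face maps all agree, the tautological identification of cones assembles into the required isomorphism of extended generalised cone complexes with integral structure.
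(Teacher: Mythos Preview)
Your overall strategy---feed Theorem~\ref{thm:main theorem} into Proposition~6.2.6 of \cite{abramovich2015tropicalization} and invoke Section~4 of \cite{cavalieri2016tropicalizing} for the local toric model---is exactly the paper's approach, and your treatment of index sets, cone dimensions, face relations, and integral lattices is fine.

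The gap is in your handling of the monodromy groups. You assert that the monodromy group of the stratum indexed by the decorated tree $T$ is $\op{Aut}(T)$, and that ``the generalised-cone quotients match on both sides.'' But look again at the definition of $\overline{\mathcal{H}}^\text{trop}$ in Section~\ref{subsec:Htrop definition}: the cones $\overline{\sigma}_\theta = (\mathbb{R}_{\geq 0}\cup\{\infty\})^{E(\mathscr{T})}$ carry \emph{no} quotient by any automorphism group. So to get an isomorphism with the skeleton, you must show that the monodromy group acts \emph{trivially} on each cone, not merely that the quotients agree. Moreover, what \cite{cavalieri2016tropicalizing} actually identifies as the monodromy group is $\op{Aut}(\Theta)$, the automorphism group of the induced combinatorial admissible cover $\Theta:\mathscr{G}\to\mathscr{T}$, not $\op{Aut}(T)$; an element of $\op{Aut}(\Theta)$ can act nontrivially on the source graph $\mathscr{G}$.

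The missing observation, which the paper makes explicitly, is this: the cone $\overline{\sigma}_\theta$ has coordinates indexed by edges of the \emph{target} tree $\mathscr{T}$, and any automorphism of $\Theta$ restricts to an automorphism of the $B$-marked stable tree $\mathscr{T}$. Because every leg of $\mathscr{T}$ carries a distinct label from $B$, the tree $\mathscr{T}$ has no nontrivial automorphisms, so $\op{Aut}(\Theta)$ acts trivially on $E(\mathscr{T})$ and hence on $\overline{\sigma}_\theta$. This is the step that upgrades the skeleton from a \emph{generalised} extended cone complex to an ordinary one and makes the comparison with $\overline{\mathcal{H}}^\text{trop}$ go through.
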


When $\mathcal{H}$ parametrises maps to $\mathbb{P}^1$, the boundary $\overline{\mathcal{H}} \smallsetminus \mathcal{H}$ is actually a \textit{simple} normal crossings divisor, and as a result the skeleton is an extended cone complex rather than a \textit{generalised} extended cone complex.

This is a refinement of the tropicalisation given by Cavalieri, Markwig and Ranganathan for Hurwitz spaces parametrising maps to $\mathbb{P}^1$. They define an extended cone complex $\overline{\mathcal{H}}^\text{trop}_\text{CMR}$ together with a finite-to-one surjective morphism $\overline{\Sigma}(\overline{\mathcal{H}}) \rightarrow \overline{\mathcal{H}}^\text{trop}_\text{CMR}$. The failure of this morphism to be an isomorphism comes down to the fact that combinatorial admissible covers do not exactly capture the irreducible strata of $\overline{\mathcal{H}}$. We explicitly relate the two extended cone complexes $\overline{\mathcal{H}}^\text{trop} \cong \overline{\Sigma}(\overline{\mathcal{H}})$ and $\overline{\mathcal{H}}^\text{trop}_\text{CMR}$ in Section \ref{subsec:relation to CMR}.
\end{subsection}

\begin{subsection}{Application to complex dynamics}\label{subsec:application to complex dynamics}
\begin{sloppypar}

Hurwitz spaces have recently been used to study the dynamics of rational maps. If $F: \mathbb{P}^1 \rightarrow \mathbb{P}^1$ is a rational map, then its \textit{postcritical set} is the set $P_F = \bigcup_{n \geq 1} F^{\circ n}(C_F)$ of iterates of the set $C_F$ of critical (ramification) points. We say that $F$ is \textit{postcritically finite} if $P_F$ is finite. Postcritically finite rational maps are central objects of study in complex dynamics.

Suppose now that ${f:S^2 \rightarrow S^2}$ is an orientation-preserving branched cover from the sphere to itself whose postcritical set $P_f$ is finite. Such a map is called a \textit{Thurston map}. Thurston introduced these maps as topological models of postcritically finite rational maps.

The foundational result in the theory of Thurston maps is Thurston's Characterisation, which states that either a Thurston map $f$ can in a suitable sense be `promoted' to a postcritically finite rational map with the same dynamics on the postcritical set, or else $f$ has a \textit{Thurston obstruction}\footnote{This statement of Thurston's Characterisation is valid as long as $f$ has \textit{hyperbolic orbifold}. The exceptional Thurston maps with non-hyperbolic orbifold are well understood \cite[Section 9]{DH93}.}\cite{DH93}. A Thurston obstruction is a multicurve in $S^2 \smallsetminus P_f$ that lifts under $f$ to an isotopic multicurve and satisfies a certain eigenvalue condition.

\begin{sloppypar}
Hurwitz spaces arise naturally in this theory: $f$ picks out a connected component $\mathcal{H}_f$ of some Hurwitz space\footnote{The connected component $\mathcal{H}_f$ is written $\mathcal{W}_f$ in \cite{koch2013}.} (see Section \ref{subsubsec:marked holomorphic maps from branched covers}). Supposing $|P_f| = n$, the natural source and target maps ${\pi_\text{src}, \pi_\text{tgt}: \mathcal{H}_f \rightarrow \mathcal{M}_{0,n}}$ induce a multi-valued \textit{algebraic correspondence} on moduli space ${\pi_\text{src} \circ (\pi_\text{tgt})^{-1}: \mathcal{M}_{0,n} \rightrightarrows \mathcal{M}_{0,n}}$ which contains information about $f$ -- for example, if $f$ has no Thurston obstruction, then the correspondence has a fixed point \cite{koch2013}.
\end{sloppypar}

\begin{sloppypar}
Theorem \ref{thm:main theorem} has applications to the algebraic correspondence. First, note that because the source and target maps extend to ${\pi_\text{src}, \pi_\text{tgt}: \overline{\mathcal{H}}_f \rightarrow \overline{\mathcal{M}}_{0,n}}$, the correspondence extends to $\overline{\mathcal{M}}_{0,n}$. Using the $P$-decorated trees of $\overline{\mathcal{H}}_f$, we can compute the action of the correspondence on the strata of $\overline{\mathcal{M}}_{0,n}$. This is explained in Section \ref{subsec:action of piA and piB on strata}. There are immediate uses. For one, a Thurston obstruction for $f$ determines a fixed boundary stratum of $\overline{\mathcal{M}}_{0,n}$ under the correspondence -- so if no boundary stratum of $\overline{\mathcal{M}}_{0,n}$ is fixed, then $f$ cannot have a Thurston obstruction. In certain special cases, this reduces an infinite search for obstructions (of all multicurves in $S^2 \smallsetminus P_f$) to a finite computation. In addition, certain dynamical invariants of the correspondence can be computed using the boundary stratification of $\overline{\mathcal{H}}_f$ \cite{ramadas2020}.
\end{sloppypar}

Theorem \ref{thm:main theorem} also has applications to tropical Thurston theory. Ramadas develops a tropical perspective of the theory of Thurston maps, and in particular defines a \textit{tropical} correspondence whose dynamics captures important information about the dynamics of the algebraic correspondence \cite[Theorem 1.1]{ramadas2024thurston}. One difficulty in using	 this perspective, however, is the lack of an explicit description of the cone over the boundary complex of $\overline{\mathcal{H}}$, which is the `true' tropical Hurwitz space \cite[Remark 6.2]{ramadas2024thurston}. The cone complex $\mathcal{H}^\text{trop}$ that we define in Section \ref{subsec:Htrop definition} is exactly the cone over the boundary complex, addressing this difficulty. In particular, the connected components of $\mathcal{H}^\text{trop}$ are in bijection with the connected components of $\overline{\mathcal{H}}$, which is not in general true of the cone complex $\mathcal{H}^\text{trop}_\text{CMR}$. We reformulate the relevant definitions in Section \ref{subsec:tropical thurston theory}.

\begin{definition}\label{def:tropical correspondence}
The \textit{tropical moduli space correspondence} is
$$(\pi_\text{src}^\text{trop}|_{\mathcal{H}_f^\text{trop}}) \circ (\pi_\text{tgt}^\text{trop}|_{\mathcal{H}_f^\text{trop}})^{-1}: \mathcal{M}_{0,n}^\text{trop} \rightrightarrows \mathcal{M}_{0,n}^\text{trop},$$
where $\pi_\text{tgt}^\text{trop}$ and $\pi_\text{src}^\text{trop}$ are the tropical target and source maps defined in Section \ref{subsec:tropical src and tgt maps}. 
\end{definition}
\end{sloppypar}
\end{subsection}

\begin{subsection}{Related work}
Len, Ulirsch and Zakharov expand on \cite{cavalieri2016tropicalizing} and, following their recipe, outline a tropicalisation of the Abramovich-Corti-Vistoli compactification of moduli spaces of connected $G$-covers, where $G$ is an arbitrary finite group \cite[Section 3]{len2024abelian}. In the rest of their paper, they study the case where $G$ is abelian. In this paper, as in \cite{cavalieri2016tropicalizing}, we are working with the case $G=S_d$.

Brandt, Chan and Kannan \cite{brandt2024weight} also study the Abramovich-Corti-Vistoli compactification of the moduli space of $G$-covers, in the case that $G$ is a finite abelian group. They describe the stratification of these compactifications, and the combinatorial objects that they define -- admissible $G$-covers of graphs -- are closely related to our decorated trees. One key difference is that (because $G$ is abelian) there is no need for an admissible $G$-cover of graphs to keep track of the cyclic order of half-edges around each vertex, as our decorated trees must. They use their description of the stratification to compute the weight-zero compactly supported cohomology of hyperelliptic Hurwitz spaces.
\end{subsection}

\begin{subsection}{Organisation} 
\begin{description}[leftmargin=1.5em]
\item[\S \ref{sec:background}] gives background, including Teichm\"uller space, monodromy and Hurwitz spaces. 

\item[\S \ref{section:embedded stable trees}] develops the theory of \textit{embedded stable trees} and \textit{braid moves}. 

\item[\S \ref{sec:decorated trees}] defines decorated trees and Hurwitz equivalence of decorated trees.

\item[\S \ref{section:decorations from embedded stable trees}] explains the construction that assigns a decorated tree to an embedded stable tree.

\item[\S \ref{sec:decorated trees from multicurves}] explains the construction that assigns a Hurwitz equivalence class of decorated trees to a stratum of $\overline{\mathcal{H}}$, and concludes with a proof of Theorem \ref{thm:main theorem}.

\item[\S \ref{sec:strata of H under target and source}] describes the action of the target and source maps on the strata of $\overline{\mathcal{H}}$.

\item[\S \ref{sec:Htrop}] explains the connection to tropical geometry. 

\item[\S \ref{sec:appendix}] contains a technical proof from \S \ref{section:embedded stable trees}.
\end{description}
\end{subsection}

\begin{subsection}{Conventions}
We work over $\mathbb{C}$ and study compactified Hurwitz spaces as complex orbifolds, rather than as smooth Deligne-Mumford stacks, in order to apply the results of Hinich and Vaintrob \cite{hinich2010augmented}.

In keeping with tropical geometry terminology, we allow graphs in this paper to have unpaired half-edges called \textit{legs}.

We use a non-standard convention and write concatenation of paths from right-to-left, to match function composition.
\end{subsection}

\begin{subsection}{Acknowledgements} 
I am deeply grateful to my PhD supervisor, Rohini Ramadas, for suggesting this topic and for valuable feedback and guidance throughout. I would like to thank Amy Li, Hannah Markwig and Rob Silversmith for useful conversations, and John Smillie for helpful feedback on an earlier version of this paper. The author is supported by the Warwick Mathematics Institute Centre for Doctoral Training, and gratefully acknowledges funding from the University of Warwick.
\end{subsection}
\end{section}

\newpage
\begin{section}{Background}\label{sec:background}
\begin{sloppypar}

Throughout, fix a finite subset $B$ of the sphere $S^2$ with ${|B| \geq 3}$, and a finite subset $A$ of a compact connected genus-$g$ surface $X$ with $2-2g-|A| <0$. In this section, we review the definitions of three key spaces: the Teichm\"uller space $\mathcal{T}_{0,B}$, the moduli space $\mathcal{M}_{0,B}$ and the moduli space $\mathcal{M}_{g,A}$. We recall how to `augment' these spaces, yielding the augmented Teichm\"uller space $\overline{\mathcal{T}}_{0,B}$ and the Deligne-Mumford compactifications $\overline{\mathcal{M}}_{0,B}$ and $\overline{\mathcal{M}}_{g,A}$. These spaces come with natural stratifications that are indexed by multicurves, stable trees and stable graphs, respectively; we recall the definitions of these objects.

We then discuss branched covers and monodromy, before introducing the Hurwitz space $\mathcal{H}$ and the Abramovich-Corti-Vistoli compactification $\overline{\mathcal{H}}$. All surfaces are oriented, and all maps are assumed to be orientation preserving.
\end{sloppypar}

\begin{subsection}{Teichm\"uller space and moduli space}\label{subsec:teichmuller space and moduli space}

\begin{subsubsection}{The Teichm\"uller space \texorpdfstring{$\mathcal{T}_{0,B}$}{T0B}} 
\begin{sloppypar}
For our purposes, a \textit{complex structure}\footnote{This definition is equivalent to the usual atlas-of-charts definition of complex structures.} on $S^2$ is a homeomorphism ${\varphi: S^2 \xrightarrow{\cong} \mathbb{P}^1}$ up to postcomposition with a M\"obius transformation: two homeomorphisms ${\varphi: S^2 \rightarrow \mathbb{P}^1}$ and ${\varphi': S^2 \rightarrow \mathbb{P}^1}$ are equivalent complex structures if and only if there is a M\"obius transformation $\mu$ such that ${\varphi' = \mu \circ \varphi}$.

Roughly speaking, the \textit{Teichm\"uller space} $\mathcal{T}_{0,B}$ is the collection of complex structures on $S^2$ up to isotopy $\op{rel. } B$. As a set,
$$\mathcal{T}_{0,B} = \left\{ \varphi: S^2 \xlongrightarrow{\cong} \mathbb{P}^1\right\} \big/ \sim, $$
where $\varphi \sim \varphi'$ if and only if there exists a M\"obius transformation $\mu$ such that $\varphi'$ is isotopic to $\mu \circ \varphi$ $\op{rel. } B$. The Teichm\"uller space $\mathcal{T}_{0,B}$ is a simply connected complex manifold of dimension $|B|-3$.
\end{sloppypar}
\end{subsubsection}

\begin{subsubsection}{The moduli space \texorpdfstring{$\mathcal{M}_{0,B}$}{M0B}}\label{subsubsec:moduli space}
\begin{sloppypar} 
The \textit{moduli space} $\mathcal{M}_{0,B}$ is closely related. It is the collection of injections ${\iota: B \hookrightarrow \mathbb{P}^1}$ up to postcomposition with a M\"obius transformation. That is,
$$\mathcal{M}_{0,B} = \left\{\iota: B \hookrightarrow \mathbb{P}^1\right\} / \sim,$$
where ${\iota \sim \iota'}$ if and only if there is a M\"obius transformation ${\mu: \mathbb{P}^1 \rightarrow \mathbb{P}^1}$ such that ${\iota' = \mu \circ \iota}$. We call these injections \textit{markings}. The moduli space is a smooth quasiprojective variety of dimension $|B|-3$.
\end{sloppypar}

There is a straightforward map from $\mathcal{T}_{0,B}$ to $\mathcal{M}_{0,B}$ given by restricting complex structures to the set $B$:
\begin{align*}
\pi: \mathcal{T}_{0,B} &\longrightarrow \mathcal{M}_{0,B}\\
\left[\varphi\right] &\longmapsto \left[\varphi|_B\right].
\end{align*}
This map is a holomorphic covering map and realises Teichm\"uller space as the universal cover of moduli space.
\end{subsubsection}

\begin{sloppypar}
The group of deck transformations of $\pi$ can be identified with the \textit{pure mapping class group} $\op{PMod}_{0,B}$. This is the group of self-homeomorphisms of the sphere that fix $B$ pointwise, up to isotopy $\op{rel. } B$: 
$$\op{PMod}_{0,B} = \left\{h: S^2 \xlongrightarrow{\cong} S^2 \mid h(b) = b \text{ for all } b \in B\right\} / \sim$$
where ${h \sim h'}$ if and only if $h'$ is isotopic to $h$ $\op{rel. } B$. The pure mapping class group acts on Teichm\"uller space by precomposition: if ${\left[\varphi: S^2 \rightarrow \mathbb{P}^1\right]} \in \mathcal{T}_{0,B}$ and ${\left[h:S^2 \rightarrow S^2\right]} \in \op{PMod}_{0,B}$, then $\left[h\right] \cdot \left[\varphi\right] = \left[\varphi \circ h^{-1} \right].$
\end{sloppypar}

\begin{subsubsection}{The moduli space \texorpdfstring{$\mathcal{M}_{g,A}$}{MgA}}
\begin{sloppypar}
Although our focus in this paper is on the moduli space $\mathcal{M}_{0,B}$, we will need some familiarity with the more general moduli space $\mathcal{M}_{g,A}$ -- parametrising markings of genus-$g$ Riemann surfaces -- in preparation for Section \ref{sec:strata of H under target and source}. As a set,
$$\mathcal{M}_{g,A} = \left\{\iota: A \hookrightarrow \mathcal{X} \mid \mathcal{X} \text{ a genus-$g$ Riemann surface}\right\} \big/ \sim,$$
where $\iota: A \hookrightarrow \mathcal{X} \sim \iota': A \hookrightarrow \mathcal{X}'$ if and only if there is an isomorphism $\mu: \mathcal{X} \rightarrow \mathcal{X}'$ so that $\iota' = \mu \circ \iota$. The moduli space $\mathcal{M}_{g,A}$ is a smooth quasiprojective variety of dimension ${3g - 3 + |A|}$. 
\end{sloppypar}
\end{subsubsection}

\end{subsection}

\begin{subsection}{Multicurves, stable trees and stable graphs}\label{subsec:multicurves and stable trees}
We will soon `augment' $\mathcal{T}_{0,B}$ by adding in degenerate complex structures, and compactify $\mathcal{M}_{0,B}$ and $\mathcal{M}_{g,A}$ by adding in degenerate markings. The resulting spaces come with natural stratifications indexed by \textit{multicurves}, \textit{stable trees} and \textit{stable graphs}, respectively, which we now define.

\begin{subsubsection}{Multicurves} 
\begin{sloppypar} 
A simple closed curve $\gamma$ in $S^2 \smallsetminus B$ is \textit{essential} if each component of $S^2 \smallsetminus \gamma$ contains at least two points of $B$. A \textit{multicurve} in $(S^2,B)$ is a collection ${\Delta = \{\gamma_1, \ldots, \gamma_k\}}$ of pairwise disjoint, pairwise non-homotopic, essential simple closed curves in $S^2 \smallsetminus B$. 
\end{sloppypar}

We often consider multicurves up to isotopy $\op{rel. } B$: two multicurves $\Delta$ and $\Delta'$ are \textit{isotopic} if there is a bijection $\Delta \rightarrow \Delta'$ that takes each curve in $\Delta$ to an isotopic curve in $\Delta'$. We often suppress whether we are talking about a multicurve or its isotopy class, writing for instance $\Delta \subset \Delta'$ if there is an injection $\Delta \hookrightarrow \Delta'$ taking each curve in $\Delta$ to an isotopic curve in $\Delta'$. We write $\op{Mult}_{0,B}$ for the set of multicurves in $(S^2,B)$ up to isotopy.
\end{subsubsection}

\begin{subsubsection}{Stable trees}\label{subsubsec:stable trees}
In keeping with tropical geometry terminology, we allow graphs in this paper to have unpaired half-edges called \textit{legs}. A \textit{$B$-marked tree} is a connected tree $\mathscr{T}$ with exactly $|B|$ legs, together with a bijection from $B$ to the legs (referred to as the \textit{labelling}). In diagrams, we draw legs as arrows. A $B$-marked tree $\mathscr{T}$ is \textit{stable} if every vertex has valence at least three. We write $\op{Stab}_{0,B}$ for the set of isomorphism classes of $B$-marked stable trees.

\begin{figure}[ht]
\centering
\includegraphics[scale=1]{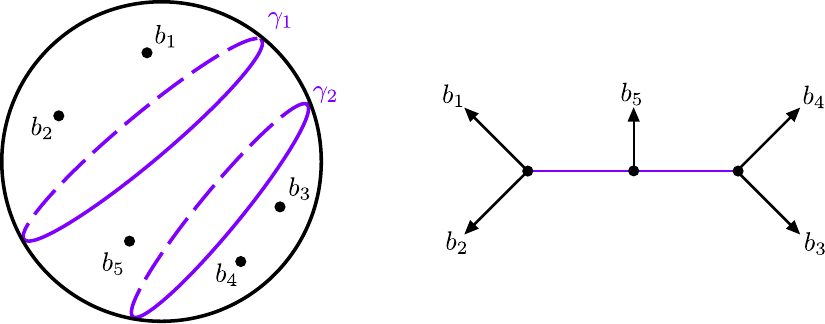}
\caption{A multicurve $\Delta = \{\gamma_1, \gamma_2\}$ in $(S^2,B)$ (left) and its dual tree $\mathscr{T}_\Delta$ (right).}
\label{fig:embedded dual tree example}
\end{figure}

Let $\Delta$ be a multicurve in $(S^2,B)$. Its \textit{dual tree} is a $B$-marked tree $\mathscr{T}_\Delta$ with one vertex for each component of $S^2 \smallsetminus \Delta$, an edge connecting two vertices if the closures of their components intersect, and a leg labelled $b$ incident with a vertex if the point $b$ is in its component. Figure \ref{fig:embedded dual tree example} gives an example. Because the curves in $\Delta$ are essential, $\mathscr{T}_\Delta$ is a stable $B$-marked tree. If $\Delta' \subset \Delta$, then $\mathscr{T}_{\Delta'}$ is obtained by contracting edges of $\mathscr{T}_\Delta$ -- specifically, the edges that correspond to the curves in $\Delta \smallsetminus \Delta'$. 
\end{subsubsection}

\begin{subsubsection}{Stable graphs}\label{subsubsec:stable graphs}
Although we deal mostly with $B$-marked stable trees, we recall a more general definition now in preparation for Section \ref{sec:strata of H under target and source}. A \textit{(vertex-weighted) $A$-marked graph} is a connected graph $\mathscr{G}$ with exactly $|A|$ legs, together with an injection from $A$ to the legs, and an assignment of a nonnegative integer weight to each vertex. The \textit{genus} of $\mathscr{G}$ is the sum of the vertex weights plus the topological genus of the graph. The graph $\mathscr{G}$ is \textit{stable} if all vertices of weight zero have valence at least three. An \textit{isomorphism} of $A$-marked graphs must preserve the labelling. We write $\op{Stab}_{g,A}$ for the set of isomorphism classes of $A$-marked stable graphs of genus $g$. (In our discussion of $B$-marked stable trees, the vertex weights are all implicitly set to zero.)

Given a non-stable $A$-marked genus-$g$ graph $\mathscr{G}$, there exists a canonical stable $A$-marked \mbox{genus-$g$} graph $\mathscr{G}'$ called the \textit{stabilisation}: weight-zero vertices of $\mathscr{G}$ with valence one are contracted into their neighbouring vertex, and weight-zero vertices with valence two are `absorbed' into an edge or leg. In this way, a vertex of $\mathscr{G}'$ can be identified with a vertex of $\mathscr{G}$, and a half-edge of $\mathscr{G}'$ can be identified with a \textit{path} of half-edges of $\mathscr{G}$, as in Figure \ref{fig:graph stabilisation example}.

\begin{figure}[ht]
\centering
\includegraphics[width=1\linewidth]{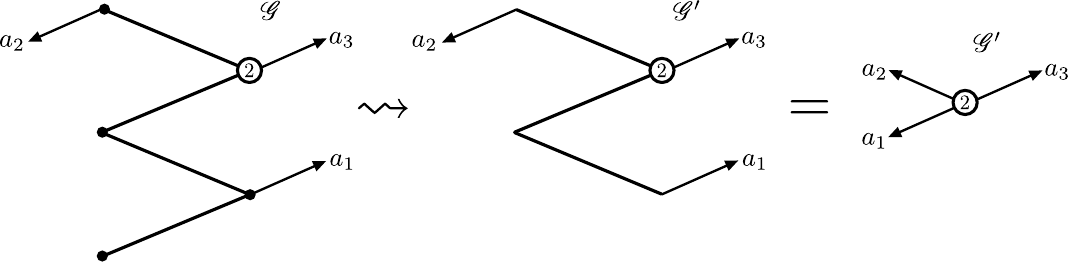}
\caption{Let $A = \{a_1,a_2,a_3\}$ and $g=2$. The non-stable $A$-marked \mbox{genus-$g$} graph $\mathscr{G}$ stabilises to the stable $A$-marked genus-$g$ graph $\mathscr{G}'$. Unlabelled vertices have weight zero.}
\label{fig:graph stabilisation example}
\end{figure}

\end{subsubsection}
\end{subsection}

\begin{subsection}{Augmented Teichm\"uller space and the Deligne-Mumford compactification}\label{subsec:augteich and DM compactification}
\begin{subsubsection}{Augmented Teichm\"uller space \texorpdfstring{$\overline{\mathcal{T}}_{0,B}$}{{\=T}0B}}\label{subsubsec:augteich}
Bers introduced the augmented Teichm\"uller space $\overline{\mathcal{T}}_{0,B}$, which is obtained by adding points to Teichm\"uller space that correspond to degenerate complex structures in which some multicurve is collapsed to a point \cite{bers1974spaces}.

\begin{sloppypar}
A \textit{nodal complex structure} on $S^2$ is a continuous `pinching' map $\varphi: S^2 \rightarrow \mathcal{X}$ to a compact connected nodal Riemann surface of arithmetic genus 0, where the preimage of the set of nodes of $\mathcal{X}$ is a multicurve $\Delta$ in $(S^2,B)$, and the restriction of $\varphi$ to $S^2 \smallsetminus \Delta$ is a homeomorphism onto the non-nodal points of $\mathcal{X}$. The \textit{augmented Teichm\"uller space} is the collection of nodal complex structures up to isotopy $\op{rel. } B$:
$$\overline{\mathcal{T}}_{0,B} = \left\{\varphi: S^2 \rightarrow \mathcal{X} \text{ a nodal complex structure} \right\} \big/ \sim$$
where ${\varphi: S^2 \rightarrow \mathcal{X}} \sim {\varphi': S^2 \rightarrow \mathcal{X}'}$ if and only if there exists an isomorphism ${\mu: \mathcal{X} \rightarrow \mathcal{X}'}$ such that $\varphi'$ and $\mu \circ \varphi$ are isotopic rel. $B$. The augmented Teichm\"uller space is not a manifold, and is not compact or even locally compact.
\end{sloppypar}

\begin{sloppypar}
$\overline{\mathcal{T}}_{0,B}$ decomposes into locally closed, connected strata that are indexed by $\op{Mult}_{0,B}$: the point ${\left[\varphi:S^2 \rightarrow \mathcal{X}\right]}$ is in the stratum (indexed by) $\Delta$ if $\varphi$ collapses precisely a multicurve isotopic to $\Delta$. Containment of strata is given by reverse containment of multicurves: the stratum $\Delta$ is contained in the closure of the stratum $\Delta'$ if and only if $\Delta' \subset \Delta$. Observe that the stratum $\Delta = \emptyset$ is the non-augmented Teichm\"uller space. 
\end{sloppypar}
\end{subsubsection}

\begin{subsubsection}{The Deligne-Mumford compactification \texorpdfstring{$\overline{\mathcal{M}}_{0,B}$}{{\=M}0B}}
Similarly, there is a compactification $\overline{\mathcal{M}}_{0,B}$ of moduli space obtained by adding in degenerate markings.

\begin{sloppypar}
A \textit{nodal marking} is an injection ${\iota: B \hookrightarrow \mathcal{X}}$ to a compact connected nodal Riemann surface of arithmetic genus 0, such that $\iota(B)$ does not intersect the set of nodes of $\mathcal{X}$. The \textit{dual tree} of $\iota$ is a $B$-marked tree with one vertex for each irreducible component of $\mathcal{X}$, an edge connecting vertices if their components intersect at a node, and a leg labelled $b \in B$ incident with a vertex if the point $\iota(b)$ is in its component. The nodal marking $\iota$ is \textit{stable} if its dual tree is stable.

The \textit{Deligne-Mumford compactification} $\overline{\mathcal{M}}_{0,B}$ is the collection of stable nodal markings up to isomorphism:
$$\overline{\mathcal{M}}_{0,B} = \left\{ \iota: B \hookrightarrow \mathcal{X} \text{ a stable nodal marking } \right\} \big/ \sim$$
where ${\iota: B \hookrightarrow \mathcal{X}} \sim {\iota': B \hookrightarrow \mathcal{X}'}$ if and only if there exists an isomorphism ${\mu: \mathcal{X} \rightarrow \mathcal{X}'}$ such that ${\iota' = \mu \circ \iota}$. The compactification is a smooth projective variety. 
\end{sloppypar}

\begin{sloppypar}
$\overline{\mathcal{M}}_{0,B}$ decomposes into locally closed, irreducible strata that are indexed by $\op{Stab}_{0,B}$: the point ${[\iota: B \hookrightarrow \mathcal{X}]}$ is in the stratum (indexed by) $\mathscr{T}$ if its dual tree is isomorphic to $\mathscr{T}$. Containment of strata is given by edge contraction of $B$-marked stable trees: the stratum $\mathscr{T}$ is contained in the closure of the stratum $\mathscr{T}'$ if and only if $\mathscr{T}$ contracts along edges to $\mathscr{T}'$. Observe that the stratum of the one-vertex tree $\mathscr{T} = \mathscr{T}_{\emptyset}$ is the non-compactified moduli space.
\end{sloppypar}

\begin{sloppypar}
The action of $\op{PMod}_{0,B}$ on $\mathcal{T}_{0,B}$ extends straightforwardly to $\overline{\mathcal{T}}_{0,B}$. The next proposition tells us that the quotient (when given the correct complex analytic structure) is $\overline{\mathcal{M}}_{0,B}$ \cite{hubbard2014analytic}, and that the irreducible strata of $\overline{\mathcal{M}}_{0,B}$ can be interpreted as orbits of multicurves under the action of $\op{PMod}_{0,B}$.
\end{sloppypar}

\begin{proposition}\label{prop:strata of moduli space are orbits of multicurves}
There is an isomorphism $\overline{\mathcal{T}}_{0,B} / \op{PMod}_{0,B} \cong \overline{\mathcal{M}}_{0,B}$ of complex analytic spaces. This isomorphism identifies the orbit $\op{PMod}_{0,B}(\Delta)$ of the stratum $\Delta$ of $\overline{\mathcal{T}}_{0,B}$ with the stratum $\mathscr{T}_\Delta$ of $\overline{\mathcal{M}}_{0,B}$. Furthermore, containment of closures of strata descends from the reverse containment of multicurves.
\end{proposition}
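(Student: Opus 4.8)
The plan is to exhibit the isomorphism concretely as the descent of the restriction map
$$\overline{\pi}: \overline{\mathcal{T}}_{0,B} \longrightarrow \overline{\mathcal{M}}_{0,B}, \qquad [\varphi: S^2 \to \mathcal{X}] \longmapsto [\varphi|_B: B \hookrightarrow \mathcal{X}],$$
and then to track how $\overline{\pi}$ interacts with the two stratifications. First I would check $\overline{\pi}$ is well-defined: since a nodal complex structure collapses an essential multicurve $\Delta \subset S^2 \smallsetminus B$, the points of $B$ avoid the nodes and $\varphi|_B$ is a genuine nodal marking; its dual tree is stable because $\mathscr{T}_\Delta$ is; and an isotopy (resp. isomorphism $\mu$) intertwining $\varphi, \varphi'$ restricts to one intertwining $\varphi|_B, \varphi'|_B$. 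Because every $h \in \op{PMod}_{0,B}$ fixes $B$ pointwise, $(\varphi \circ h^{-1})|_B = \varphi|_B$, so $\overline{\pi}$ is constant on orbits and extends the covering $\pi$ of Section~\ref{subsubsec:moduli space}. The statement that the induced map on the quotient is an isomorphism of complex analytic spaces is exactly \cite{hubbard2014analytic}, which I would cite; the remaining work is the stratum-level bookkeeping.

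Next I would match strata. If $\varphi: S^2 \to \mathcal{X}$ collapses precisely the multicurve $\Delta$, then $\varphi$ restricts to a homeomorphism $S^2 \smallsetminus \Delta \xrightarrow{\cong} \mathcal{X} \smallsetminus \{\text{nodes}\}$. This gives a bijection between components of $S^2 \smallsetminus \Delta$ and irreducible components of $\mathcal{X}$, carries the curves of $\Delta$ to the nodes, and places $b \in B$ in the component corresponding to the one containing $\varphi(b)$. Comparing the two constructions in Section~\ref{subsubsec:stable trees} and Section~\ref{subsubsec:augteich}, the dual tree of the marking $\varphi|_B$ is therefore isomorphic to $\mathscr{T}_\Delta$ as a $B$-marked tree. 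Hence $\overline{\pi}$ maps the stratum $\Delta$ of $\overline{\mathcal{T}}_{0,B}$ into the stratum $\mathscr{T}_\Delta$ of $\overline{\mathcal{M}}_{0,B}$, and since a mapping class induces an isomorphism of $B$-marked dual trees, the whole orbit-union $\bigcup_{g \in \op{PMod}_{0,B}} (\text{stratum } g\cdot\Delta)$ lands in the stratum $\mathscr{T}_\Delta$.

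For the reverse inclusion --- and this is the crux of the argument --- I would combine surjectivity of $\overline{\pi}$ with the change-of-coordinates principle for the pure mapping class group. Given a stable nodal marking with dual tree $\mathscr{T}_\Delta$, a standard pinching construction produces a nodal complex structure $\varphi$ mapping onto it, so $\overline{\pi}$ is surjective; moreover $\varphi$ collapses some multicurve $\Delta''$ with $\mathscr{T}_{\Delta''} \cong \mathscr{T}_\Delta$. The key input is that two multicurves in $(S^2,B)$ lie in the same $\op{PMod}_{0,B}$-orbit precisely when their $B$-marked dual trees are isomorphic; this is the change-of-coordinates principle, and it gives $\Delta'' \in \op{PMod}_{0,B}(\Delta)$. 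Thus the stratum $\mathscr{T}_\Delta$ is exactly the image of the orbit-union, and because $\overline{\pi}$ realises $\overline{\mathcal{M}}_{0,B}$ as the quotient (its fibres are the $\op{PMod}_{0,B}$-orbits), the orbit $\op{PMod}_{0,B}(\Delta)$ is identified with the stratum $\mathscr{T}_\Delta$. I expect the fibre-equals-orbit property to be the most delicate point, requiring that two nodal complex structures with isomorphic restrictions to $B$ differ by precomposition with a pure mapping class; this is essentially the content of the cited theorem of Hubbard.

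Finally, the containment statement descends formally. The map $\overline{\pi}$ is a continuous, open quotient map that carries strata to strata as above, so closure relations among strata in $\overline{\mathcal{T}}_{0,B}$ are transported to closure relations among strata in $\overline{\mathcal{M}}_{0,B}$. Since the stratum $\Delta$ lies in the closure of the stratum $\Delta'$ if and only if $\Delta' \subset \Delta$, and since (by Section~\ref{subsubsec:stable trees}) $\Delta' \subset \Delta$ holds if and only if $\mathscr{T}_{\Delta'}$ is obtained from $\mathscr{T}_\Delta$ by edge contraction, the closure ordering on the strata $\mathscr{T}_\Delta$ of $\overline{\mathcal{M}}_{0,B}$ is exactly edge contraction, as claimed.
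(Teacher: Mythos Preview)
The paper does not actually give a proof of this proposition: it is stated as background, with the analytic isomorphism attributed to \cite{hubbard2014analytic} in the sentence preceding the statement, and the stratum-level assertions are treated as standard. So there is no paper proof to compare against.

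Your sketch is a correct and well-organised justification of what the paper leaves implicit. The well-definedness of $\overline{\pi}$, the identification of the dual tree of $\varphi|_B$ with $\mathscr{T}_\Delta$ via the homeomorphism $S^2 \smallsetminus \Delta \cong \mathcal{X} \smallsetminus \{\text{nodes}\}$, and the use of the change-of-coordinates principle to show that the $\op{PMod}_{0,B}$-orbit of $\Delta$ is exactly the set of multicurves with dual tree isomorphic to $\mathscr{T}_\Delta$ are all the right ingredients. Your caveat that the fibre-equals-orbit property is the delicate analytic point, and that it is what \cite{hubbard2014analytic} supplies, is exactly in line with how the paper handles it. The containment argument at the end is also correct and matches the paper's later remark (Section~\ref{subsubsec:stable trees}) that $\Delta' \subset \Delta$ corresponds to edge contraction $\mathscr{T}_\Delta \to \mathscr{T}_{\Delta'}$.
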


We will prove Theorem \ref{thm:main theorem} in analogy with this proposition: every connected component of a compactified Hurwitz space is a quotient of $\overline{\mathcal{T}}_{0,B}$ by a \textit{subgroup} $\op{LMod}_f \leq \op{PMod}_{0,B}$, and this identifies the irreducible strata of the connected component with orbits of multicurves under the action of $\op{LMod}_f$ (Proposition \ref{prop:HV10 statement}). Just as the orbits $\op{PMod}_{0,B}(\Delta)$ are indexed by stable trees, the orbits $\op{LMod}_f(\Delta)$ are indexed by Hurwitz equivalence classes of decorated trees. 
\end{subsubsection}

\begin{subsubsection}{The Deligne-Mumford compactification \texorpdfstring{$\overline{\mathcal{M}}_{g,A}$}{{\=M}gA}}\label{subsubsec:MgAbar}
The Deligne-Mumford compactification $\overline{\mathcal{M}}_{g,A}$ is similar. A \textit{genus-$g$ nodal marking} is an injection $\iota: A \hookrightarrow \mathcal{X}$ to a compact nodal Riemann surface of arithmetic genus $g$, such that $\iota(A)$ does not intersect the nodes of $\mathcal{X}$. It is \textit{stable} if its $A$-marked dual graph (with each vertex weighted by the genus of the normalisation of the corresponding component) is stable. The Deligne-Mumford compactification $\overline{\mathcal{M}}_{g,A}$ is the collection of genus-$g$ stable nodal markings, up to isomorphism:
$$\overline{\mathcal{M}}_{g,A} = \left\{ \iota: A \hookrightarrow \mathcal{X} \text{ a genus-$g$ stable nodal marking } \right\} \big/ \sim$$
where ${\iota: A \hookrightarrow \mathcal{X}} \sim {\iota': A \hookrightarrow \mathcal{X}'}$ if and only if there exists an isomorphism ${\mu: \mathcal{X} \rightarrow \mathcal{X}'}$ such that ${\iota' = \mu \circ \iota}$. The compactification is a projective variety.

$\overline{\mathcal{M}}_{g,A}$ decomposes into locally closed, irreducible strata that are indexed by $\op{Stab}_{g,A}$: the point $[\iota: A \hookrightarrow \mathcal{X}]$ is in the stratum (indexed by) $\mathscr{G} \in \op{Stab}_{g,A}$ if the dual graph of $\iota$ is isomorphic to $\mathscr{G}$. Containment of closures of strata is given by edge contraction.

Given a non-stable genus-$g$ nodal marking $\iota: A \hookrightarrow \mathcal{X}$, there exists a canonical stable genus-$g$ nodal marking $\iota': A \hookrightarrow \mathcal{X}'$ called the \textit{stabilisation}. It is obtained by contracting genus-$0$ components that contain fewer than three special points (meaning nodes or marked points). The dual graph $\mathscr{G}'$ of $\iota'$ is the stabilisation of the dual graph $\mathscr{G}$ of $\iota$. 
\end{subsubsection}
\end{subsection}

\begin{subsection}{Branched covers of surfaces}
\begin{sloppypar}
Suppose $f: X \rightarrow S^2$ is a continuous map so that $f^{-1}(B)$ is finite,
$$f: X \smallsetminus f^{-1}(B) \longrightarrow S^2 \smallsetminus B$$
is a covering map of degree $d$, and $A \subset f^{-1}(B)$. We call the map of pairs ${f: (X,A) \rightarrow (S^2,B)}$ a \textit{branched cover of degree $d$} to the sphere.

Away from $f^{-1}(B)$, the map $f$ is a local homeomorphism. At a point $x \in f^{-1}(B)$, however, $f$ can be given local coordinates (sending $x$ and $f(x)$ to 0) so that $f$ is ${z \mapsto z^k}$ for some integer ${k \geq 1}$. The number $k$ is well-defined and is called the \textit{local degree} (or \textit{ramification}) of $f$ at $x$, denoted $\op{rm}_f(x)$.

For $y \in B$, we write ${\op{br}_f(y) = \left(\op{rm}_f(x)\right)_{x \in f^{-1}(y)}}$ for the multi-set of local degrees of the points that map to $y$. This multi-set is a partition of the degree $d$ and is called the \textit{branch profile} of $y$. 
\end{sloppypar}

\begin{subsubsection}{Hurwitz equivalence of branched covers}
We now define a notion of equivalence for branched covers. It will be central later on in describing the connected components of a Hurwitz space.

\begin{definition} Let $f_1, f_2: (X, A) \rightarrow (S^2,B)$ be branched covers. We say that $f_1$ and $f_2$ are \textit{$(A,B)$-Hurwitz equivalent (via $h$, $\widetilde{h}$)} if there are homeomorphisms 
$$h: (S^2,B) \longrightarrow (S^2,B)\quad\text{and}\quad \widetilde{h}: (X,A) \longrightarrow (X,A)$$
so that $h$ fixes $B$ pointwise, $\widetilde{h}$ fixes $A$ pointwise, and the following diagram commutes
\begin{equation}\label{cd:hurwitz equivalence of branched covers}
\begin{tikzcd}
(X,A) \arrow[d,"f_1"] \arrow[r,"\widetilde{h}"] & (X,A) \arrow[d,"f_2"]\\
(S^2,B) \arrow[r, "h"] & (S^2,B)
\end{tikzcd}
\end{equation}
\end{definition}
\end{subsubsection}

\begin{subsubsection}{The liftable mapping class group} 
\begin{sloppypar}
We say that a homeomorphism ${h:(S^2,B) \rightarrow (S^2,B)}$ fixing $B$ pointwise is \textit{liftable} under a branched cover $f: (X,A) \rightarrow (S^2,B)$ if there exists a homeomorphism ${\widetilde{h}:(X,A) \rightarrow (X,A)}$ fixing $A$ pointwise such that ${h \circ f = f \circ \widetilde{h}}$. That is, if $h$ and $\widetilde{h}$ give an $(A,B)$-Hurwitz equivalence of $f$ with itself.

The set of liftable homeomorphisms (for a fixed $f$) descends to a finite-index subgroup $\op{LMod}_f \leq \op{PMod}_{0,B}$ called the \textit{liftable mapping class group} of $f$ \cite[Corollary 2.2]{koch2013}.
\end{sloppypar}
\end{subsubsection}
\end{subsection}

\begin{subsection}{Monodromy}
In order to give a combinatorial description of the strata of Hurwitz spaces, we will want to extract some combinatorics from branched covers. The concept of \textit{monodromy} gives us a meaningful way of doing this.

\begin{subsubsection}{Monodromy representations} 
\begin{sloppypar}
Fix a branched cover ${f:(X,A) \rightarrow (S^2,B)}$ of degree $d$. Choose a basepoint ${y \in S^2 \smallsetminus B}$. By definition, $f$ is a covering map away from the set $B$, and so $f^{-1}(y)$ contains exactly $d$ points. Choose a labelling of the preimage
$$l: f^{-1}(y) \longrightarrow \{y_1, \ldots, y_d\}.$$
If $\gamma$ is a loop in $S^2 \smallsetminus B$ based at $y$, then for any ${1 \leq i \leq d}$, $\gamma$ lifts to a path in ${X \smallsetminus f^{-1}(B)}$ from $y_i$ to some $y_j$ (by the path-lifting property of covering spaces). Setting $\sigma(i) = j$, we obtain a permutation $\sigma \in S_d$ called the \textit{monodromy permutation} of $\gamma$.

The monodromy permutation of $\gamma$ depends only on the homotopy class of $\gamma$. Thus, choosing a basepoint $y \in S^2 \smallsetminus B$ and a labelling ${l: f^{-1}(y) \rightarrow \{y_1, \ldots, y_d\}}$, we obtain a homomorphism
$$\Sigma: \pi_1(S^2 \smallsetminus B, y) \longrightarrow S_d$$
called the \textit{monodromy representation} of $f$. The image of the monodromy representation is called the \textit{monodromy group}.
\end{sloppypar}
\end{subsubsection}

The monodromy representation captures certain features of $f$, as suggested by the next two propositions. The proofs are omitted.

\begin{proposition}\label{prop:transitive monodromy group implies connected source}
With the notation above, the surface $X$ is connected if and only if the monodromy group acts transitively on $\{1, \ldots, d\}$. 
\end{proposition}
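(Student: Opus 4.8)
The plan is to reduce the statement to the standard correspondence from covering space theory between the connected components of a covering space and the orbits of the monodromy action on a fibre, and then to account for the punctures at $f^{-1}(B)$.

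First I would observe that, by hypothesis, the restriction $p := f|_{X \smallsetminus f^{-1}(B)} : X \smallsetminus f^{-1}(B) \longrightarrow S^2 \smallsetminus B$ is a covering map of degree $d$ onto a connected, locally path-connected, semi-locally simply connected base, with fibre $p^{-1}(y) = \{y_1, \ldots, y_d\}$ over the chosen basepoint $y$. By construction the monodromy representation $\Sigma$ is exactly the action of $\pi_1(S^2 \smallsetminus B, y)$ on this fibre by path-lifting. The key step is to show that the orbits of this action are in bijection with the connected components of $X \smallsetminus f^{-1}(B)$. For this I would argue that two fibre points $y_i, y_j$ lie in the same component if and only if they are joined by a path in $X \smallsetminus f^{-1}(B)$; the projection of such a path is a loop $\gamma$ at $y$ whose monodromy sends $i$ to $j$, and conversely the lift of a loop realising the relation $j = \Sigma(\gamma)(i)$ produces such a path, so "same component" and "same orbit" coincide. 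To see that every component meets the fibre, I would use the standard fact that the restriction of a covering map to a connected component of its total space is again a covering map onto the connected base, hence surjective. This yields that $X \smallsetminus f^{-1}(B)$ is connected if and only if the monodromy group acts transitively on $\{1, \ldots, d\}$.

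It then remains to pass between $X$ and $X \smallsetminus f^{-1}(B)$. Since $X$ is a surface and $f^{-1}(B)$ is finite, this is elementary: if $X$ is connected it is path-connected, and any path joining two points of $X \smallsetminus f^{-1}(B)$ can be put in general position to avoid the finite set $f^{-1}(B)$, so $X \smallsetminus f^{-1}(B)$ is path-connected; conversely, each point of $f^{-1}(B)$ lies in the closure of $X \smallsetminus f^{-1}(B)$, so $X$, being the union of the connected set $X \smallsetminus f^{-1}(B)$ with finitely many of its limit points, is connected. Combining the two paragraphs gives the claim. The only mildly delicate point -- the main obstacle, such as it is -- is the orbit/component correspondence, and within it the assertion that each component of the covering surjects onto the base; everything else is either the definition of the monodromy action or a routine point-set argument about puncturing a surface.
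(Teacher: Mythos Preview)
Your argument is correct and is the standard one: identify components of the punctured cover with monodromy orbits via path-lifting, then observe that removing finitely many points from a surface does not change connectedness. The paper itself omits the proof of this proposition entirely, so there is nothing to compare against; your write-up would serve as a perfectly good proof in its place.
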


\begin{subsubsection}{Keyhole loops}
There are special loops whose monodromy permutations are particularly important for us. A \textit{keyhole loop} based at $y$ is a simple loop that bounds (on its left) a disk containing a single point of $B$. The next proposition asserts that the monodromy permutations of keyhole loops contain information about the branching of $f$. Figure \ref{fig:monodromy explainer} gives the idea of the proof.

\begin{proposition}\label{prop:keyhole loop monodromy}
Let $f: (X, A) \rightarrow (S^2, B)$ be a branched cover of degree $d$, and choose a basepoint $y \in S^2 \smallsetminus B$ and a labelling $l: f^{-1}(y) \rightarrow \{y_1, \ldots, y_d\}$. Let $\gamma$ be a keyhole loop about $b \in B$ with monodromy permutation $\Sigma(\gamma)$. Then
\begin{enumerate}[label=(\roman*)]
\item there is a canonical bijection 
$$\mathit{cyc}_\gamma: f^{-1}(b) \longrightarrow \op{Cycles}(\Sigma(\gamma))$$
between $f^{-1}(b)$ and the disjoint cycles of $\Sigma(\gamma)$ (including cycles of length one) and
\item under this bijection, $\mathit{cyc}_\gamma(a)$ is a cycle of length $\op{rm}_f(a)$.
\end{enumerate}
\end{proposition}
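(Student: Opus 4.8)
The plan is to reduce everything to a local computation in a disk around $b$ and then apply the classification of connected covers of the circle. First I would choose a small open disk $D$ with $b \in D$ and $D \cap B = \{b\}$, small enough that the keyhole loop $\gamma$ is homotopic rel $y$ in $S^2 \smallsetminus B$ to a loop that runs out along an embedded arc $\alpha$ from $y$ to a point $y' \in \partial D$, once positively around $\partial D$ (oriented so that $b$ lies to its left), and back along $\alpha$; such a homotopy exists because $\gamma$ is simple and bounds a disk whose only point of $B$ is $b$.

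Next I would invoke the local normal form for branched covers to describe $f^{-1}(D)$. After possibly shrinking $D$, the preimage $f^{-1}(D)$ is a disjoint union of open disks $\{D_a\}_{a \in f^{-1}(b)}$, one for each preimage of $b$, with $D_a \cap f^{-1}(b) = \{a\}$ and with $f|_{D_a}$ equivalent to $z \mapsto z^{\op{rm}_f(a)}$ in suitable local coordinates. In particular $\sum_a \op{rm}_f(a) = d$, and over the punctured disk $D^* = D \smallsetminus \{b\}$ the restriction of $f$ is an honest covering map whose sheets are partitioned by the components $D_a^* = D_a \smallsetminus \{a\}$.

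I would then read off the cycle structure from the topology of $D^*$. Since $D^*$ deformation retracts onto a circle, $\pi_1(D^*, y')$ is infinite cyclic, generated by $\partial D$, and each $D_a^*$ is a \emph{connected} degree-$\op{rm}_f(a)$ cover of $D^*$. By the classification of connected covers of the circle, $\partial D$ acts on the fibre $f^{-1}(y') \cap D_a^*$ as a single cycle of length $\op{rm}_f(a)$. Hence the disjoint cycles of the monodromy of $\partial D$ are in canonical bijection with the components $D_a^*$, equivalently with the points $a \in f^{-1}(b)$, the cycle attached to $a$ having length $\op{rm}_f(a)$.

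Finally I would transport this back to the basepoint $y$. Lifting $\alpha$ to the $d$ sheets of $f$ identifies $f^{-1}(y)$ with $f^{-1}(y')$, and under the homotopy above $\Sigma(\gamma)$ is the conjugate of the monodromy of $\partial D$ by this identification; conjugation preserves cycle lengths and sends the cycle attached to $a$ to a well-defined cycle of $\Sigma(\gamma)$. Declaring $\mathit{cyc}_\gamma(a)$ to be this cycle yields the canonical bijection of (i), and the length count gives (ii). The only real content lies in the local normal form of the second step—the decomposition of $f^{-1}(D)$ into disks with prescribed ramification—on which both parts rest; the remaining steps are the standard theory of covers of the circle together with the conjugation bookkeeping needed to keep the bijection canonical.
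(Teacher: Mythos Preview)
Your argument is correct and is the standard proof of this fact. The paper itself omits the proof entirely, saying only that Figure~\ref{fig:monodromy explainer} ``gives the idea of the proof''; that figure depicts exactly the local picture you describe (a small loop around $b$ lifting to cycles indexed by preimages, with cycle length equal to local degree), so your proposal is a faithful fleshing-out of the intended argument.
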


\begin{figure}[ht]
\centering
\includegraphics[scale=1]{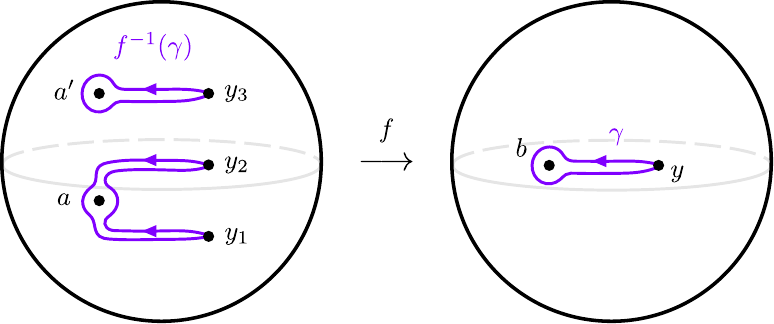}
\caption{\sloppy $f$ is a degree-3 branched cover with $f^{-1}(b) = \{a,a'\}$, where $\op{rm}_f(a) = 2$ and $\op{rm}_f(a') = 1$. The loop $\gamma$ has monodromy permutation ${\Sigma(\gamma) = (1\,2)(3)}$, and ${\mathit cyc}_\gamma(a) = (1\,2)$ and ${\mathit cyc}_\gamma(a') = (3)$.}
\label{fig:monodromy explainer}
\end{figure}
\end{subsubsection}
\end{subsection}

\begin{subsection}{Portraits}\label{subsec:portraits}
A Hurwitz space parametrises holomorphic maps of Riemann surfaces with marked points that satisfy prescribed branching conditions. The presribed branching conditions are encoded in a \textit{portrait}\footnote{These are called \textit{static} portraits in \cite{koch2013}, to distinguish them from \textit{dynamical} portraits.}, defined as follows (modifying notation from \cite{ramadas2017}).

\begin{definition}\label{def:portrait}
A \textit{portrait} is the data $P = (g,d,B,A, \phi:A \rightarrow B,\op{br},\op{rm})$, where
\begin{itemize}
\item $g$ is a nonnegative integer (the \textit{source genus}\footnote{In this paper, we only consider maps whose target surface is the sphere, and so our portraits will only record the genus of the source.})
\item $d$ is a positive integer (the \textit{degree})
\item $B$ is a finite set with $|B| \geq 3$ (the \textit{target marks})
\item $A$ is a finite set with $2 - 2g - |A| < 0$ (the \textit{source marks})
\item $\phi: A \rightarrow B$ (the \textit{function} on the source marks)
\item $\op{br}: B \rightarrow \{\text{partitions of } d\}$ (the \textit{branch profiles} of the target marks)
\item $\op{rm}: A \rightarrow \Z_{\geq 0}$ (the \textit{ramifications} (local degrees) at the source marks)
\end{itemize}
satisfying the following two conditions:
\begin{enumerate}
\item $\sum_{b \in B} (d - \op{len}(\op{br}(b))) - 2g = 2d -2$ (the \textit{Riemann-Hurwitz formula})
\item for each $b \in B$, $(\op{rm}(a))_{a \in \phi^{-1}(b)}$ is a sub-multiset of $\op{br}(b)$
\end{enumerate}
\end{definition}

Portraits are conveniently represented by directed graphs with certain edge and vertex labels. For instance, the directed graph
\[
\begin{tikzcd}[column sep=small, row sep=12mm, column sep=2.7mm, ampersand replacement=\&]
\overset{\displaystyle a_2}{\critpt}\arrow[dr,"\scriptstyle 2" left, line width=0.5mm] 
\& \&
\stdpt\arrow[dl,"\scriptstyle 1" right, line width=0.5mm]
\& \& ~\&
\overset{\displaystyle a_3}{\critpt}\arrow[dr, "\scriptstyle 2" left, line width=0.5mm] 
\& \&
\stdpt\arrow[dl,"\scriptstyle 1" right, line width=0.5mm]
\& \& ~\&
\critpt\arrow[dr,"\scriptstyle 2" left, line width=0.5mm]
\& \&
\overset{\displaystyle a_4}{\stdpt}\arrow[dl,"\scriptstyle 1" right, line width=0.5mm]
\& \& ~\&
\critpt\arrow[dr,"\scriptstyle 2" left, line width=0.5mm]
\& \&
\overset{\displaystyle a_1}{\stdpt}\arrow[dl,"\scriptstyle 1" right, line width=0.5mm]
\\
\&
\underset{\displaystyle b_1}{\stdpt} 
\& \& \& \& \&
\underset{\displaystyle b_2}{\stdpt} 
\& \& \& \& \&
\underset{\displaystyle b_3}{\stdpt} 
\& \& \& \& \&
\underset{\displaystyle b_4}{\stdpt} 
\&
\end{tikzcd}
\]
fixes the data $d=3$, $B=\{b_1,b_2,b_3,b_4\}$, $A= \{a_1, a_2,a_3,a_4\}$,
\begin{equation*}
\begin{gathered}
\phi(a_1) = b_4,\enskip \phi(a_2) = b_1,\enskip \phi(a_3) = b_2,\enskip \phi(a_4) = b_3,\\
\op{br}(b_1) = \op{br}(b_2) =\op{br}(b_3) =\op{br}(b_4) = (2,1),\\
\op{rm}(a_1) = \op{rm}(a_4) = 1,\enskip \op{rm}(a_2) = \op{rm}(a_3) = 2.
\end{gathered}
\end{equation*}
Circled vertices signify ramification points. The source genus $g$ can be worked out from the Riemann-Hurwitz formula: $g = 0$.

\begin{sloppypar}
A branched cover ${f:(X,A) \rightarrow (S^2,B)}$ defines a portrait ${P(f) = (g, d, B, A, \phi, \op{br}, \op{rm})}$ where $g$ is the genus of $X$, $d$ is the degree of $f$, ${\phi = f|_A}$, ${\op{br} = \op{br}_f|_B}$ and ${\op{rm} = \op{rm}_f|_A}$. We say that $f$ \textit{realises} the portrait $P$ if ${P(f) = P}$. Note that if $f' :(X,A) \rightarrow (S^2,B)$ is an $(A,B)$-Hurwitz equivalent branched cover, then $P(f) = P(f')$. 
\end{sloppypar}
\end{subsection}

\begin{subsection}{Hurwitz spaces} 
\begin{sloppypar}
Let ${F: \mathcal{X} \rightarrow \mathbb{P}^1}$ be a holomorphic map of Riemann surfaces, and let ${\iota_A: A \hookrightarrow \mathcal{X}}$ and ${\iota_B: B \hookrightarrow \mathbb{P}^1}$ be injections. We call the triple ${(F,\iota_A,\iota_B)}$ a \textit{marked holomorphic map} to $\mathbb{P}^1$.

Now fix a portrait ${P = (g,d,B,A,\phi,\op{br},\op{rm})}$. We say that a marked holomorphic map ${(F: \mathcal{X} \rightarrow \mathbb{P}^1, \iota_A: A \hookrightarrow \mathcal{X}, \iota_B: B \hookrightarrow \mathbb{P}^1)}$ \textit{realises} the portrait $P$ if it `fits into' the portrait, meaning $\mathcal{X}$ is a Riemann surface of genus $g$, $F$ has degree $d$, ${\iota_B^{-1} \circ F \circ \iota_A = \phi}$ ($F$ maps the marked points as prescribed by $\phi$), ${\op{br}_{F} \circ\, \iota_B = \op{br}}$ (the branch profiles match), and ${\op{rm}_F \circ \, \iota_A = \op{rm}}$ (the local degrees match).
\end{sloppypar}

The \textit{Hurwitz space} associated to $P$ is the collection $\mathcal{H} = \mathcal{H}(P)$ of marked holomorphic maps that realise $P$, up to pre- and post-composition by isomorphisms. As a set,
$$\mathcal{H} = \left\{\left(F: \mathcal{X} \rightarrow \mathbb{P}^1,\, \iota_A: A \hookrightarrow \mathcal{X},\, \iota_B: B \hookrightarrow \mathbb{P}^1\right) \text{ realising } P \right\} \big/ \sim,$$
where $(F: \mathcal{X} \rightarrow \mathbb{P}^1,\, \iota_A,\, \iota_B) \sim (F': \mathcal{X}' \rightarrow \mathbb{P}^1,\, \iota_A',\, \iota_B')$ if and only if there exists an isomorphism $\nu: \mathcal{X} \rightarrow \mathcal{X}'$ and a M\"obius transformation $\mu: \mathbb{P}^1 \rightarrow \mathbb{P}^1$ such that 
$$F' = \mu \circ F \circ \nu^{-1},\quad \iota_A' = \nu \circ \iota_A \quad\text{and}\quad \iota_B' = \mu \circ \iota_B.$$ 
The Hurwitz space $\mathcal{H}$ is a complex manifold\footnote{A Hurwitz space is in general a complex \textit{orbifold}, but there are no orbifold singularities when the Hurwitz space parametrises maps to $\mathbb{P}^1$.} of dimension $|B|-3$, if it is nonempty.

\begin{subsubsection}{The target and source maps}
The Hurwitz space $\mathcal{H}$ comes equipped with a forgetful target map and a forgetful source map. The target map is a holomorphic covering map $\pi_B: \mathcal{H} \rightarrow \mathcal{M}_{0,B},\, [(F, \iota_A, \iota_B)] \mapsto [\iota_B]$. The source map is holomorphic but not necessarily surjective, $\pi_A: \mathcal{H} \rightarrow \mathcal{M}_{g,A},\, [(F, \iota_A, \iota_B)] \mapsto [\iota_A]$.
\end{subsubsection}

\begin{example}\label{example:Hurwitz space example d=3}
Let $\mathcal{H}$ be the Hurwitz space associated to the degree-3 portrait in Section \ref{subsec:portraits}. A point of $\mathcal{H}$ is a triple $(F: \mathbb{P}^1 \rightarrow \mathbb{P}^1,\, \iota_A: A \hookrightarrow \mathbb{P}^1,\, \iota_B: B \hookrightarrow \mathbb{P}^1)$, up to pre- and post-composition by M\"obius transformations. The rational map
$$F(z)= \frac{z^3 + cz^2}{(3+2c)z - (2+c)}$$
with markings
$$\enskip \iota_A(a_1) = \frac{c}{2c+3},\enskip \iota_A(a_2) = 0,\enskip \iota_A(a_3) = 1,\enskip \iota_A(a_4) = \frac{2+c}{3+2c}$$
and
$$\iota_B(b_1) = 0,\enskip \iota_B(b_2) = 1,\enskip \iota_B(b_3) = \infty,\enskip \iota_B(b_4) = -\frac{c^3(c+2)}{(2c+3)^3}$$
gives a point $(F,\iota_A,\iota_B)$ of the Hurwitz space for $c \in \mathbb{P}^1 \smallsetminus \{0, -2, -3, -1, -\frac{3}{2}, \infty\}$ -- we must exclude exactly these six values to avoid collisions between the marked points.

\[
\begin{tikzcd}[column sep=small, row sep=12mm, column sep=2.7mm, ampersand replacement=\&]
\overset{\displaystyle 0}{\critpt}\arrow[dr,"\scriptstyle 2" left, line width=0.5mm] 
\& \&
\stdpt\arrow[dl,"\scriptstyle 1" right, line width=0.5mm]
\& \& ~\&
\overset{\displaystyle 1}{\critpt}\arrow[dr, "\scriptstyle 2" left, line width=0.5mm] 
\& \&
\stdpt\arrow[dl,"\scriptstyle 1" right, line width=0.5mm]
\& \& ~\&
\critpt\arrow[dr,"\scriptstyle 2" left, line width=0.5mm]
\& \&
\hspace{-2.5mm}
\overset{\scriptstyle \frac{2+c}{3+2c}}{\stdpt}\arrow[dl,"\scriptstyle 1" right, line width=0.5mm]
\hspace{-2.5mm}
\& \& ~\&
\critpt\arrow[dr,"\scriptstyle 2" left, line width=0.5mm]
\& \&
\hspace{-2.5mm}
\overset{\scriptstyle \frac{c}{2c+3}}{\stdpt}\arrow[dl,"\scriptstyle 1" right, line width=0.5mm]
\hspace{-2.5mm}
\\
\&
\underset{\displaystyle 0}{\stdpt} 
\& \& \& \& \&
\underset{\displaystyle 1}{\stdpt} 
\& \& \& \& \&
\underset{\displaystyle \infty}{\stdpt} 
\& \& \& \& \&
\hspace{-3mm}
\underset{\scriptstyle -\frac{c^3(c+2)}{(2c+3)^3}}{\stdpt} 
\hspace{-3mm}
\&
\end{tikzcd}
\] 
This is in fact a parametrisation of the Hurwitz space: $\mathcal{H} \cong \mathbb{P}^1 \smallsetminus \{0,-1,-2,-3,-3/2,\infty\}$.
\end{example}

\begin{subsubsection}{Marked holomorphic maps from branched covers}\label{subsubsec:marked holomorphic maps from branched covers}
\begin{sloppypar}
Suppose ${f: (X,A) \rightarrow (S^2,B)}$ is a branched cover and $P=P(f)$ is its portrait. Then $f$ picks out a connected component $\mathcal{H}_f$ of the Hurwitz space $\mathcal{H}(P)$ as follows. If $\varphi: S^2 \rightarrow \mathbb{P}^1$ is a complex structure, then by the Riemann Existence Theorem (see \cite[Theorem 6.2.2]{cavalieri2016}) there exists a complex structure $\psi: X \xrightarrow{\cong} \mathcal{X}$ on the source surface so that $f$ becomes a marked holomorphic map ${(F = \varphi \circ f \circ \psi^{-1}: \mathcal{X} \rightarrow \mathbb{P}^1,\, \iota_A = \psi|_A,\, \iota_B = \varphi|_B)}$ in $\mathcal{H}(P)$. This construction defines a holomorphic covering map to a connected component of $\mathcal{H}(P)$ that we denote by $\mathcal{H}_f$:
\begin{align*}
\pi_f: \mathcal{T}_{0,B} &\longrightarrow \mathcal{H}_f\\
\left[\varphi\right] &\longmapsto \left[\left(F: \mathcal{X} \rightarrow \mathbb{P}^1,\, \psi|_A,\, \varphi|_B\right)\right].
\end{align*}
Two points of $\mathcal{T}_{0,B}$ induce equivalent marked holomorphic maps if and only if they differ by an element of $\op{LMod}_f$, whence $\mathcal{T}_{0,B} / \op{LMod}_f \cong \mathcal{H}_f$; \cite[Theorem 2.6]{koch2013} proves this for the case $X=S^2$ and the proof adapts straightforwardly to the case of general $X$.	

Every connected component of a Hurwitz space can be obtained as such a quotient. Furthermore, $\mathcal{H}_f = \mathcal{H}_{f'}$ for branched covers $f, f': (X,A) \rightarrow (S^2,B)$ if and only if $f$ and $f'$ are $(A,B)$-Hurwitz equivalent \cite[Corollary 2.7]{koch2013}, in which case (slightly generalising \cite[Lemma 2.5]{koch2013}), we have the following lemma.

\begin{lemma}\label{lemma:pif and pifprime}
If $f$ and $f'$ are $(A,B)$-Hurwitz equivalent via homeomorphisms $h, \widetilde{h}$, then $\pi_f = \pi_{f'} \circ h: \mathcal{T}_{0,B} \rightarrow \mathcal{H}_f = \mathcal{H}_{f'}$ (with $h$ acting on $\mathcal{T}_{0,B}$ as explained in Section \ref{subsubsec:moduli space}).
\end{lemma}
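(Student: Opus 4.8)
The plan is to unwind both sides of the claimed equality at an arbitrary point $[\varphi] \in \mathcal{T}_{0,B}$ and exhibit a single marked holomorphic map that represents both images. First I would assemble the two ingredients. The action convention from Section \ref{subsubsec:moduli space} gives $[h] \cdot [\varphi] = [\varphi \circ h^{-1}]$, so that $(\pi_{f'} \circ h)([\varphi]) = \pi_{f'}([\varphi \circ h^{-1}])$. The commutativity of diagram \eqref{cd:hurwitz equivalence of branched covers}, applied with $f_1 = f$ and $f_2 = f'$, reads $f' \circ \widetilde{h} = h \circ f$, equivalently $f' = h \circ f \circ \widetilde{h}^{-1}$. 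These two facts are the only structural inputs.

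Next I fix $[\varphi] \in \mathcal{T}_{0,B}$ and let $\psi: X \to \mathcal{X}$ be a complex structure on the source furnished by the Riemann Existence Theorem, so that $F = \varphi \circ f \circ \psi^{-1}$ is holomorphic and $\pi_f([\varphi]) = [(F, \psi|_A, \varphi|_B)]$. To evaluate $\pi_{f'}([\varphi \circ h^{-1}])$ I would exploit that $\pi_{f'}$ is well-defined independently of the Riemann Existence choice of source structure (Section \ref{subsubsec:marked holomorphic maps from branched covers}), so I am free to pick a convenient one. The key move is to set $\psi' := \psi \circ \widetilde{h}^{-1}: X \to \mathcal{X}$. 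A one-line substitution using $f' \circ \widetilde{h} = h \circ f$ then gives $(\varphi \circ h^{-1}) \circ f' \circ (\psi')^{-1} = (\varphi \circ h^{-1}) \circ f' \circ \widetilde{h} \circ \psi^{-1} = \varphi \circ f \circ \psi^{-1} = F$, so $\psi'$ is a legitimate Riemann Existence choice and the resulting holomorphic map is literally $F$ on the same Riemann surface $\mathcal{X}$.

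Finally I would check the two markings. Since $\widetilde{h}$ fixes $A$ pointwise, $\psi'|_A = (\psi \circ \widetilde{h}^{-1})|_A = \psi|_A$; and since $h$ fixes $B$ pointwise, $(\varphi \circ h^{-1})|_B = \varphi|_B$. Hence the triple representing $\pi_{f'}([\varphi \circ h^{-1}])$ is exactly $(F, \psi|_A, \varphi|_B)$, the same triple representing $\pi_f([\varphi])$, so the two agree with $\nu = \op{id}$ and $\mu = \op{id}$ in the equivalence relation on marked holomorphic maps. As $[\varphi]$ was arbitrary, this yields $\pi_f = \pi_{f'} \circ h$.

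I do not expect a genuine obstacle here: the argument is a short diagram chase. The one point demanding care is the legitimacy of substituting $\psi' = \psi \circ \widetilde{h}^{-1}$ for the a priori unrelated complex structure that the Riemann Existence Theorem attaches to $\varphi \circ h^{-1}$ and $f'$; this is justified precisely by the well-definedness of $\pi_{f'}$, which guarantees that any valid source structure gives the same point of $\mathcal{H}_{f'}$. Beyond that, the only place where errors could creep in is bookkeeping of the right-to-left composition convention and the inversion in the $\op{PMod}_{0,B}$-action, so I would state both explicitly before computing.
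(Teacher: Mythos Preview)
Your argument is correct. The paper does not actually supply a proof of this lemma: it is stated as a slight generalisation of \cite[Lemma~2.5]{koch2013} and left at that. Your diagram chase fills in exactly what is needed, and the one delicate point you flag --- that any valid Riemann Existence choice of source structure yields the same point of $\mathcal{H}_{f'}$, so you may take $\psi' = \psi \circ \widetilde{h}^{-1}$ --- is handled correctly.
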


\end{sloppypar}
\end{subsubsection}

\begin{subsubsection}{Connected components of Hurwitz spaces}
For a general portrait, the associated Hurwitz space can have multiple connected components, or none\footnote{The problem of classifying the portraits whose associated Hurwitz space is empty is known as the \textit{Hurwitz existence problem} and dates back to \cite{hurwitz1891}; see \cite{edmonds1984}.}. By the previous section, the connected components are in bijection with $(A,B)$-Hurwitz equivalence classes of branched covers that realise the portrait.

\begin{proposition}\label{prop:components of Hurwitz space}
\begin{sloppypar}
Let $X$ be a compact surface of genus $g$, and let $P$ be a portrait with source genus $g$, source marks $A \subset X$ and target marks $B \subset S^2$. Let $\mathcal{H}$ be the associated Hurwitz space. Then $\mathcal{H} = \bigsqcup_{[f]} \mathcal{H}_{f},$ where the disjoint union ranges over $(A,B)$-Hurwitz equivalence classes of branched covers ${f:(X,A) \rightarrow (S^2,B)}$ that realise $P$.
\end{sloppypar}
\end{proposition}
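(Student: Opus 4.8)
The plan is to assemble the statement directly from the facts established in Section \ref{subsubsec:marked holomorphic maps from branched covers}, where essentially all of the work has already been done. First I would fix the setup: for each branched cover $f \colon (X,A) \to (S^2,B)$ with $P(f) = P$, recall that the construction of that section produces a holomorphic covering map $\pi_f \colon \mathcal{T}_{0,B} \to \mathcal{H}_f$ whose image is a connected component $\mathcal{H}_f$, with $\mathcal{T}_{0,B}/\op{LMod}_f \cong \mathcal{H}_f$. The one point to verify here is that $\mathcal{H}_f$ really sits inside $\mathcal{H}(P)$ rather than some other Hurwitz space: this holds because the marked holomorphic map $(F = \varphi \circ f \circ \psi^{-1},\, \psi|_A,\, \varphi|_B)$ has the same degree, source genus, function on marks, branch profiles and ramification data as $f$ itself — none of these is altered by the choice of complex structures $\varphi, \psi$ — so its portrait equals $P(f) = P$, and the image point lies in $\mathcal{H}(P)$.

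Next I would set up the indexing bijection between $(A,B)$-Hurwitz equivalence classes of branched covers realising $P$ and connected components of $\mathcal{H}$, via $[f] \mapsto \mathcal{H}_f$. Well-definedness and injectivity of this assignment are exactly the content of \cite[Corollary 2.7]{koch2013} as recalled above: $\mathcal{H}_f = \mathcal{H}_{f'}$ if and only if $f$ and $f'$ are $(A,B)$-Hurwitz equivalent. Surjectivity onto the set of components is the assertion, also recalled in that section, that every connected component of $\mathcal{H}$ arises as some $\mathcal{H}_f$ — given any point of a component, one extracts a branched cover $f$ realising $P$ by choosing a topological trivialisation of its source and target, and the component of that point is precisely $\mathcal{H}_f$.

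Finally I would conclude by invoking the elementary topological fact that the connected components of $\mathcal{H}$ are pairwise disjoint and cover $\mathcal{H}$. Combined with the bijection of the previous step, this yields $\mathcal{H} = \bigsqcup_{[f]} \mathcal{H}_f$ with the union ranging over Hurwitz equivalence classes, as claimed.

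The proof is therefore almost entirely bookkeeping: the genuine content is imported from \cite{koch2013} (the equivalence criterion and the exhaustion of components by quotients of $\mathcal{T}_{0,B}$), and the only self-contained verification is that the portrait is a topological invariant of a branched cover, insensitive to the complex structure. I do not expect a serious obstacle; the one subtlety worth stating carefully is the passage from an abstract marked holomorphic map with source $\mathcal{X}$ to a branched cover of the fixed topological surface $X$, which requires choosing homeomorphisms $\mathcal{X} \cong X$ and $\mathbb{P}^1 \cong S^2$ and observing that the ambiguity in these choices is precisely what the $(A,B)$-Hurwitz equivalence relation absorbs.
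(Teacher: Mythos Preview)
Your proposal is correct and matches the paper's approach exactly: the paper does not give a standalone proof of this proposition, instead prefacing it with the sentence ``By the previous section, the connected components are in bijection with $(A,B)$-Hurwitz equivalence classes of branched covers that realise the portrait,'' which is precisely the bookkeeping argument you outline. Your write-up in fact supplies more detail than the paper does, including the explicit check that the portrait is unchanged by the choice of complex structures.
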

\end{subsubsection}
\end{subsection}

\begin{subsection}{Compactifying Hurwitz spaces}\label{subsec:HV10}
Let $\mathcal{H}$ be a Hurwitz space with defining portrait $P$. Harris and Mumford give a compactification $\overline{\mathcal{H}}^\text{HM}$ by a space of \textit{admissible covers} \cite{harris1982kodaira}, which are maps of nodal Riemann surfaces. The space $\overline{\mathcal{H}}^\text{HM}$ is not normal, but its normalisation -- which we denote by $\overline{\mathcal{H}}$ -- is smooth. Abramovich, Corti and Vistoli reinterpret $\overline{\mathcal{H}}$ as a complex orbifold that parametrises \textit{balanced twisted covers} \cite{abramovich2003twisted}.

The target and source maps $\pi_B$ and $\pi_A$ extend to the boundaries of $\overline{\mathcal{H}}^\text{HM}$ and $\overline{\mathcal{H}}$: $\pi_B$ takes an admissible cover or a balanced twisted cover to the stable nodal marking in $\overline{\mathcal{M}}_{0,B}$ of its target, and $\pi_A$ takes an admissible cover or balanced twisted cover to the nodal marking in $\overline{\mathcal{M}}_{g,A}$ of its source (stabilised, if not already stable -- see Section \ref{subsubsec:MgAbar}).

The boundary of $\overline{\mathcal{H}}$ is a simple normal crossings divisor, and this endows $\overline{\mathcal{H}}$ with a natural stratification: the codimension-$k$ strata are connected components of $k$-fold intersections of divisors. The stratification of $\overline{\mathcal{H}}$ admits a description that will be useful in the proof below: an irreducible stratum of $\overline{\mathcal{H}}$ is a connected component of the preimage of an irreducible stratum of $\overline{\mathcal{M}}_{0,B}$, under $\pi_B$.

Suppose $f$ is a branched cover that realises $P$, and let $\overline{\mathcal{H}}_f$ be the connected component of $\overline{\mathcal{H}}$ containing $\mathcal{H}_f$. The next proposition, in analogy with Proposition \ref{prop:strata of moduli space are orbits of multicurves}, tells us that $\overline{\mathcal{H}}_f$ is the quotient of augmented Teichm\"uller space by the liftable mapping class group $\op{LMod}_f$, and that the irreducible strata of $\overline{\mathcal{H}}_f$ can be interpreted as orbits of multicurves. The result (which we state just for the complex analytic coarse spaces, to avoid orbifold structure technicalities) follows from \cite{hinich2010augmented}.

\begin{proposition}\label{prop:HV10 statement}
There is an isomorphism ${\overline{\mathcal{T}}_{0,B} / \op{LMod}_f \cong \overline{\mathcal{H}}_f}$ of complex analytic spaces. This isomorphism identifies the irreducible strata of $\overline{\mathcal{H}}_f$ with orbits of multicurves under the action of $\op{LMod}_f$. Furthermore, containment of closures of strata descends from the reverse containment of multicurves.
\end{proposition}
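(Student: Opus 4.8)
The plan is to bootstrap from the non-compactified identification $\mathcal{T}_{0,B}/\op{LMod}_f \cong \mathcal{H}_f$ furnished by the covering map $\pi_f$ of Section \ref{subsubsec:marked holomorphic maps from branched covers}, in close parallel with Proposition \ref{prop:strata of moduli space are orbits of multicurves}. The first task is the isomorphism of complex analytic spaces. Recall from \cite[Corollary 2.2]{koch2013} that $\op{LMod}_f$ is a finite-index subgroup of $\op{PMod}_{0,B}$. The action of $\op{PMod}_{0,B}$ on $\mathcal{T}_{0,B}$ extends to $\overline{\mathcal{T}}_{0,B}$, and by Hinich and Vaintrob \cite{hinich2010augmented} the quotient of $\overline{\mathcal{T}}_{0,B}$ by any finite-index subgroup of $\op{PMod}_{0,B}$ carries a canonical structure of normal complex analytic space compactifying the corresponding interior quotient. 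Applying this with the subgroup $\op{LMod}_f$ yields a normal complex analytic compactification of $\mathcal{T}_{0,B}/\op{LMod}_f \cong \mathcal{H}_f$; since $\mathcal{H}_f$ is dense in the (normal) connected component $\overline{\mathcal{H}}_f$ of the Abramovich--Corti--Vistoli normalisation, the Koch isomorphism extends uniquely to an isomorphism $\overline{\mathcal{T}}_{0,B}/\op{LMod}_f \cong \overline{\mathcal{H}}_f$. Normality of the quotient is precisely what pins it to $\overline{\mathcal{H}}_f$ rather than to the non-normal Harris--Mumford space.

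For the identification of strata, I would exploit that $\op{LMod}_f \leq \op{PMod}_{0,B}$ gives a factorisation $\overline{\mathcal{T}}_{0,B} \xrightarrow{\,q\,} \overline{\mathcal{H}}_f \xrightarrow{\,\pi_B\,} \overline{\mathcal{M}}_{0,B}$ of the full $\op{PMod}_{0,B}$-quotient map $p$, where $q$ is the composite of the isomorphism above with the quotient map and $\pi_B$ is the extended target map. Writing $S_\Delta$ for the stratum of $\overline{\mathcal{T}}_{0,B}$ indexed by $\Delta$, Proposition \ref{prop:strata of moduli space are orbits of multicurves} together with the structure of $\overline{\mathcal{T}}_{0,B}$ identifies the preimage $p^{-1}(\mathscr{T}_\Delta)$ with the disjoint union $\bigsqcup_{\Delta' \in \op{PMod}_{0,B}(\Delta)} S_{\Delta'}$ of the connected, locally closed strata indexed by the $\op{PMod}_{0,B}$-orbit of $\Delta$, and these are its connected components. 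Since $g \cdot S_{\Delta'} = S_{g \cdot \Delta'}$ for $g \in \op{LMod}_f$, the map $q$ sends two strata $S_{\Delta'}, S_{\Delta''}$ to the same (connected) image exactly when $\Delta'$ and $\Delta''$ lie in a common $\op{LMod}_f$-orbit, and distinct orbits give disjoint images. As $q$ is an open quotient map (automatic for a group action) and the strata $S_{\Delta'}$ are open in the locally connected space $p^{-1}(\mathscr{T}_\Delta)$, the images $q(S_\Delta)$, as $\Delta$ ranges over $\op{LMod}_f$-orbits inside $\op{PMod}_{0,B}(\Delta)$, are precisely the connected components of $\pi_B^{-1}(\mathscr{T}_\Delta)$ — that is, the irreducible strata of $\overline{\mathcal{H}}_f$ lying over $\mathscr{T}_\Delta$. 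Ranging over all $\mathscr{T}_\Delta$ yields the desired bijection between irreducible strata of $\overline{\mathcal{H}}_f$ and $\op{LMod}_f$-orbits of multicurves.

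The containment statement then descends directly. In $\overline{\mathcal{T}}_{0,B}$ one has $S_\Delta \subseteq \overline{S_{\Delta'}}$ if and only if $\Delta' \subseteq \Delta$. Because $q$ is a continuous, surjective, open quotient map, a stratum of $\overline{\mathcal{H}}_f$ lies in the closure of another if and only if some preimage stratum upstairs lies in the closure of a preimage stratum; hence $q(S_\Delta) \subseteq \overline{q(S_{\Delta'})}$ if and only if $\Delta'$ is contained in some $\op{LMod}_f$-translate of $\Delta$. Thus containment of closures of strata in $\overline{\mathcal{H}}_f$ is governed by reverse containment of multicurves, at the level of $\op{LMod}_f$-orbits.

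I expect the main obstacle to be the middle step: verifying cleanly that the images of the connected strata of $\overline{\mathcal{T}}_{0,B}$ are genuinely the connected components of the $\pi_B$-preimages in $\overline{\mathcal{H}}_f$, rather than, a priori, unions or proper subsets of components. This hinges on the openness of the quotient map, on the local finiteness and local connectedness of the stratification, and on the fact that only finitely many $\op{LMod}_f$-orbits occur inside each (infinite) $\op{PMod}_{0,B}$-orbit of multicurves, reflecting the finiteness of $\pi_B$. The underlying analytic and normality input is supplied by \cite{hinich2010augmented} and is cited rather than reproved.
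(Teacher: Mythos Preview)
Your overall strategy matches the paper's: invoke \cite{hinich2010augmented} for the analytic structure on the quotient, then identify strata as connected components of $\pi_B$-fibres over the strata of $\overline{\mathcal{M}}_{0,B}$. Your treatment of the strata and containment steps is in fact more thorough than the paper's, which simply asserts that $\op{LMod}_f(\Delta)$ is connected and, ``by the description of the morphism on the level of points'', is a connected component of $\pi_B^{-1}(\mathscr{T}_\Delta)$.

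There is, however, a genuine gap in your first paragraph. You argue that Hinich--Vaintrob gives a normal compactification of $\mathcal{H}_f$, that $\overline{\mathcal{H}}_f$ is another normal compactification of $\mathcal{H}_f$, and conclude that the interior isomorphism ``extends uniquely''. But two normal compactifications of the same open variety need not be isomorphic (e.g.\ $\mathbb{P}^2$ and $\mathbb{P}^1\times\mathbb{P}^1$ both compactify $\mathbb{A}^2$), and density alone only gives uniqueness \emph{conditional on existence} of an extension. What is missing is a morphism between the two compactifications. The paper supplies exactly this: it cites the explicit morphism $\nu_\rho: \overline{\mathcal{T}}_{0,B}/\op{LMod}_f \to \overline{\mathcal{H}}^{\text{HM}}$ of \cite[\S8.1.1]{hinich2010augmented} (sending a nodal complex structure to the admissible cover obtained by pulling back on each component), observes that this map exhibits the quotient as the normalisation of $\overline{\mathcal{H}}^{\text{HM}}$, and then invokes uniqueness of normalisation to identify it with $\overline{\mathcal{H}}_f$. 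You could repair your argument along the same lines, or alternatively by noting that both spaces are finite over $\overline{\mathcal{M}}_{0,B}$ extending the same map on the interior, so both are the relative normalisation of $\overline{\mathcal{M}}_{0,B}$ in $\mathcal{H}_f$; either way, the explicit map (and not just abstract normality) is the missing ingredient.
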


\begin{proof}
\begin{sloppypar}
By \cite[Theorem 6.1.1]{hinich2010augmented}, the quotient $\overline{\mathcal{T}}_{0,B} / \op{LMod}_f$ has the structure of a complex orbifold. In Section 8.1.1, they describe a morphism of coarse complex analytic spaces ${\nu_{\rho}: \overline{\mathcal{T}}_{0,B} / \op{LMod}_f \rightarrow \overline{\mathcal{H}}^\text{HM}}$ to the Harris-Mumford space of admissible covers. On the level of points, the morphism takes a nodal complex structure to an admissible cover by pulling back complex structures on each component. This map realises $\overline{\mathcal{T}}_{0,B} / \op{LMod}_f$ as the normalisation of the Harris-Mumford space of admissible covers, and by uniqueness of the normalisation, $\overline{\mathcal{T}}_{0,B} / \op{LMod}_f \cong \overline{\mathcal{H}}_f$ as complex analytic spaces.

We write $\op{LMod}_f(\Delta)$ for the image of the stratum $\Delta$ of $\overline{\mathcal{T}}_{0,B}$ under the quotient map. $\op{LMod}_f(\Delta)$ is connected, as the image of a connected set under a continuous map, and by the description of the morphism on the level of points, $\op{LMod}(\Delta)$ is a connected component of the preimage under $\pi_B$ of the stratum $\mathscr{T}_\Delta$. Thus, $\op{LMod}_f(\Delta)$ is an irreducible stratum of $\overline{\mathcal{T}}_{0,B} / \op{LMod}_f \cong \overline{\mathcal{H}}_f$. 
\end{sloppypar}
\end{proof}
\end{subsection}
\end{section}

\newpage
\begin{section}{Embedded stable trees}\label{section:embedded stable trees}
The purpose of this section is to give the topological set-up of embedded stable trees, dual multicurves and braiding that we will use later on.

\begin{subsection}{Embedded stable trees} Let $B$ be a finite set of points in $S^2$ with $|B| \geq 3$, and let $\mathscr{T}$ be a $B$-marked stable tree. An \textit{embedding} of $\mathscr{T}$ into $(S^2,B)$ maps the endpoint of the leg $b$ to the point $b \in S^2$. We consider embeddings of stable trees up to isotopy in $(S^2,B)$: two embeddings of $\mathscr{T}$ are \textit{isotopic} if they are isotopic (as maps) through embeddings of $\mathscr{T}$.

\begin{remark}\label{remark:ambient isotopy}
If two embeddings $\iota, \iota': \mathscr{T} \hookrightarrow S^2$ are isotopic, then there is actually an ambient isotopy of the sphere $I_t: S^2 \rightarrow S^2$ that fixes $B$ throughout and takes one embedding to the other, in that $I_0 = \op{Id}$ and $I_1 \circ \iota = \iota'$. This follows from the Alexander trick. We will use this observation to `carry along' topological data associated to an embedding (such as a dual multicurve, defined below) to an isotopic embedding. 
\end{remark}
\end{subsection}

\begin{subsection}{Embedded stable trees notation} 
We often refer to an embedding $\iota: \mathscr{T} \hookrightarrow S^2$ by just its image $T$, suppressing the map. A vertex, edge, leg or half-edge of $T$ is the image of a vertex, edge, leg or half-edge, respectively, of $\mathscr{T}$. We often use the same symbol for corresponding features of $\mathscr{T}$ and $T$ when there is no risk of confusion.

Let $v$ be a vertex of $T$. Suppose $h_1, \ldots, h_k$ are the half-edges incident with $v$, appearing in that order going anticlockwise. We record their cyclic order in a cycle $(h_1 \, h_2 \, \ldots \, h_k)$. Putting all such cycles together -- one for each vertex of $T$ -- we obtain a permutation of the set of half-edges $H(T)$ which we denote by $\mathit{ord}_T \in S_{H(T)}$. 
\end{subsection}

\begin{subsection}{Dual multicurves}\label{subsubsec:dual multicurves}
Suppose $T$ is an embedding of a $B$-marked stable tree in $(S^2,B)$, with $e$ an (open) edge. Let $\delta_e$ be a simple closed curve that intersects $T$ exactly once, along $e$, with both components of $S^2 \smallsetminus \delta_e$ containing one of the two resulting subtrees. The curve $\delta_e$ is unique up to isotopy $\op{rel. } B$. The \textit{dual multicurve}\footnote{This is an abuse of notation because we need to make choices of $\delta_e$, but as we will only consider the dual multicurve up to isotopy, these choices don't matter.} to $T$ is 
$$\Delta_T = \left\{\delta_e \mid e \in E(T)\right\}.$$
By induction, these curves can be chosen to be disjoint. They are essential because each component of $S^2 \smallsetminus \delta_e$ contains at least two points of $B$, and they are non-homotopic because (again by induction) each $\delta_e$ partitions $B$ differently. Figure \ref{fig:dual multicurve example} shows an example. If $T$ and $T'$ are isotopic, then $\Delta_T$ and $\Delta_{T'}$ are isotopic -- an ambient isotopy (Remark \ref{remark:ambient isotopy}) carries a choice of $\Delta_T$ to a choice of $\Delta_{T'}$.

\begin{figure}[ht!]
\centering
\includegraphics[scale=1]{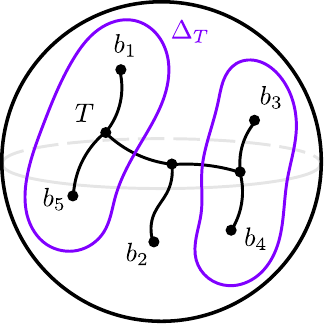}
\caption{An embedding $T$ of a $B$-marked stable tree with its dual multicurve $\Delta_T$.}
\label{fig:dual multicurve example}
\end{figure}

\end{subsection}

\begin{subsection}{Braiding embedded stable trees}\label{subsection:braiding embedded stable trees}
There are operations on the isotopy classes of embeddings of $\mathscr{T}$ that we call \textit{braid moves}. A braid move changes how just one half-edge $h$ of $\mathscr{T}$ is embedded, shifting it one step clockwise or anticlockwise in the cyclic order of the half-edges around $\op{base}(h)$.

To streamline the next definition, we define some special loops: if $h$ is a half-edge of an embedding $T$, then we let $\gamma_h$ be a simple loop based at $\op{base}(h)$ that bounds (on its left) the subtree of $T \smallsetminus \op{base}(h)$ containing $h$, with $\gamma_h$ intersecting $T$ only at $\op{base}(h)$. The loop $\gamma_h$ is unique up to isotopy $\op{rel. } B$. Note that if $h$ is a leg, then $\gamma_h$ is a keyhole loop. These loops will reappear in Section \ref{section:decorations from embedded stable trees}.

\begin{definition}
Let $T$ be an embedding of a $B$-marked stable tree $\mathscr{T}$ in $(S^2,B)$, and let $h$ be a half-edge of $T$. Suppose $\widetilde{h} = \mathit{ord}_T(h)$ is the next half-edge anticlockwise around $\op{base}(h)$. Following $\gamma_{\widetilde{h}}^{-1}$ and then $h$ gives a path whose interior intersects $T$ once, at $\op{base}(h)$. Nudge this path by an isotopy to remove this intersection, and call the resulting curve $k$. Let $T'$ be the embedding of $\mathscr{T}$ obtained by replacing $h$ with $k$. We say that $T'$ differs from $T$ by an \textit{(anticlockwise) braid move} at $h$.
\end{definition}

An anticlockwise braid move at $h$ changes the cyclic ordering of the half-edges around $\op{base}(h)$: the half-edge $h$ gets shifted one step anticlockwise. This translates to a conjugation of $\mathit{ord}_T$ by a transposition:
$$\mathit{ord}_{T'} = (h\,\, \widetilde{h}) \circ \mathit{ord}_T \circ (h \,\, \widetilde{h})^{-1},$$
where $\widetilde{h} = \mathit{ord}(h)$ is the next half-edge anticlockwise. We define \textit{clockwise} braid moves similarly, and we refer to a clockwise or anticlockwise braid move simply as a \textit{braid move}.

Braid moves are well-defined operations on the isotopy classes of embeddings of a stable tree. Some examples of anticlockwise braid moves are shown in Figure \ref{fig:braid moves}. \textit{Braiding} an embedding means applying a sequence of braid moves. The next proposition says that braiding an embedding does not change the dual multicurve.

\begin{figure}[hbtp]
\centering
\includegraphics[scale=1]{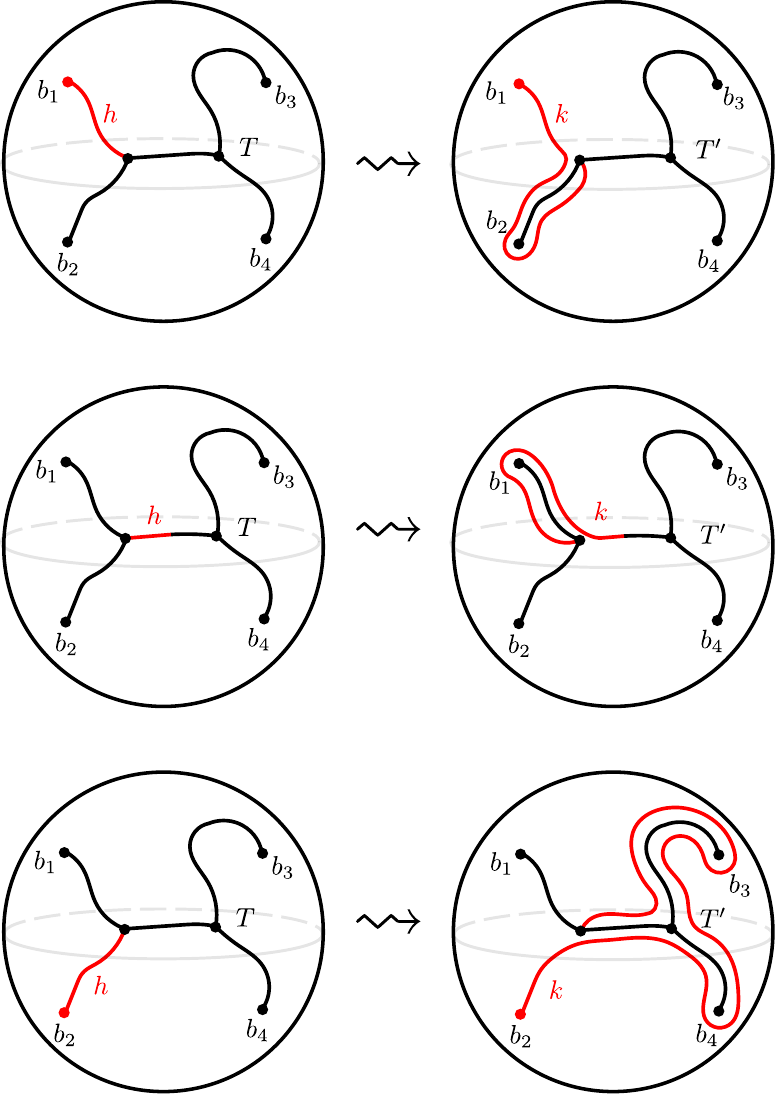}
\caption{Examples of anticlockwise braid moves on an embedding $T$ at the half-edges shown to produce new embeddings.}
\label{fig:braid moves}
\end{figure}

\begin{proposition}
Let $T$ be an embedding of a $B$-marked stable tree in $(S^2,B)$. Suppose $T'$ differs from $T$ by a braid move. Then $\Delta_T$ and $\Delta_{T'}$ are isotopic. 
\end{proposition}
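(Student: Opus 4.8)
The plan is to prove the conclusion componentwise. Since a braid move does not alter the underlying tree $\mathscr{T}$, its edge set $E(\mathscr{T})$, or the labelling, and since isotopy of multicurves only requires a bijection carrying each curve to an isotopic one, it suffices to fix an edge $e$ of $\mathscr{T}$ and produce a single simple closed curve that is simultaneously a dual curve for $e$ with respect to $T$ and with respect to $T'$. Because the dual curve along a fixed edge is unique up to isotopy $\rel B$, exhibiting such a common representative immediately shows that the two dual curves agree up to isotopy, and ranging over all edges finishes the proof.

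Set $v = \op{base}(h)$ and $\widetilde h = \mathit{ord}_T(h)$, and recall that the (anticlockwise) braid move replaces the half-edge $h$ by the curve $k$ built from $\gamma_{\widetilde h}^{-1}$ followed by $h$, then nudged to be embedded; every other feature of $T$, and in particular every leg endpoint (hence every point of $B$), is left fixed. Write $S_{\widetilde h}$ for the subtree of $T$ bounded by $\gamma_{\widetilde h}$, and, when they exist, let $e_h$ and $e_{\widetilde h}$ be the edges carrying $h$ and $\widetilde h$, and let $S_h$ be the subtree on the far side of $h$. The geometric fact I would exploit is that $k$ follows $\gamma_{\widetilde h}^{-1}$, which encircles $S_{\widetilde h}$ and meets $T$ only at $v$; thus $k$ can be taken to run outside a tight regular neighbourhood of $S_{\widetilde h}$ and to terminate at the midpoint of $e_h$ without entering the far side of $e_h$.

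For a fixed edge $e$ I would take a \emph{tight} representative of $\delta_e$, namely the boundary of a regular neighbourhood of whichever subtree of $T \smallsetminus e$ I choose, crossing $e$ transversally in one point placed off the half-edge $h$, and then check that it is disjoint from $k$. If $e$ is neither $e_h$ nor $e_{\widetilde h}$, then $S_h$ and $S_{\widetilde h}$ both hang off the single vertex $v$ and so lie on one side of $\delta_e$, whence the entire modified region is on that side and $\delta_e$ is untouched. If $e = e_{\widetilde h}$, a tight $\delta_{e_{\widetilde h}}$ sits inside $\gamma_{\widetilde h}$, so the excursion of $k$ along $\gamma_{\widetilde h}^{-1}$ stays outside it. If $e = e_h$, crossing $e_h$ in its far half and bounding a neighbourhood of $S_h$ keeps $\delta_{e_h}$ clear of $k$, which lives on the $v$- and $S_{\widetilde h}$-sides and ends at the midpoint of $e_h$. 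In every case $\delta_e \cap T' = \delta_e \cap (T \smallsetminus h)$ is the single transverse point on $e$, and since $B$ and the combinatorics of $\mathscr{T}$ are preserved the two sides of $\delta_e$ still separate the two subtrees of $T' \smallsetminus e$; hence $\delta_e$ is a dual curve for $e$ with respect to $T'$, as required. The clockwise case is identical after reversing orientations.

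I expect the only delicate point to be the two edges $e_h$ and $e_{\widetilde h}$ incident to the braiding vertex $v$. There the new half-edge $k$ genuinely wraps around $S_{\widetilde h}$, and one must check -- using the precise position of $\gamma_{\widetilde h}$ relative to the tight dual curves, and that the nudge removing the self-intersection at $v$ can be performed on the correct side -- that this wrapping does not force either dual curve to change isotopy class. Everywhere else the braid is supported away from the dual curve and the verification is immediate.
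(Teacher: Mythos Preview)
Your proposal is correct and follows essentially the same strategy as the paper: exhibit a single multicurve that serves as the dual multicurve for both $T$ and $T'$. The paper compresses your edge-by-edge case analysis into one observation --- that the loop $\gamma_{\widetilde h}$ used in the braid move can be chosen disjoint from $\Delta_T$, so $\Delta_T$ itself is a common choice of dual multicurve for $T$ and $T'$ --- but the underlying idea is the same.
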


\begin{proof}
The loop $\gamma_h$ in the definition of a braid move can be chosen disjoint from $\Delta_T$, so that $\Delta_T$ is a common choice of dual multicurve for both $T$ and $T'$. 
\end{proof}

Conversely, if two embeddings $T$ and $T'$ both have the same dual multicurve, then one can be braided to give the other.

\begin{proposition}\label{prop:dual trees connected by simple moves}
Suppose $T$ and $T'$ are embeddings of a $B$-marked stable tree $\mathscr{T}$ in $(S^2,B)$ such that $\Delta_T$ and $\Delta_{T'}$ are isotopic. Then there exists a sequence of embeddings of $\mathscr{T}$
$$T=T_0,\,\, T_1,\,\, \ldots,\,\, T_N$$
such that $T_i$ differs from $T_{i-1}$ by a braid move for $i=1, \ldots, N$, and $T_N$ is isotopic to $T'$.
\end{proposition}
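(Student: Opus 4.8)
The plan is to prove this by induction on the number of edges of $\mathscr{T}$, using the dual multicurve $\Delta = \Delta_T = \Delta_{T'}$ (fixed up to isotopy) as the scaffolding that both embeddings share. The key observation is that since $T$ and $T'$ have the same dual multicurve, the complement $S^2 \smallsetminus \Delta$ has the same set of connected components for both, and each vertex of $\mathscr{T}$ lives in a designated component. First I would handle the \emph{base case} where $\mathscr{T}$ has a single vertex $v$ and only legs: here $\Delta = \emptyset$, and the content reduces to showing that any two embeddings of a star-shaped tree (all legs meeting at $v$, with endpoints fixed at the points of $B$) are related by braid moves. Two such embeddings differ only in the cyclic orders $\mathit{ord}_T$ and $\mathit{ord}_{T'}$ of the legs around $v$ \emph{and} in how each leg winds through the sphere; braid moves generate all of these differences because a sequence of braid moves realises an arbitrary permutation of the cyclic order (transpositions of adjacent half-edges generate the symmetric group) while simultaneously performing the correct winding, which is exactly what the half-twist around $\gamma_{\widetilde h}$ encodes.

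For the \emph{inductive step}, I would pick an edge $e$ of $\mathscr{T}$, with dual curve $\delta_e \in \Delta$ cutting $S^2$ into two disks $D_1, D_2$, each containing a subtree. Both $T$ and $T'$, being dual to $\Delta$, cross $\delta_e$ exactly once along the edge $e$; so I can cut along $\delta_e$ and regard $T \cap \overline{D_i}$ and $T' \cap \overline{D_i}$ as embedded trees in a disk. The plan is to contract the edge $e$ (equivalently, collapse $\delta_e$) to obtain embeddings $\overline T, \overline T'$ of the smaller tree $\mathscr{T}/e$ with one fewer edge, whose common dual multicurve is $\Delta \smallsetminus \{\delta_e\}$. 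By the inductive hypothesis, $\overline T$ and $\overline T'$ are connected by braid moves. I would then lift this sequence of braid moves back to the uncontracted setting: a braid move on $\mathscr{T}/e$ that does not involve the half-edges at the contracted vertex lifts directly, while a braid move involving the merged vertex lifts to a braid move at one of the two half-edges of $e$ together with a correction. After applying these lifted moves, $T$ and $T'$ are reduced to embeddings that agree away from a small neighbourhood of $\delta_e$ and differ only in the local picture near the single edge $e$ — a situation handled by finitely many braid moves at the two half-edges of $e$ (a relative version of the base case on the two disks).

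The \textbf{main obstacle} I anticipate is the lifting step: ensuring that a braid-move sequence on the contracted tree lifts to a genuine braid-move sequence on $\mathscr{T}$ without introducing spurious intersections with $\delta_e$ or altering the isotopy class of the dual multicurve. The subtlety is that braid moves are defined in terms of the loops $\gamma_h$, which depend on the ambient embedding, so a loop used on $\mathscr{T}/e$ must be shown to pull back to an admissible loop on $\mathscr{T}$ disjoint from $\delta_e$; this is where one must argue carefully that the relevant $\gamma_h$ can be chosen inside a single complementary disk $D_i$. A clean way to organise this — and likely the route the author takes — is to phrase everything in terms of the action of the pure mapping class group $\op{PMod}_{0,B}$ (or its analogue for the cut surface, a surface braid group), observing that braid moves generate the stabiliser of $\Delta$ modulo the part of the mapping class group that already fixes the embedding, and then invoking a generation statement for that stabiliser; the induction above is really a hands-on proof of such a generation result. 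Given the footnote comparing the argument to the connectivity of the complex of trees, I expect the actual proof is deferred to the appendix (\S\ref{sec:appendix}) and proceeds by a careful topological induction of exactly this flavour.
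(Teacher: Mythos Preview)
Your base case and the paper's Case 1 coincide in spirit, but your argument there is too soft: it is not enough that adjacent transpositions generate the symmetric group, because two one-vertex embeddings with the \emph{same} cyclic order need not be isotopic. The actual content is that the half-twists $H_{\alpha_b}$ about arcs joining adjacent legs generate $\op{Mod}_{0,B}$, and that applying such a half-twist to $T$ is literally a braid move. The paper then uses a conjugation trick to convert a word $\overline H_N\cdots\overline H_1$ in $A(T)$ into a sequence $H_N,\ldots,H_1$ with $H_i\in A(T_{i-1})$; without this, you do not get a chain where each step is a braid move on the \emph{current} tree.

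For the inductive step, the lifting problem you flag is a genuine gap, not just a technicality. When $h$ lies at the merged vertex of $\mathscr{T}/e$ and $\mathit{ord}(h)$ came from the \emph{other} side of $\delta_e$, the loop $\gamma_{\mathit{ord}(h)}$ used by the braid move on $\mathscr{T}/e$ bounds a subtree straddling both disks; in the uncontracted picture it must cross $\delta_e$, so there is no single braid move (or ``braid move plus correction'') on $\mathscr{T}$ that realises it. Untangling this requires exactly the Dehn-twist bookkeeping you are trying to avoid.

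The paper sidesteps this by \emph{not} inducting edge-by-edge. It pinches \emph{all} of $\Delta$ at once to a tree of spheres $C$; on each component sphere the restrictions of $q(T)$ and $q(T')$ are one-vertex trees, and Case~1 connects them by braid moves, which lift verbatim since each such move happens inside a single complementary region of $\Delta$. Call the result $T''$. The remaining discrepancy between $T''$ and $T'$ is then isolated in two lemmas: (i) if $q(T'')$ and $q(T')$ are isotopic in $C$, then $T''$ and $T'$ differ only by a product of Dehn twists about curves of $\Delta$; and (ii) a Dehn twist about $\delta_e$ applied to an embedding dual to $\Delta$ is itself a sequence of braid moves (namely $k-1$ braid moves at a half-edge of $e$, where $k$ is the valence). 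Your final speculation about the stabiliser of $\Delta$ in the mapping class group is close in spirit to (i)+(ii), but the paper's argument is more concrete than a generation statement: it is a direct cut--pinch--lift construction.
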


This proof is longer and more technical, so we relegate it to Appendix \ref{sec:appendix}.
\end{subsection}

\begin{subsection}{Contraction of embedded stable trees}
Suppose $T$ is an embedding and $e$ is an edge that is incident with a vertex $v$. A \textit{contraction along $e$ fixing $v$} is an embedding $T'$ obtained from $T$ by contracting in the edge $e$ towards $v$, keeping $v$ fixed, as shown in Figure \ref{fig:contraction of embedded stable trees}. A sequence of such contractions is called a \textit{contraction} of $T$.

\begin{figure}[ht!]
\centering
\includegraphics[scale=1]{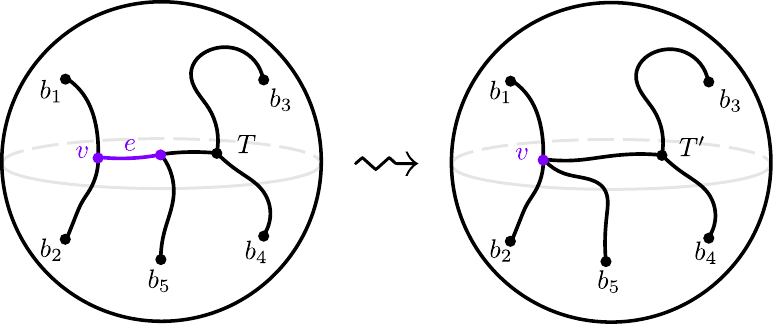}
\caption{Contracting $T$ along the edge $e$, keeping $v$ fixed, produces the embedding $T'$.}
\label{fig:contraction of embedded stable trees}
\end{figure}

\end{subsection}
\end{section}

\newpage
\begin{section}{Decorated trees}\label{sec:decorated trees}
\begin{sloppypar}
In this section, we define decorated trees. These are the combinatorial objects that we will use to describe the strata of compactified Hurwitz spaces. Throughout, let ${P = (g, d, B, A, \phi : A \rightarrow B, \op{br}, \op{rm})}$ be a portrait.
\end{sloppypar}

We write $H(\mathscr{T})$ for the set of half-edges of a $B$-marked stable tree $\mathscr{T}$, and we often simply write $b$ for the leg labelled $b\in B$.

\begin{subsection}{Decorations} 
\begin{sloppypar}
We should think of a decoration of $\mathscr{T}$ as modelling certain monodromy information that comes from embedding $\mathscr{T}$ into the target sphere of a branched cover ${f:(X,A) \rightarrow (S^2,B)}$. In particular, an embedding of $\mathscr{T}$ gives a cyclic ordering of the half-edges around each vertex, and picks out certain loops (the $\gamma_h$ from Section \ref{subsection:braiding embedded stable trees}) whose monodromy permutations we'd like to keep track of. We'll see this encoding in action in Section \ref{subsec:the decoration construction}.
\end{sloppypar}

\begin{definition}\label{def:decoration} 
Let $\mathscr{T}$ be a $B$-marked stable tree. A \textit{$P$-decoration} of $\mathscr{T}$ is given by the data $(\mathit{ord}, \mathit{mon}, \mathit{cyc})$ where
\begin{enumerate}
\item $\mathit{ord}$ is a permutation of the set $H(\mathscr{T})$ of half-edges, with one disjoint cycle for each vertex $v$ that contains all the half-edges incident with $v$
\item $\mathit{mon}$ is an assignment
$$\mathit{mon}:H(\mathscr{T}) \longrightarrow S_d$$
of a permutation of $S_d$ to each half-edge of $\mathscr{T}$
\item $\mathit{cyc}$ is an injection
$$\mathit{cyc}: A \longrightarrow \bigsqcup_{b\in B} \op{Cycles}\left(\mathit{mon}(b)\right)$$
from $A$ to the disjoint union of cycles of the leg permutations
\end{enumerate}
satisfying the conditions
\begin{enumerate}[label={(\roman*.)}]
\item for each cycle $(h_1 \, h_2 \, \ldots \, h_k)$ of $\mathit{ord}$,
$$\mathit{mon}(h_k)\ldots \mathit{mon}(h_2)\mathit{mon}(h_1) = \op{Id} \in S_d$$
\item for each edge, the permutations on its two half-edges are inverses
\item for each $b \in B$, the leg permutation $\mathit{mon}(b)$ has cycle type $\op{br}(b)$
\item for each $a \in A$, $\mathit{cyc}(a)$ is a cycle of $\mathit{mon}\left({\phi(a)}\right)$ of length $\op{rm}(a)$
\item the collection of leg permutations $\left\{\mathit{mon}(b)\right\}_{b \in B}$ is transitive
\end{enumerate}
A $B$-marked stable tree $\mathscr{T}$ together with a $P$-decoration is called a \textit{$P$-decorated tree}, or simply a \textit{decorated tree} if the portrait is understood. The set of $P$-decorated trees is denoted by $\op{Dec}_{0,B}(P)$.
\end{definition}

\begin{remark}\label{remark:decoration on the legs defines the rest} ~
\begin{enumerate}[label=(\arabic*)]
\item The purpose of condition (i.) is to ensure that the (anticlockwise) cyclic product of the permutations around each vertex is the identity.
\item A decoration $(\mathit{ord}, \mathit{mon}, \mathit{cyc})$ is determined by $\mathit{ord}$, $\mathit{mon}|_B$ and $\mathit{cyc}$, because once we know the permutations on the legs, we can `propagate in' using conditions (i) and (ii) to determine the permutations on the edges. 
\end{enumerate}
\end{remark}

\begin{example} Let $P$ be the degree-$3$ portrait from Section \ref{subsec:portraits}:
\[
\begin{tikzcd}[column sep=small, row sep=12mm, column sep=2.7mm, ampersand replacement=\&]
\overset{\displaystyle a_2}{\critpt}\arrow[dr,"\scriptstyle 2" left, line width=0.5mm] 
\& \&
\stdpt\arrow[dl,"\scriptstyle 1" right, line width=0.5mm]
\& \& ~\&
\overset{\displaystyle a_3}{\critpt}\arrow[dr, "\scriptstyle 2" left, line width=0.5mm] 
\& \&
\stdpt\arrow[dl,"\scriptstyle 1" right, line width=0.5mm]
\& \& ~\&
\critpt\arrow[dr,"\scriptstyle 2" left, line width=0.5mm]
\& \&
\overset{\displaystyle a_4}{\stdpt}\arrow[dl,"\scriptstyle 1" right, line width=0.5mm]
\& \& ~\&
\critpt\arrow[dr,"\scriptstyle 2" left, line width=0.5mm]
\& \&
\overset{\displaystyle a_1}{\stdpt}\arrow[dl,"\scriptstyle 1" right, line width=0.5mm]
\\
\&
\underset{\displaystyle b_1}{\stdpt} 
\& \& \& \& \&
\underset{\displaystyle b_2}{\stdpt} 
\& \& \& \& \&
\underset{\displaystyle b_3}{\stdpt} 
\& \& \& \& \&
\underset{\displaystyle b_4}{\stdpt} 
\&
\end{tikzcd}
\]
Decorated trees are conveniently presented by diagrams such as Figure \ref{fig:decoration first example}. Letting $h$ and $h'$ be the half-edges (left-to-right) of the central edge,
\begin{enumerate}
\item $\mathit{ord} = (h\, b_1 \, b_2)(h' \, b_3 \, b_4)$
\item ${\mathit{mon}: b_1, b_2, b_3, b_4 \mapsto (1\,2)(3),\, (2\,3)(1),\, (1\,2)(3),\, (1\,3)(2)}$ and 
\newline $\mathit{mon}: h, h' \mapsto (1\,2\,3),\, (1\,3\,2).$
\item $\mathit{cyc}: a_1, a_2, a_3, a_4 \mapsto (2), (1\,2), (2\,3), (3)$ in $\mathit{mon}(b_4), \mathit{mon}(b_1), \mathit{mon}(b_2), \mathit{mon}(b_3)$ respectively
\end{enumerate} 
Conditions (i.)-(v.) are satisfied.

\begin{figure}[ht!]
\centering
\includegraphics[scale=1]{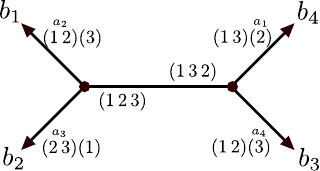}
\caption{A $P$-decoration of a stable tree.}
\label{fig:decoration first example}
\end{figure}

\end{example}
\end{subsection}

\begin{subsection}{Contraction of decorations} 
\begin{sloppypar} 
Contracting an edge (with half-edges $h$, $h'$, say) of a $B$-marked stable tree $\mathscr{T}$ yields another $B$-marked stable tree, $\mathscr{T}'$. A $P$-decoration ${(\mathit{ord}, \mathit{mon}, \mathit{cyc})}$ of $\mathscr{T}$ induces a $P$-decoration ${(\mathit{ord}', \mathit{mon}', \mathit{cyc}')}$ of $\mathscr{T}'$ in the obvious way: ${\mathit{mon}'(k) = \mathit{mon}(k)}$ for all ${k \in H(\mathscr{T}')}$, ${\mathit{cyc}'(a) = \mathit{cyc}(a)}$ for all ${a \in A}$, and if 
$$\mathit{ord} = \ldots(h\,\, h_1\, \ldots \, h_k)(h' \,\, h'_1 \, \ldots \, h'_l)\ldots$$
(written with $h$ and $h'$ starting their respective cycles) then 
$$\mathit{ord}' = \ldots(h_1\, \ldots \, h_k \,\, h'_1 \, \ldots \, h'_l)\ldots.$$
This is shown for our example in Figure \ref{fig:contracting decoration first example}. Conditions (i.)-(v.) are satisifed by ${(\mathit{ord}', \mathit{mon}', \mathit{cyc}')}$. We call the new decorated tree an \textit{edge contraction} of the original decorated tree. More generally, a decorated tree obtained by a sequence of edge contractions is called a \textit{contraction} of the original.
\end{sloppypar}

\begin{figure}[ht!]
\centering
\includegraphics[scale=1]{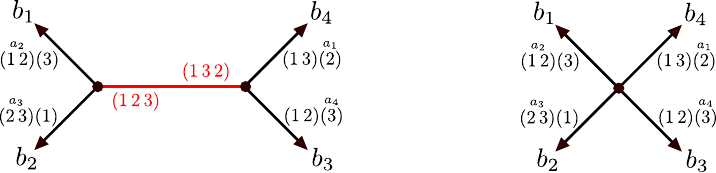}
\caption{Contracting the red edge induces a decoration of the resulting stable tree.}
\label{fig:contracting decoration first example}
\end{figure}
\end{subsection}

\begin{subsection}{Hurwitz equivalence of decorations}
\begin{sloppypar}
There are two types of operations on \mbox{$P$-decorated} trees that determine the correct notion of equivalence. The first are \textit{global conjugations}, which capture the idea that relabelling the permutations of a $P$-decorated tree by some $\tau \in S_d$ should yield an equivalent $P$-decorated tree. The second are \textit{braid moves (of decorated trees)}, which are more subtle and will be justified when we examine the interplay between the combinatorics and the topology of branched covers. We say that two $P$-decorated trees are \textit{Hurwitz equivalent} if one can be transformed to the other by a sequence of global conjugations and braid moves. 
\end{sloppypar}

\begin{definition} Let $(\mathit{ord}, \mathit{mon}, \mathit{cyc})$ be a $P$-decoration of $\mathscr{T}$, and let $\tau \in S_d$. Define a new decoration $(\mathit{ord}', \mathit{mon}', \mathit{cyc}')$ of $\mathscr{T}$ as follows.
\begin{enumerate}
\item $\mathit{ord}' = \mathit{ord}$
\item for all $h \in H(\mathscr{T})$, 
$$\mathit{mon}'(h) = \tau \circ \mathit{mon}(h) \circ \tau^{-1}$$
\item for all $a \in A$, 
$$\mathit{cyc}'(a) = \tau \circ \mathit{cyc}(a) \circ \tau^{-1}$$
\end{enumerate}
We say that $(\mathit{ord}', \mathit{mon}', \mathit{cyc}')$ is obtained from $(\mathit{ord}, \mathit{mon}, \mathit{cyc})$ by a \textit{global conjugation} by $\tau$, and that these decorations are \textit{globally conjugate}.
\end{definition}

The new decoration satisfies conditions (i.)-(v.) and is thus a $P$-decoration of $\mathscr{T}$. A global conjugation is shown in Figure \ref{fig:globally conjugate decorations example}, where $P$ is the portrait in our running example and $\tau = (1\,2\,3)$.

\begin{figure}[ht!]
\centering
\includegraphics[scale=1]{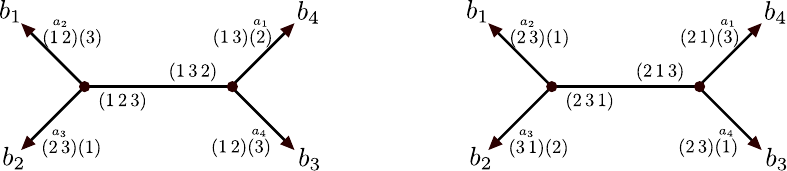}
\caption{Two decorations that are globally conjugate by $\tau = (1\,2\,3)$.}
\label{fig:globally conjugate decorations example}
\end{figure}

The second type of operations -- braid moves of decorated trees -- allow us to accommodate decorations that have different cyclic orders. Braid moves of decorated trees encode the effect that braid moves of \textit{embedded} trees have on certain monodromy permutations. We will make this precise in Section \ref{subsec:effect of braiding T on decoration}.

\begin{definition}
Let $(\mathit{ord}, \mathit{mon}, \mathit{cyc})$ be a $P$-decoration of $\mathscr{T}$, and let $h$ be a half-edge. Let $\widetilde{h} = \mathit{ord}(h)$ be the next half-edge anticlockwise. Define a new decoration $(\mathit{ord}', \mathit{mon}', \mathit{cyc}')$ of $\mathscr{T}$ as follows.
\begin{enumerate}
\item $\mathit{ord}' = (h \,\, \widetilde{h}) \circ \mathit{ord} \circ (h \,\, \widetilde{h})^{-1}$
\item for all half-edges $k$ in the component of $\mathscr{T} \smallsetminus \{\op{base}(h)\}$ containing $h$, 
$$\mathit{mon}'(k) = \mathit{mon}(\widetilde{h}) \circ \mathit{mon}(k) \circ \mathit{mon}(\widetilde{h})^{-1}$$
and for all other half-edges $k$, 
$$\mathit{mon}'(k) = \mathit{mon}(k)$$
\item for all $a \in A$ so that ${\phi(a)}$ is a leg in the component of $\mathscr{T} \smallsetminus \{\op{base}(h)\}$ containing $h$, 
$$\mathit{cyc}'(a) = \mathit{mon}(\widetilde{h}) \circ \mathit{cyc}(a) \circ \mathit{mon}(\widetilde{h})^{-1},$$
and for all other $a \in A$, 
$$\mathit{cyc}'(a) = \mathit{cyc}(a)$$
\end{enumerate}
We say that $(\mathit{ord}', \mathit{mon}', \mathit{cyc}')$ is obtained from $(\mathit{ord}, \mathit{mon}, \mathit{cyc})$ by an \textit{(anticlockwise) braid move} of decorated trees at the half-edge $h$.
\end{definition}

The new data satisfies conditions (i.)-(v.) and is thus a $P$-decoration. In words, an anticlockwise braid move shifts $h$ one step anticlockwise through $\widetilde{h} = \mathit{ord}(h)$, and the price of this reordering is that all the permutations on the shifted subtree get conjugated by $\mathit{mon}(\widetilde{h})$.

We define clockwise braid moves of decorated trees similarly: a clockwise braid move shifts $h$ one step clockwise through $\widetilde{h} = \mathit{ord}^{-1}(h)$, and all the permutations on the shifted subtree pick up a conjugation by $\mathit{mon}(\widetilde{h})^{-1}$. A clockwise or anticlockwise braid move is simply a \textit{braid move}.

A braid move for our running-example portrait is shown in Figure \ref{fig:simple reordering first example}, and a bigger example (for a slightly modified portrait with $|B|=5$) is shown in Figure \ref{fig:simple reordering second example}.

\begin{figure}[ht!]
\centering
\includegraphics[scale=1]{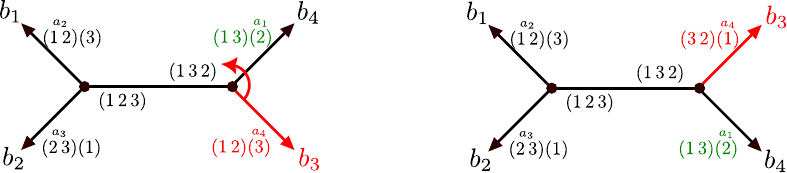}
\caption{An anticlockwise braid move at $h=b_3$ through $\widetilde{h}=b_4$: $h$ gets shifted anticlockwise and the red permutations and $A$-markings get conjugated by $\mathit{mon}(\widetilde{h}) = (1\,3)(2)$.}
\label{fig:simple reordering first example}
\end{figure}

\begin{figure}[ht!]
\centering
\includegraphics[scale=1]{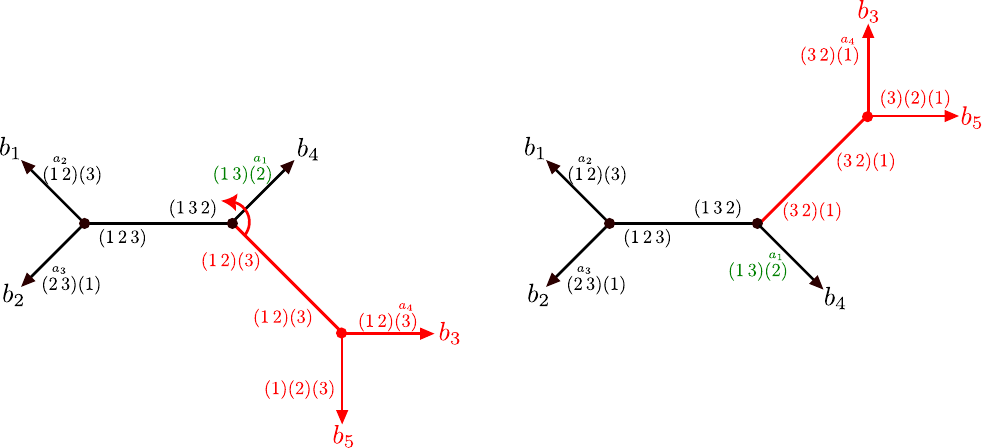}
\caption{A braid move at $h$ (the half-edge by the red anticlockwise arrow) through $\widetilde{h} = b_4$: $h$ gets shifted anticlockwise, and the red permutations and $A$-markings get conjugated by $\mathit{mon}(\widetilde{h}) = (1\,3)(2)$.}
\label{fig:simple reordering second example}
\end{figure}

\newpage
Combining global conjugations and braid moves gives us the right equivalence relation, which we call \textit{Hurwitz equivalence}.

\begin{definition}\label{def:hurwitz equivalence of decorated trees}
Two $P$-decorations of a stable tree $\mathscr{T}$ are \textit{Hurwitz equivalent} if one can be obtained from the other by a sequence of global conjugations and braid moves of decorated trees. We write 
$$\op{Stab}_{0,B}(P) = \op{Dec}_{0,B}(P) / \text{Hurwitz equivalence}$$
for the set of Hurwitz equivalence classes of $P$-decorated trees. 
\end{definition}

The next lemma tells us that contraction of $P$-decorated trees descends to a well-defined notion of contraction on Hurwitz equivalence classes of $P$-decorated trees.

\begin{lemma}\label{lemma:contraction of Heq classes is well defined}
\begin{sloppypar}
Let $\mathscr{T}$ be a stable tree that contracts along an edge $e$ to $\mathscr{T}'$. Suppose ${(\mathit{ord}_1, \mathit{mon}_1, \mathit{cyc}_1)}$ and $(\mathit{ord}_2, \mathit{mon}_2, \mathit{cyc}_2)$ are Hurwitz equivalent $P$-decorations of $\mathscr{T}$ that contract along $e$ to give the $P$-decorations $(\mathit{ord}_1', \mathit{mon}_1', \mathit{cyc}_1')$ and $(\mathit{ord}_2', \mathit{mon}_2', \mathit{cyc}_2')$ of $\mathscr{T}'$, respectively. Then $(\mathit{ord}_1', \mathit{mon}_1', \mathit{cyc}_1')$ and $(\mathit{ord}_2', \mathit{mon}_2', \mathit{cyc}_2')$ are Hurwitz equivalent. 
\end{sloppypar}
\end{lemma}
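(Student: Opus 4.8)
The plan is to reduce to the generating moves of Hurwitz equivalence and to check compatibility with contraction one move at a time. Since Hurwitz equivalence is generated by global conjugations and braid moves (Definition \ref{def:hurwitz equivalence of decorated trees}), I would write a chain $(\mathit{ord}_1, \mathit{mon}_1, \mathit{cyc}_1) = E_0, E_1, \ldots, E_m = (\mathit{ord}_2, \mathit{mon}_2, \mathit{cyc}_2)$ of $P$-decorations of $\mathscr{T}$ in which each step $E_{j-1} \to E_j$ is a single global conjugation or a single braid move. Each $E_j$ contracts along $e$ to a $P$-decoration $E_j'$ of $\mathscr{T}'$, and since Hurwitz equivalence of decorations of $\mathscr{T}'$ is transitive, it is enough to show that $E_{j-1}'$ and $E_j'$ are Hurwitz equivalent for each $j$. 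The global conjugation case is immediate: contraction only deletes the two half-edges of $e$ and merges two cycles of $\mathit{ord}$, neither of which interacts with a uniform conjugation of every value of $\mathit{mon}$ and $\mathit{cyc}$; so the contraction of a global conjugation by $\tau$ is the global conjugation by $\tau$ of the contracted decoration.

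For a braid move I would split into cases according to the position of the braided half-edge $h$ and its successor $\widetilde{h} = \mathit{ord}(h)$ relative to the two half-edges $h_e, h_e'$ of $e$. When $\{h, \widetilde{h}\}$ is disjoint from $\{h_e, h_e'\}$ the braid descends cleanly: the adjacency $\widetilde{h} = \mathit{ord}(h)$ survives contraction (contraction alters only the adjacencies involving $h_e$ or $h_e'$), the subtree conjugated by the braid contains $e$ entirely or not at all, and the conjugating permutation $\mathit{mon}(\widetilde{h})$ is unaffected by contraction. Hence $E_{j-1}' \to E_j'$ is a single braid move at $h$ on $\mathscr{T}'$.

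The boundary cases, where $h$ or $\widetilde{h}$ lies in $\{h_e, h_e'\}$, are the main obstacle. Consider the representative case of an anticlockwise braid at $h = h_e$, where $\widetilde{h} = \mathit{ord}(h_e) = h_1$ is the first half-edge of the cyclic order $(h_e\, h_1\, h_2 \cdots h_k)$ at the vertex $v = \op{base}(h_e)$; the braid conjugates by $\mathit{mon}(h_1)$ the entire component of $\mathscr{T} \smallsetminus \{v\}$ across $e$, which after contraction consists of exactly the half-edges inherited from the far endpoint $v'$ of $e$ together with everything beyond them, while fixing $h_1, \ldots, h_k$. I would reconstruct $E_j'$ from $E_{j-1}'$ by first performing a global conjugation by $\mathit{mon}(h_1)$, which conjugates the whole of $\mathscr{T}'$ and so over-conjugates the surviving half-edges $h_1, \ldots, h_k$ at $v$, and then performing clockwise braid moves at $h_2, h_3, \ldots, h_k$ in turn. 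Each such braid has $h_1$ as its predecessor, so it conjugates the corresponding subtree by $\mathit{mon}(h_1)^{-1}$ (undoing the over-conjugation, with $h_1$ itself fixed by conjugation by $\mathit{mon}(h_1)$) and shifts the half-edge past $h_1$; tracking the cyclic order shows that the final ordering is precisely $\mathit{ord}'$ of $E_j'$. The verification is thus a direct computation, whose structural input is condition (i.) of Definition \ref{def:decoration}.

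The remaining boundary cases --- $h = h_e'$, the subcases where $\widetilde{h} \in \{h_e, h_e'\}$ (in which the conjugating permutation is itself deleted and must first be rewritten as a word in the surviving permutations using conditions (i.) and (ii.)), and the clockwise variants --- are handled by the same mechanism after relabelling. The real work is in organizing these finitely many cases so that the bookkeeping is carried out once rather than repeated, and in confirming that the prescribed sequence of global conjugation and braid moves reproduces $E_j'$ on the nose.
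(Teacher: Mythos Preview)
Your proposal is correct and follows essentially the same strategy as the paper: reduce to a single generating move, handle global conjugations trivially, and for a braid move at $h$ split according to whether $h$ or $\widetilde h$ lies on the contracted edge. The paper's proof is a terse sketch of exactly this case analysis; in the boundary case it asserts that the contracted decorations differ by a sequence of braid moves at the new half-edges $h_1',\ldots,h_l'$ created by the contraction (no global conjugation needed), whereas you instead use a global conjugation by $\mathit{mon}(h_1)$ followed by clockwise braids at $h_2,\ldots,h_k$ to undo the over-conjugation --- a different but equally valid sequence of Hurwitz moves reaching the same endpoint. Your write-up is considerably more explicit than the paper's, which leaves the boundary-case computation to the reader.
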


\begin{proof}
\begin{sloppypar}
If $(\mathit{ord}_1, \mathit{mon}_1, \mathit{cyc}_1)$ and $(\mathit{ord}_2, \mathit{mon}_2, \mathit{cyc}_2)$ differ by a global conjugation by $\tau$, then $(\mathit{ord}_1', \mathit{mon}_1', \mathit{cyc}_1')$ and $(\mathit{ord}_2', \mathit{mon}_2', \mathit{cyc}_2')$ also differ by a global conjugation by $\tau$.

If $(\mathit{ord}_1, \mathit{mon}_1, \mathit{cyc}_1)$ and $(\mathit{ord}_2, \mathit{mon}_2, \mathit{cyc}_2)$ differ by a braid move at the half-edge $h$ (without loss of generality an \textit{anticlockwise} braid move), then the situation is slightly more complicated. If neither $h$ nor $\mathit{ord}_1(h)$ is in $e$, then $(\mathit{ord}_1', \mathit{mon}_1', \mathit{cyc}_1')$ and $(\mathit{ord}_2', \mathit{mon}_2', \mathit{cyc}_2')$ differ by a braid move at $h$. If $h$ or $\mathit{ord}_1(h)$ is in $e$, then $(\mathit{ord}_1', \mathit{mon}_1', \mathit{cyc}_1')$ and $(\mathit{ord}_2', \mathit{mon}_2', \mathit{cyc}_2')$ differ by a sequence of braid moves (involving each of the new half-edges at $\op{base}(h)$ created by contracting $e$).
\end{sloppypar}
\end{proof}
\end{subsection}
\end{section}

\newpage
\begin{section}{Decorated trees from embedded stable trees}\label{section:decorations from embedded stable trees}
In this section, we explain how to construct a decorated tree given a branched cover and an embedded tree. This construction requires us to make some choices, and the second half of the section will examine how these choices affect the decorated tree.

\begin{subsection}{The decoration construction}\label{subsec:the decoration construction}
Recall the definition of the loop $\gamma_h$ from Section \ref{section:embedded stable trees}: if $h$ is a half-edge of an embedding $T$, then $\gamma_h$ is a simple loop (unique up to isotopy $\op{rel. } B$) based at $\op{base}(h)$ that bounds the subtree of $T \smallsetminus \op{base}(h)$ containing $h$ on its left, with $\gamma_h$ intersecting $T$ only at $\op{base}(h)$. If $h$ is a leg, then $\gamma_h$ is a keyhole loop.

Recall also our convention that concatenation of path is written right-to-left, to match function composition.

\begin{definition}\label{cons:decoration of stable tree}
\begin{sloppypar}
Let ${f:(X,A) \rightarrow (S^2,B)}$ be a branched cover with $X$ connected, and let $\mathscr{T}$ be a $B$-marked stable tree. Choose an embedding $T$ of $\mathscr{T}$, a vertex $v$ of $T$, and a labelling ${l: f^{-1}(v) \rightarrow \{v_1, \ldots, v_d\}}$. Let ${\Sigma: \pi_1(S^2 \smallsetminus B, v) \rightarrow S_d}$ be the monodromy representation induced by the labelling. With these choices, we construct a $P(f)$-decoration 
$$D_f(T, v, l) = (\mathit{ord}, \mathit{mon}, \mathit{cyc})$$
of $\mathscr{T}$, where:
\begin{enumerate}
\item $\mathit{ord} = \mathit{ord}_T$
\item for $h \in H(\mathscr{T})$, let $\alpha_h$ be the unique shortest path in $T$ from $v$ to $\op{base}(h)$ and let ${\overline{\gamma}_h = \alpha_h^{-1} \concat \gamma_h \concat \alpha_h}$; set 
$$\mathit{mon}(h) = \Sigma(\overline{\gamma}_h)$$	
\item for $a \in A$, set
$$\mathit{cyc}(a) = \mathit{cyc}_{\overline{\gamma}_b}(a),$$
where $b = f(a)$ and $\mathit{cyc}_{\overline{\gamma}_b}$ is the canonical bijection of Proposition \ref{prop:keyhole loop monodromy}
\end{enumerate}
\end{sloppypar}
\end{definition}

An illustration of the construction is given in Figure \ref{fig:decoration from embedded tree}, with $P(f)$ our running-example portrait from Section \ref{subsec:portraits}.

\begin{lemma}
The construction in Definition \ref{cons:decoration of stable tree} yields a $P(f)$-decoration of $\mathscr{T}$. 
\end{lemma}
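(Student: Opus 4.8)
The plan is to check, one by one, that the triple $(\mathit{ord},\mathit{mon},\mathit{cyc})$ produced by the construction satisfies the three structural requirements and the five conditions (i.)--(v.) of Definition \ref{def:decoration}. The structural part is almost immediate: $\mathit{ord}=\mathit{ord}_T$ has exactly one anticlockwise cycle per vertex by the definition of $\mathit{ord}_T$; each $\overline{\gamma}_h$ is a genuine loop based at $v$ (the tree path $\alpha_h$ is unique since $T$ is a tree, and $\gamma_h$ is unique up to isotopy $\op{rel.}B$), so $\mathit{mon}(h)=\Sigma(\overline{\gamma}_h)\in S_d$ is well defined. The one observation needed for $\mathit{cyc}$ is that $\overline{\gamma}_b$ is itself a \emph{keyhole} loop about $b$ based at $v$: conjugating the keyhole loop $\gamma_b$ by the embedded tree path $\alpha_b$ produces a simple loop bounding a disk that meets $B$ only in $b$. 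Thus Proposition \ref{prop:keyhole loop monodromy} applies with basepoint $v$ and loop $\overline{\gamma}_b$, giving a canonical bijection $\mathit{cyc}_{\overline{\gamma}_b}\colon f^{-1}(b)\to\op{Cycles}(\mathit{mon}(b))$. Restricting to $\phi^{-1}(b)\subseteq f^{-1}(b)$ and letting $b$ range over $B$ shows $\mathit{cyc}$ is a well-defined injection into $\bigsqcup_{b}\op{Cycles}(\mathit{mon}(b))$.

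Conditions (iii.) and (iv.) then fall out of the same application of Proposition \ref{prop:keyhole loop monodromy}: part (ii) identifies the cycle lengths of $\mathit{mon}(b)=\Sigma(\overline{\gamma}_b)$ with the local degrees $\op{rm}_f$ over $f^{-1}(b)$, whose multiset is $\op{br}_f(b)=\op{br}(b)$, and identifies $\mathit{cyc}(a)=\mathit{cyc}_{\overline{\gamma}_b}(a)$ as a cycle of length $\op{rm}_f(a)=\op{rm}(a)$. For the transitivity condition (v.), I would argue that the loops $\{\overline{\gamma}_b\}_{b\in B}$ generate $\pi_1(S^2\smallsetminus B,v)$: a closed regular neighbourhood of the embedded tree $T$ is a disk containing $B$ in its interior, its complement is a puncture-free disk, and van Kampen then presents $\pi_1(S^2\smallsetminus B,v)$ as the group generated by these puncture loops (with the single relation that their ordered product is trivial). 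Hence the subgroup generated by $\{\mathit{mon}(b)\}_{b\in B}$ is the image of the full monodromy representation, which acts transitively by Proposition \ref{prop:transitive monodromy group implies connected source} because $X$ is connected.

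The two conditions carrying the real topological content are (i.) and (ii.). For (i.), the key point is that all half-edges $h_1,\dots,h_k$ at a fixed vertex $w$ share the \emph{same} tree path $\alpha_w$ from $v$, so that
\[
\mathit{mon}(h_i)=\Sigma(\alpha_w)^{-1}\,\Sigma(\gamma_{h_i})\,\Sigma(\alpha_w).
\]
It then suffices to show the anticlockwise product $\gamma_{h_k}\cdots\gamma_{h_1}$ of the direction-loops around $w$ is null-homotopic; this holds because that product is homotopic to a small loop encircling only the vertex $w$ and no point of $B$, which bounds a disk. Conjugating by $\Sigma(\alpha_w)$ gives $\mathit{mon}(h_k)\cdots\mathit{mon}(h_1)=\op{Id}$. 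For (ii.), with $e$ an edge carrying half-edges $h$ (at $w_1$) and $h'$ (at $w_2$), the plan is to transport $\gamma_{h'}$ along $e$ to base it at $w_1$ and show the result equals $\gamma_h^{-1}$: both $\gamma_h$ and the transported $\gamma_{h'}$ are isotopic $\op{rel.}B$ to the dual curve $\delta_e$, which separates $B$ exactly as the two subtrees of $T\smallsetminus e$ do, but they bound the two complementary disks on their respective left, hence are mutually inverse. Conjugating into the common basepoint $v$ then yields $\mathit{mon}(h')=\mathit{mon}(h)^{-1}$.

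I expect the main obstacle to be the careful bookkeeping of basepoints, orientations and the right-to-left composition convention in (i.) and (ii.) --- in particular, verifying that transporting $\gamma_{h'}$ along $e$ really produces $\gamma_h^{-1}$ and not $\gamma_h$ or some other conjugate, which is exactly where the ``two sides of a separating simple closed curve'' picture must be made precise. Everything else is a routine translation of Propositions \ref{prop:keyhole loop monodromy} and \ref{prop:transitive monodromy group implies connected source} through the definitions.
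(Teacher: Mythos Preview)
Your proposal is correct and follows essentially the same approach as the paper's proof: verifying (i.)--(v.) via the null-homotopy of the anticlockwise product of the $\gamma_{h_i}$ at each vertex, the homotopy $\overline{\gamma}_h\simeq\overline{\gamma}_{h'}^{-1}$ across an edge, the keyhole-loop Proposition \ref{prop:keyhole loop monodromy} for the leg permutations, and Proposition \ref{prop:transitive monodromy group implies connected source} for transitivity. Your treatment is in fact slightly more careful than the paper's in places --- you explicitly justify that the $\overline{\gamma}_b$ generate $\pi_1(S^2\smallsetminus B,v)$ before invoking transitivity, a step the paper leaves implicit.
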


\begin{proof}
We must show that conditions (i.)-(v.) of Definition \ref{def:decoration} are satisfied.
\begin{enumerate}[label={(\roman*.)}]
\item Let $w$ be a vertex of $\mathscr{T}$, and let $h_1, \ldots, h_k$ be its incident half-edges in anticlockwise cyclic order. The concatenation $\overline{\gamma}_{h_k} \concat \ldots \concat \overline{\gamma}_{h_1}$ is homotopic to the boundary of a disk containing the tree $T$, which can be homotoped to a point, so
\begin{align*}
\mathit{mon}(h_k) \circ \ldots \circ \mathit{mon}(h_1) &= \Sigma\left(\overline{\gamma}_{h_k}\right) \circ \ldots \circ \Sigma\left(\overline{\gamma}_{h_1}\right)\\ 
&= \op{Id}.
\end{align*}
\item Suppose the half-edges $h$ and $h'$ form an edge. The loops $\overline{\gamma}_h$ and $\overline{\gamma}_{h'}^{-1}$ are homotopic in $(S^2,B)$, so $\Sigma(\overline{\gamma}_h) = \Sigma(\overline{\gamma}_{h'})^{-1}$ and $\mathit{mon}(h) = \mathit{mon}(h')^{-1}$.
\item $\overline{\gamma}_{b}$ is a keyhole loop, so by Proposition \ref{prop:keyhole loop monodromy}, $\mathit{mon}(b) = \Sigma(\overline{\gamma}_b)$ has cycle type $\op{br}_f(b)$. 
\item $\overline{\gamma}_{f(a)}$ is a keyhole loop, so by Proposition \ref{prop:keyhole loop monodromy}, $\mathit{cyc}(a)$ is a cycle of $\mathit{mon}({f(a)}) = \Sigma(\overline{\gamma}_{f(a)})$ of length $\op{rm}_f(a)$. 
\item $X$ is connected, so by Proposition \ref{prop:transitive monodromy group implies connected source}, the collection of leg permutations $\{\mathit{mon}(b)\}_{b \in B}$ is transitive.
\end{enumerate}
\end{proof}

\begin{figure}[ht!]
\centering
\includegraphics[scale=1]{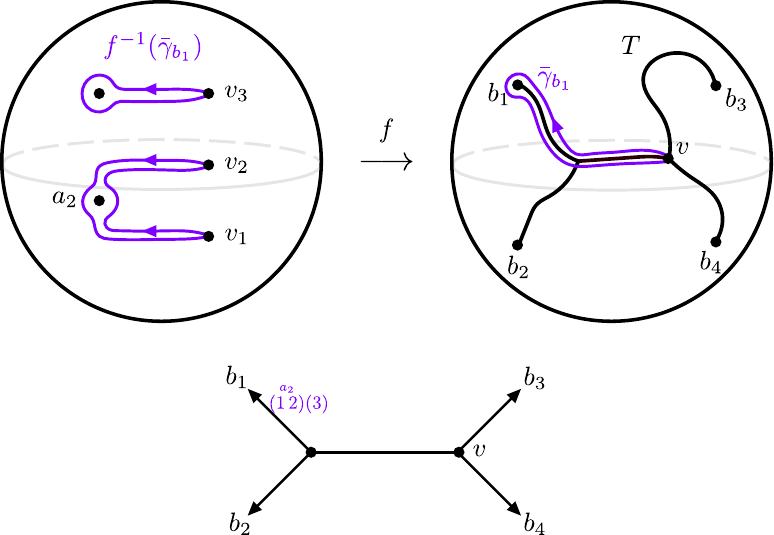}
\caption{Given a branched cover $f$, an embedding $T$, a vertex $v$ and a labelling of $f^{-1}(v)$, we use the loop $\overline{\gamma}_{b_1}$ to decorate the half-edge $b_1$.}
\label{fig:decoration from embedded tree}
\end{figure}

The rest of this section will be spent proving properties of the decoration function. In particular, we will examine how $D_f(T,v, l)$ changes when we make different choices of $f$, $T$, $v$ and $l$. 
\end{subsection}

\begin{subsection}{Isotoping \texorpdfstring{\boldmath $T, v$ and $l$}{T, v and l}} 
\begin{sloppypar}
Suppose $\beta$ is a path in $S^2 \smallsetminus B$ from $v$ to $v'$, and suppose ${l: f^{-1}(v) \rightarrow \{v_1, \ldots, v_d\}}$ is a labelling. The \textit{extension of $l$ along $\beta$} is the labelling ${l': f^{-1}(v') \rightarrow \{v_1', \ldots, v_d'\}}$ defined so that the lift of $\beta$ to $v_i$ ends at $v_i'$. 
\end{sloppypar}

\begin{lemma}\label{lemma:isotoping T,v,l}
Let $f, T, v$ and $l$ be as above, and suppose there is an isotopy from $T$ to $T'$ that takes $v$ to $v'$. Let $l'$ be the extension of $l$ along the path that $v$ takes to $v'$ under the isotopy. Then 
$$D_f(T,v,l) = D_f(T', v', l').$$
\end{lemma}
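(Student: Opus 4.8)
The plan is to transport all the topological data that defines the decoration along the ambient isotopy guaranteed by Remark \ref{remark:ambient isotopy}, and to absorb the resulting change of basepoint using the hypothesis that $l'$ is the extension of $l$ along the path $v$ traces. Concretely, I would fix an ambient isotopy $I_t \colon S^2 \to S^2$ fixing $B$ throughout, with $I_0 = \op{Id}$, realising the given isotopy of embeddings (so that $I_1$ carries $T$ to $T'$ compatibly with the identifications of their half-edges with $H(\mathscr{T})$). Then the path traced by the basepoint is exactly $\beta(t) = I_t(v)$, a path in $S^2 \smallsetminus B$ from $v$ to $v'$ (here $v \notin B$ because $v$ is a vertex, not a leg endpoint), and this is precisely the path along which $l'$ extends $l$. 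Write $(\mathit{ord},\mathit{mon},\mathit{cyc}) = D_f(T,v,l)$ and $(\mathit{ord}',\mathit{mon}',\mathit{cyc}') = D_f(T',v',l')$, with $\Sigma,\Sigma'$ the monodromy representations based at $v,v'$ with labellings $l,l'$.

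First I would dispose of $\mathit{ord}$: since $I_1$ is orientation preserving (being isotopic to the identity) and carries $T$ to $T'$ compatibly with the identifications of half-edges, it preserves the anticlockwise cyclic order of half-edges at every vertex, so $\mathit{ord}_T = \mathit{ord}_{T'}$. The heart of the argument is the identity in $\pi_1(S^2 \smallsetminus B, v')$
$$[\overline{\gamma}'_h] = [\beta \concat \overline{\gamma}_h \concat \beta^{-1}]$$
for every half-edge $h$. I would prove it in two steps. Because $T'$ is a tree, the unique path from $v'$ to $\op{base}(h)$ in $T'$ is $I_1(\alpha_h)$, while $I_1(\gamma_h)$ bounds the image subtree; by the uniqueness up to isotopy rel $B$ of the paths and loops involved, this gives $I_1(\overline{\gamma}_h) \simeq \overline{\gamma}'_h$ rel $v'$. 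On the other hand, the free homotopy $(s,t) \mapsto I_t(\overline{\gamma}_h(s))$ from $\overline{\gamma}_h$ to $I_1(\overline{\gamma}_h)$ traces its basepoint along $\beta$, and the standard change-of-basepoint relation for free homotopies yields $[I_1(\overline{\gamma}_h)] = [\beta \concat \overline{\gamma}_h \concat \beta^{-1}]$; combining the two gives the displayed identity.

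With this in hand, the lemma for $\mathit{mon}$ is immediate: since $l'$ is the extension of $l$ along $\beta$, lifting a loop sheet by sheet shows $\Sigma'(\beta \concat \gamma \concat \beta^{-1}) = \Sigma(\gamma)$ for every loop $\gamma$ based at $v$, so
$$\mathit{mon}'(h) = \Sigma'\bigl([\overline{\gamma}'_h]\bigr) = \Sigma'\bigl([\beta \concat \overline{\gamma}_h \concat \beta^{-1}]\bigr) = \Sigma\bigl([\overline{\gamma}_h]\bigr) = \mathit{mon}(h).$$
For $\mathit{cyc}$ I would argue that the canonical bijection of Proposition \ref{prop:keyhole loop monodromy} is natural under this change of basepoint: the lift of $\beta \concat \overline{\gamma}_b \concat \beta^{-1}$ starting at the sheet $v'_i$ first runs back to $v_i$ along $\beta^{-1}$ and then follows the lift of $\overline{\gamma}_b$, so on each sheet the two keyhole loops encircle the same preimage of $b$; hence $\mathit{cyc}_{\overline{\gamma}'_b} = \mathit{cyc}_{\overline{\gamma}_b}$ as maps into $\op{Cycles}(\Sigma'(\overline{\gamma}'_b)) = \op{Cycles}(\Sigma(\overline{\gamma}_b))$, these cycle sets being equal by the previous step. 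This gives $\mathit{cyc}' = \mathit{cyc}$ and hence $D_f(T,v,l) = D_f(T',v',l')$.

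The main obstacle I expect is the bookkeeping of conventions in the identity $[\overline{\gamma}'_h] = [\beta \concat \overline{\gamma}_h \concat \beta^{-1}]$ — in particular matching the paper's right-to-left concatenation convention with the direction in which the basepoint is transported — together with pinning down the naturality of the canonical bijection of Proposition \ref{prop:keyhole loop monodromy}, whose proof is only sketched; everything else is a direct consequence of Remark \ref{remark:ambient isotopy} and the definition of the extension of $l$ along $\beta$.
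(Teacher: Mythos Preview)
Your proposal is correct and follows essentially the same approach as the paper's own proof: transport the loops $\overline{\gamma}_h$ along an ambient isotopy (Remark \ref{remark:ambient isotopy}) to obtain valid choices for $\overline{\gamma}'_h$, then use that $l'$ is the extension of $l$ along $\beta$ to match monodromy permutations and cycle markings. The paper's version is terser---it simply states that the ambient isotopy carries $\overline{\gamma}_h$ to a valid $\overline{\gamma}'_h$ and that the extension hypothesis ``ensures exactly'' that the monodromies agree---whereas you make the change-of-basepoint identity $[\overline{\gamma}'_h] = [\beta \concat \overline{\gamma}_h \concat \beta^{-1}]$ explicit and unpack the naturality of $\mathit{cyc}_{\overline{\gamma}_b}$; but the underlying argument is the same.
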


\begin{proof}
Let the two decorations be $(\mathit{ord}, \mathit{mon}, \mathit{cyc})$ and $(\mathit{ord}', \mathit{mon}', \mathit{cyc}')$, respectively. The isotopy preserves the cyclic order of the half-edges around each vertex, so $\mathit{ord} = \mathit{ord}'$. Let $h$ be a half-edge of $T$, and let $h'$ be the half-edge of $T'$ that $h$ gets isotoped to. An ambient isotopy of the sphere (Remark \ref{remark:ambient isotopy}) that takes $T$ to $T'$ carries the loop $\gamma_h$ to a valid choice of loop $\gamma_{h'}$, and thus carries $\overline{\gamma}_h$ (based at $v$) to a valid choice of $\overline{\gamma}_{h'}$ (based at $v'$). The fact that $l'$ is an extension of $l$ along the path from $v$ to $v'$ ensures exactly that the monodromy permutation of $\overline{\gamma}_h$ is equal to the monodromy permutation of $\overline{\gamma}_{h'}$ (and that the cycle markings are equal, in the case that $h$ and $h'$ are legs), and so $\mathit{mon} = \mathit{mon}'$ and $\mathit{cyc} = \mathit{cyc}'$. 
\end{proof}
\end{subsection}

\begin{subsection}{Changing \texorpdfstring{\boldmath $v$ and $l$}{v and l}}
Next comes the easiest: changing $v$ and $l$ while keeping $T$ fixed changes the decoration by a global conjugation.

\begin{lemma}\label{lemma:changing vertex and labelling of decoration}
With the notation above, suppose that $v'$ is another choice of vertex (possibly equal to $v$) and that $l': f^{-1}(v') \rightarrow \{v'_1, \ldots, v'_d\}$ is a labelling of its preimage. Then the decorations $D_f(T,v,l)$ and $D_f(T,v',l')$ are globally conjugate. 
\end{lemma}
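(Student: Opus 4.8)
The plan is to reduce the general change $(v,l) \rightsquigarrow (v',l')$ to two elementary moves: (a) changing the labelling at a fixed basepoint, which will produce exactly a global conjugation, and (b) moving the basepoint while \emph{transporting} the labelling along a path, which will leave the decoration literally unchanged. At the outset I observe that $\mathit{ord} = \mathit{ord}_T$ depends only on the embedding $T$, so both decorations share the same $\mathit{ord}$; this is consistent with the fact that a global conjugation fixes $\mathit{ord}$, and it reduces the problem to comparing $\mathit{mon}$ and $\mathit{cyc}$.

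First I would fix a path $\beta$ in $S^2 \smallsetminus B$ from $v$ to $v'$, conveniently the reduced path running along the edges of $T$; since $v$ and $v'$ are interior vertices, this path meets no point of $B$. Let $l''$ be the extension of $l$ along $\beta$ (defined in the previous subsection), and write $T_\beta : f^{-1}(v) \to f^{-1}(v')$ for the path-transport bijection. The geometric input for step (b) is that the shortest tree-path $\alpha_h$ from $v$ to $\op{base}(h)$ is homotopic rel endpoints in $S^2 \smallsetminus B$ to $\alpha_h' \cdot \beta$, where $\alpha_h'$ is the shortest tree-path from $v'$ to $\op{base}(h)$; this holds because all of these paths lie in the contractible tree $T$ minus its leg-endpoints. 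Writing $\overline{\gamma}_h'' = (\alpha_h')^{-1}\cdot \gamma_h \cdot \alpha_h'$ for the loop used by $D_f(T,v',l'')$ and noting that $\gamma_h$ depends only on $T$, this gives $\overline{\gamma}_h \simeq \beta^{-1}\cdot \overline{\gamma}_h'' \cdot \beta$. A path-lifting computation (using that, with the right-to-left convention, $\Sigma$ is a genuine homomorphism) then yields the change-of-basepoint identity $\Sigma(\beta^{-1}\cdot\gamma'\cdot\beta) = \tau^{-1}\circ \Sigma''(\gamma')\circ \tau$ with $\tau = l''\circ T_\beta \circ l^{-1}$; since $l''$ is the transport of $l$ we get $\tau = \op{Id}$, so $\Sigma(\overline{\gamma}_h)=\Sigma''(\overline{\gamma}_h'')$ and the cycle bijections of Proposition \ref{prop:keyhole loop monodromy} agree. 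Hence $D_f(T,v,l)=D_f(T,v',l'')$ exactly, the analogue of Lemma \ref{lemma:isotoping T,v,l} for basepoint transport rather than isotopy.

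Second, at the fixed basepoint $v'$ I would compare $l''$ with the arbitrary labelling $l'$. Setting $\tau = l'\circ (l'')^{-1}\in S_d$, the change-of-labelling rule gives $\Sigma'(\gamma)=\tau\circ\Sigma''(\gamma)\circ\tau^{-1}$ for every loop $\gamma$ based at $v'$, so $\mathit{mon}'(h)=\tau\circ\mathit{mon}(h)\circ\tau^{-1}$ for all $h$. Relabelling a cycle of a permutation by $\tau$ is precisely conjugation by $\tau$, so the canonical bijections of Proposition \ref{prop:keyhole loop monodromy} transform in the same way and $\mathit{cyc}'(a)=\tau\circ\mathit{cyc}(a)\circ\tau^{-1}$. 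Thus $D_f(T,v',l')$ is obtained from $D_f(T,v',l'')$ by a global conjugation by $\tau$, and composing with step (b) shows that $D_f(T,v,l)$ and $D_f(T,v',l')$ are globally conjugate.

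The only steps requiring genuine care, rather than bookkeeping, are the sign/direction conventions in deriving the change-of-basepoint formula (where the right-to-left concatenation must be tracked consistently) and the naturality of the canonical cycle bijection of Proposition \ref{prop:keyhole loop monodromy} under transport and relabelling; confirming that the assignment $a \mapsto \mathit{cyc}(a)$ is compatible with conjugation of the underlying keyhole-loop monodromy is the one place I expect to need a short argument. Everything else follows from the simple connectivity of the tree and standard covering-space path lifting.
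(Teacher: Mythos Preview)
Your proposal is correct and follows essentially the same approach as the paper: transport the labelling $l$ along the tree path $\beta$ to obtain $l''$, observe that $D_f(T,v,l)=D_f(T,v',l'')$, and then note that passing from $l''$ to $l'$ is a relabelling and hence a global conjugation. You supply more detail than the paper does in justifying the equality $D_f(T,v,l)=D_f(T,v',l'')$ (the paper simply asserts it), but the structure of the argument is identical.
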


\begin{proof}
Let the two decorations be $(\mathit{ord}, \mathit{mon}, \mathit{cyc})$ and $(\mathit{ord}', \mathit{mon}', \mathit{cyc}')$, respectively. It is clear that $\mathit{ord} = \mathit{ord}_T = \mathit{ord}'$. Let $\beta$ be the unique shortest path in $T$ from $v$ to $v'$, and let $l''$ be the labelling of $f^{-1}(v')$ obtained by extending the labelling $l$ along the path $\beta$. Then $D_f(T,v,l) = D_f(T,v', l'')$. Composing $l''$ with the correct permutation $\tau$ gives $l'$. The decoration $D_f(T,v',l')$ is then the global conjugation of $D_f(T,v',l'') = D_f(T,v,l)$ by $\tau$.
\end{proof}
\end{subsection}

\begin{subsection}{Braiding \texorpdfstring{\boldmath $T$}{T}}\label{subsec:effect of braiding T on decoration}
As indicated in Section \ref{sec:decorated trees}, changing $T$ by a braid move of embedded trees, while keeping $v$ and $l$ fixed, changes $D_f(T,v,l)$ by a braid move of \textit{decorated} trees.

\begin{lemma}\label{lemma:braid move of trees gives braid move of decorations}
Let $h$ be a half-edge of $T$, and suppose $T'$ is obtained from $T$ by a braid move of embedded trees at the half-edge $h$. Then $D_f(T,\op{base}(h),l)$ differs from $D_f(T',\op{base}(h),l)$ by a braid move of \textit{decorated} trees at $h$.
\end{lemma}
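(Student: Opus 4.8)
The plan is to compute $D_f(T',\op{base}(h),l)$ directly and match it, entry by entry, against the anticlockwise braid move of $D_f(T,\op{base}(h),l)$ at $h$ (the clockwise case being symmetric). Write $v=\op{base}(h)$, $\widetilde h=\mathit{ord}_T(h)$, and let $S$ be the component of $\mathscr T \smallsetminus \{v\}$ containing $h$. Since the basepoint is $v=\op{base}(h)=\op{base}(\widetilde h)$, the connecting paths $\alpha_h$ and $\alpha_{\widetilde h}$ are constant, so $\overline\gamma_h=\gamma_h$ and $\overline\gamma_{\widetilde h}=\gamma_{\widetilde h}$; in particular $\mathit{mon}(\widetilde h)=\Sigma(\gamma_{\widetilde h})$. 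The $\mathit{ord}$-component is immediate: an anticlockwise braid move of embedded trees conjugates $\mathit{ord}_T$ by $(h\,\,\widetilde h)$ (Section \ref{subsection:braiding embedded stable trees}), which is precisely the prescription for $\mathit{ord}'$. For the half-edges $k\notin S$, I would note that $\gamma_h$ and $\gamma_{\widetilde h}$ may be chosen disjoint from the remaining subtrees, so the braid move is supported in a disk meeting the rest of $T$ only near $v$; hence the embeddings of such $k$, together with the loops $\overline\gamma_k$, are carried to isotopic ones and $\Sigma(\overline\gamma'_k)=\Sigma(\overline\gamma_k)$. This matches the decorated braid move, which fixes $\mathit{mon}$ and $\mathit{cyc}$ off $S$.

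The crux is the behaviour on $S$, where I claim the homotopy identity $\overline\gamma'_k \simeq \gamma_{\widetilde h}\concat\overline\gamma_k\concat\gamma_{\widetilde h}^{-1}$ rel $v$ in $S^2\smallsetminus B$. To see this, observe that by definition the braid move reroutes the edge $h$ as ``$\gamma_{\widetilde h}^{-1}$ followed by $h$'', and an ambient isotopy (Remark \ref{remark:ambient isotopy}) drags the whole subtree $S$ rigidly along this detour. Consequently, for $k\in S$ the new connecting path satisfies $\alpha'_k \simeq \alpha_k\concat\gamma_{\widetilde h}^{-1}$, while the small loop $\gamma'_k$ bounding the subtree beyond $k$ is the rigid transport of $\gamma_k$, so that $\alpha_k^{-1}\concat\gamma'_k\concat\alpha_k \simeq \overline\gamma_k$. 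Substituting into $\overline\gamma'_k=(\alpha'_k)^{-1}\concat\gamma'_k\concat\alpha'_k$ and using $(\alpha_k\concat\gamma_{\widetilde h}^{-1})^{-1}=\gamma_{\widetilde h}\concat\alpha_k^{-1}$ yields $\overline\gamma'_k \simeq \gamma_{\widetilde h}\concat\alpha_k^{-1}\concat\gamma'_k\concat\alpha_k\concat\gamma_{\widetilde h}^{-1}\simeq \gamma_{\widetilde h}\concat\overline\gamma_k\concat\gamma_{\widetilde h}^{-1}$, the claimed conjugation; note that the inverse in $(\alpha'_k)^{-1}$ is what turns the rerouting $\gamma_{\widetilde h}^{-1}$ into conjugation by $\gamma_{\widetilde h}$ (not its inverse). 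Applying the homomorphism $\Sigma$ gives $\mathit{mon}'(k)=\Sigma(\gamma_{\widetilde h})\,\mathit{mon}(k)\,\Sigma(\gamma_{\widetilde h})^{-1}=\mathit{mon}(\widetilde h)\circ\mathit{mon}(k)\circ\mathit{mon}(\widetilde h)^{-1}$, exactly matching the decorated braid move. As a sanity check, applying the vertex relation (condition (i.), that the anticlockwise product of the $\gamma_m$ at $v$ is null-homotopic) to the new cyclic order $(\widetilde h\,\, h\,\ldots)$ confirms the conjugating element is $\gamma_{\widetilde h}$.

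Finally, for the $\mathit{cyc}$-data I would record that the canonical bijection of Proposition \ref{prop:keyhole loop monodromy} is equivariant under conjugation of the keyhole loop: if $\overline\gamma'_b \simeq \eta\concat\overline\gamma_b\concat\eta^{-1}$ then $\mathit{cyc}_{\overline\gamma'_b}(a)=\Sigma(\eta)\circ\mathit{cyc}_{\overline\gamma_b}(a)\circ\Sigma(\eta)^{-1}$, since the cycles track the points of $f^{-1}(b)$ and conjugating the loop relabels the preimage of the basepoint by $\Sigma(\eta)$. Taking $\eta=\gamma_{\widetilde h}$ and $b=\phi(a)$ for legs in $S$ then gives $\mathit{cyc}'(a)=\mathit{mon}(\widetilde h)\circ\mathit{cyc}(a)\circ\mathit{mon}(\widetilde h)^{-1}$, with $\mathit{cyc}$ unchanged off $S$ by the same disjointness argument as above. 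Assembling the $\mathit{ord}$, $\mathit{mon}$ and $\mathit{cyc}$ computations shows $D_f(T',v,l)$ is the anticlockwise braid move of $D_f(T,v,l)$ at $h$.

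I expect the main obstacle to be making the two homotopy statements $\alpha'_k \simeq \alpha_k\concat\gamma_{\widetilde h}^{-1}$ and $\alpha_k^{-1}\concat\gamma'_k\concat\alpha_k\simeq\overline\gamma_k$ fully rigorous: concretely, verifying that dragging $S$ along the prescribed detour winds it around exactly the marked points enclosed by $\gamma_{\widetilde h}$ and around no others, so that the transported small loop $\gamma'_k$ really is conjugate to $\gamma_k$ by the connecting path alone. This is where a careful reading of the braid-move construction, and of how the ambient isotopy ``carries along'' the loops $\gamma_k$, is essential.
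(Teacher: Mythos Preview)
Your proposal is correct and follows essentially the same approach as the paper: both reduce to the anticlockwise case, observe that $\mathit{ord}$ changes by conjugation by $(h\,\,\widetilde h)$, and establish the key homotopy $\overline{\gamma}'_k \simeq \gamma_{\widetilde h}\concat\overline{\gamma}_k\concat\gamma_{\widetilde h}^{-1}$ for half-edges $k$ in the subtree $S$ while the loops for $k\notin S$ are unchanged. The paper's proof simply asserts this homotopy in one line, whereas you work it out by tracking the connecting paths $\alpha'_k\simeq\alpha_k\concat\gamma_{\widetilde h}^{-1}$ and the transported loops; your explicit treatment of the $\mathit{cyc}$-equivariance is likewise a spelled-out version of the paper's ``Similarly''.
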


\begin{proof}
\begin{sloppypar}
Suppose the braid move of embedded trees is an anticlockwise braid move, without loss of generality. Let the two decorations be $(\mathit{ord}, \mathord{mon}, \mathit{cyc})$ and $(\mathit{ord}', \mathord{mon}', \mathit{cyc}')$, respectively, and let $h' = \mathit{ord}_T(h)$ be the next half-edge anticlockwise. The cyclic ordering changes as required: ${\mathit{ord}' = \mathit{ord}_{T'} = (h\,\,h') \circ \mathit{ord} \circ (h\,\,h')}$.

Let $k$ be a half-edge in the component of $T \smallsetminus h$ containing $h$. If $\overline{\gamma}_k$ is the original loop that we use to decorate $k$ in $D_f(T, \op{base}(h), l)$, then $\gamma_{h'} \concat \overline{\gamma}_k \concat \gamma_{h'}^{-1}$ is the loop we use to decorate $k$ in $D_f(T', \op{base}(h), l)$. Thus, ${\mathit{mon}'(k) = \mathit{mon}(h') \circ \mathit{mon}(k) \circ \mathit{mon}(h')^{-1}}$. Similarly, if $k$ is a leg and is mapped to by $a \in A$, then ${\mathit{cyc}'(a) = \mathit{mon}(h') \circ \mathit{cyc}(a) \circ \mathit{mon}(h')^{-1}}$.

If $k$ is a half-edge in a different component of $T \smallsetminus h$, then we use the same loop $\overline{\gamma_k}$ to decorate $k$. 
\end{sloppypar}
\end{proof}
\end{subsection}

\begin{subsection}{Realising \texorpdfstring{\boldmath $P$}{P}-decorations} 
Let $X$ be a compact surface of genus $g$, and let $A \subset X$ and $B \subset S^2$ be finite subsets. Let $P = (g, d, B, A, \phi: A \rightarrow B, \op{br}, \op{rm})$ be a portrait. The following proposition tells us that every $P$-decorated tree is actually realised as $D_f(T,v,l)$ for some choice of $f, T, v, l$. The proof is essentially the same as the `Ikea approach' of \cite[Theorem 7.2.2]{cavalieri2016}, restricted to the case where the target is a sphere. In our language, their proof covers the case of the one-vertex stable tree, whereas we make the slight generalisation to other stable trees.

\begin{sloppypar}
\begin{proposition}\label{prop:realising decorations}
Let $(\mathit{ord}, \mathit{mon}, \mathit{cyc})$ be a $P$-decoration of $\mathscr{T}$. Then there exists a branched cover $f:(X,A) \rightarrow (S^2,B)$ realising $P$, an embedding $T$ of $\mathscr{T}$, a vertex $v$ of $T$ and a labelling $l$ of $f^{-1}(v)$ such that ${D_f(T,v,l) = (\mathit{ord}, \mathit{mon}, \mathit{cyc})}$. 
\end{proposition}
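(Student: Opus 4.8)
The plan is to run the decoration construction of Definition \ref{cons:decoration of stable tree} in reverse, reducing everything to the classical construction of a branched cover from its monodromy --- the ``one-vertex'' Ikea approach of \cite[Theorem 7.2.2]{cavalieri2016} --- once the tree has been suitably embedded. First I would produce the embedding. Since $\mathscr{T}$ is a tree, the cyclic orders recorded by $\mathit{ord}$ (one cycle per vertex) define a ribbon structure on $\mathscr{T}$, and a ribbon tree embeds in the oriented sphere uniquely up to isotopy with $\mathit{ord}_T = \mathit{ord}$; after an isotopy of $S^2$ (using that the points of $B$ carry no extra structure) I may assume the endpoint of each leg $b$ lands on $b \in B$. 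Call this embedding $T$, and choose any vertex $v$ of $T$ as basepoint.

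Next I would build the monodromy homomorphism. The based keyhole loops $\overline{\gamma}_b$ for the legs $b \in B$ form a standard generating set of the free group $\pi_1(S^2 \smallsetminus B, v)$, subject to the single relation that their product in the cyclic order determined by $T$ and $v$ is trivial. The key step is to verify that the assignment $\overline{\gamma}_b \mapsto \mathit{mon}(b)$ respects this relation, i.e. that the corresponding ordered product of the $\mathit{mon}(b)$ equals $\op{Id} \in S_d$. This is exactly the global form of decoration condition (i.), and I would prove it by induction over the tree: condition (i.) makes the cyclic product of half-edge permutations trivial at each vertex, while condition (ii.) makes the two permutations on each interior edge cancel, so that the vertex relations splice together into the single boundary relation. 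This yields a homomorphism $\Sigma \colon \pi_1(S^2 \smallsetminus B, v) \to S_d$ with $\Sigma(\overline{\gamma}_b) = \mathit{mon}(b)$, whose image is transitive by condition (v.).

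With $\Sigma$ in hand, the one-vertex Ikea construction applies verbatim to produce a connected branched cover $f \colon X \to S^2$ of degree $d$, branched over $B$, realising $\Sigma$, together with a labelling $l$ of $f^{-1}(v)$ inducing $\Sigma$. The genus of $X$ is $g$ by Riemann--Hurwitz (portrait condition (1)), and the branch profile over each $b$ is the cycle type of $\mathit{mon}(b)$, which is $\op{br}(b)$ by condition (iii.). To place the source marks, for each $a \in A$ I would use the canonical bijection $\mathit{cyc}_{\overline{\gamma}_{\phi(a)}}$ of Proposition \ref{prop:keyhole loop monodromy} to transport the cycle $\mathit{cyc}(a)$ of $\mathit{mon}(\phi(a))$ to a point of $f^{-1}(\phi(a))$ and declare $a$ to be that point; injectivity of $\mathit{cyc}$ makes these points distinct, and condition (iv.) guarantees $\op{rm}_f(a) = \op{rm}(a)$. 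This produces $f \colon (X,A) \to (S^2,B)$ realising $P$.

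Finally I would check that $D_f(T,v,l) = (\mathit{ord}, \mathit{mon}, \mathit{cyc})$. We have $\mathit{ord}_T = \mathit{ord}$ by construction, and $\Sigma(\overline{\gamma}_b) = \mathit{mon}(b)$ on the legs; for an interior half-edge $h$ the loop $\overline{\gamma}_h$ is homotopic to the product of the $\overline{\gamma}_b$ over the legs $b$ in its subtree, so $\Sigma(\overline{\gamma}_h)$ equals the same product of the $\mathit{mon}(b)$ --- which is precisely the value the decoration assigns to $h$ by propagating via (i.) and (ii.), the very computation used above. The cycle data agrees by the choice of marks. I expect the main obstacle to be the bookkeeping in the inductive splicing argument for the product relation (and its reuse for the interior half-edges), since one must carefully track how the planar order of the legs around $v$ is assembled from the cyclic orders $\mathit{ord}$ at the interior vertices; everything else is either the ribbon-tree embedding statement or the standard monodromy-to-cover construction.
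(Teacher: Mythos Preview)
Your proposal is correct and follows the same underlying strategy as the paper --- reduce to the classical monodromy-to-cover construction --- but the organisation differs in a way worth noting.

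The paper begins by \emph{contracting} all edges of $\mathscr{T}$ to obtain a one-vertex decoration $(\mathit{ord}',\mathit{mon}',\mathit{cyc}')$; condition (i.) at that single vertex then gives the global product relation for free, with no induction. The polygon $Q$ (and its $d$ glued copies) is built from this contracted data, and only afterwards is $\mathscr{T}$ re-embedded inside $Q$ with the correct $\mathit{ord}$. You instead keep the full tree from the start, embed it via its ribbon structure, and then prove the product relation by an inductive splicing of the vertex relations (i.) along the edges using (ii.). These are two ways of packaging the same computation: your induction is exactly what ``contract all edges'' does in one step. The paper's route avoids the bookkeeping you flag as the main obstacle, at the cost of the extra step of placing $T$ inside the polygon; your route is more conceptual about why the relation holds but requires that bookkeeping. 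The paper is also more hands-on about the cover itself (explicit polygon gluing rather than invoking the existence theorem as a black box), but this is purely a matter of exposition --- both appeal to the same \cite[Theorem 7.2.2]{cavalieri2016}. Your final check on interior half-edges via propagation is precisely the paper's appeal to Remark \ref{remark:decoration on the legs defines the rest}(2).
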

\end{sloppypar}

\begin{proof} 
Let $(\mathit{ord}', \mathit{mon}', \mathit{cyc}')$ be the $P$-decoration obtained by contracting all the edges of $\mathscr{T}$ -- the contracted tree has just one vertex, so $\mathit{ord}'$ has exactly one cycle and it contains all the elements of $B$. Let $Q$ be a regular $2|B|$-gon, and label the sides $\alpha_{b, \text{out}}$, $\alpha_{b, \text{in}}$ in adjacent pairs going anticlockwise, with their common corner labelled $b$, following the cyclic order $\mathit{ord}'$.

Let $T$ be an embedding of $\mathscr{T}$ in $Q$, with the endpoint of the leg labelled $b$ at the corner labelled $b$, so that $T$ intersects the boundary of $Q$ only at the labelled corners. It is straightforward to see that we can pick this embedding so that $T$ has the correct order of half-edges at each vertex -- that is, $\mathit{ord}_T = \mathit{ord}$. We pick our embedding to satisfy this condition. Let $v$ be a vertex of $T$.

Glue the side $\alpha_{b, \text{out}}$ to the side $\alpha_{b, \text{in}}$ by an isometry for each $b \in B$. This yields a sphere $S^2$, and the labelled corners of $Q$ give a subset $B \subset S^2$. The pair $(S^2, B)$ will be the target of the branched cover, and the copy of $T$ inside $(S^2,B)$ will be the embedding.

Take a disjoint union $Q_1 \sqcup \ldots \sqcup Q_d$ of $d$ copies of $Q$, and let $v_i$ be the copy of $v$ inside $Q_i$. We can interpret $\mathit{mon}$ as a set of instructions for gluing the sides of the polygons $Q_i$ to obtain a surface: we glue the side $\alpha_{b, \text{out}}$ of $Q_i$ to the side $\alpha_{b, \text{in}}$ of $Q_j$ by an isometry (taking the corner labelled $b$ to the corner labelled $b$) where $j = \left(\mathit{mon}(b)\right)(i)$. After doing this for all choices of $b$ and $i$, we are left with a closed, oriented surface. If $\mathit{cyc}(a) = (i_1 \ldots i_k) \in \op{Cycles}(\mathit{mon}(b))$, then the corners labelled $b$ of the polygons $Q_{i_1}, \ldots, Q_{i_k}$ all get glued together. We label the resulting point $a$. We will see in a moment that this surface is a connected surface of genus $g$, so we may identify it with $(X,A)$.

\begin{sloppypar}
Let $\overline{f}: Q_1 \sqcup \ldots \sqcup Q_d \rightarrow Q$ be the identity map on each copy of $Q$. This map is compatible with the gluings of source and target, and so descends to a degree-$d$ branched cover ${f: (X, A) \rightarrow (S^2, B)}$. 
\end{sloppypar}

The set of branch points is indeed contained in $B$. (The only other possible branch points are the remaining corners of $Q$, which all get glued to a single point $x \in S^2$, but condition (i.) of Definition \ref{def:decoration} ensures that $f^{-1}(x)$ contains no critical points.) By construction, $f$ realises the portrait $P$: $f$ maps the points of $A$ as prescribed by $\phi$, and if $\mathit{cyc}(a) = (i_1\, \ldots \, i_k) \in \op{Cycles}(\mathit{mon}(b))$ then $a$ maps to $b$ with local degree $\op{rm}_f(a) = k = \op{rm}(a)$. For the same reason, $\op{br}_f(b) = \op{br}(b)$ for each $b \in B$. The Riemann-Hurwitz formula (Definition \ref{def:portrait}, condition (2)) tells us that the genus of the source surface is $g$. Proposition \ref{prop:transitive monodromy group implies connected source} implies that the source surface is connected.

Set $l$ to be the labelling $f^{-1}(v) = \{v_1, \ldots, v_d\}$. Then the branched cover $f$ together with the data $T, v, l$ satisfies $D_f(T, v, l) = (\mathit{ord}, \mathit{mon}, \mathit{cyc})$: it's clear from the construction that the decoration on the legs of $\mathscr{T}$ is correct, and then by Remark \ref{remark:decoration on the legs defines the rest}, (2), the rest of the decoration is also correct.
\end{proof}

Figure \ref{fig:realising a decorated tree example} illustrates the construction for the case of the decorated tree from Figure \ref{fig:decoration first example}.

\begin{figure}[ht!]
\centering
\includegraphics[width=1\textwidth]{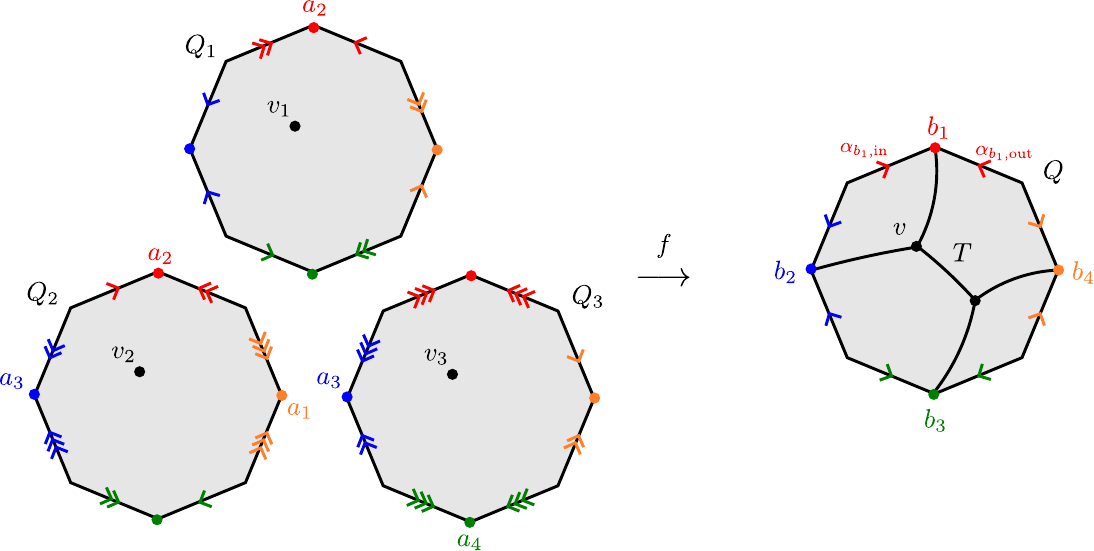}
\caption{Constructing $f, T, v, l$ such that $D_f(T,v,l) = (\mathit{ord}, \mathit{mon}, \mathit{cyc})$ is the decorated tree in Figure \ref{fig:decoration first example}.}
\label{fig:realising a decorated tree example}
\end{figure}

\end{subsection}
\end{section}

\newpage
\begin{section}{Decorated trees from multicurves}\label{sec:decorated trees from multicurves}
In this section, we describe the relationship between multicurves and Hurwitz equivalence classes of decorated trees. We have seen that a stratum of a compactified Hurwitz space can be viewed as an orbit of multicurves (Proposition \ref{prop:HV10 statement}) and as a result, the statements of this section will translate directly to a proof of Theorem \ref{thm:main theorem}.

\begin{subsection}{Multicurves and decorated trees}\label{subsec:multicurves and decorated trees}
Suppose $f: (X,A) \rightarrow (S^2,B)$ is a branched cover and $\Delta$ is a multicurve in $(S^2,B)$. We can assign a $P(f)$-decorated tree to $\Delta$ by picking an embedding of $\mathscr{T}_\Delta$ that is dual to $\Delta$ and feeding it into the decoration function $D_f$, along with a choice of vertex and labelling. Because of the freedom in choosing the embedding, the vertex and the labelling, we can only expect the output decoration to be defined up to Hurwitz equivalence. The next lemma says that this is indeed the case.

\begin{lemma}\label{lemma:different embedded dual trees give equivalent decorations}
\begin{sloppypar}
Let $\Delta$ be a multicurve in $(S^2,B)$. Suppose $T$ and $T'$ are embeddings of $\mathscr{T}_\Delta$ with $\Delta_T$ and $\Delta_{T'}$ isotopic to $\Delta$, that $v$, $v'$ are vertices of $T$, $T'$ and that $l$, $l'$ are labellings of $f^{-1}(v), f^{-1}(v')$, respectively. Then $D_f(T,v,l)$ and $D_f(T', v', l')$ are Hurwitz equivalent. 
\end{sloppypar}
\end{lemma}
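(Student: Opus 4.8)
The plan is to reduce the statement to the three transformation lemmas just proved—Lemma \ref{lemma:isotoping T,v,l}, Lemma \ref{lemma:changing vertex and labelling of decoration} and Lemma \ref{lemma:braid move of trees gives braid move of decorations}—together with the braiding result Proposition \ref{prop:dual trees connected by simple moves}. The key point is that Hurwitz equivalence is, by definition, generated by global conjugations and braid moves of decorated trees, so it suffices to exhibit a chain of decorations connecting $D_f(T,v,l)$ to $D_f(T',v',l')$ in which every elementary step is one of these two operations (or an outright equality). Note that braid moves and isotopies leave the underlying abstract tree $\mathscr{T}_\Delta$ unchanged, so every decoration in the chain is a $P(f)$-decoration of the \emph{same} tree $\mathscr{T}_\Delta$, as is required for Hurwitz equivalence to make sense.

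Since $\Delta_T$ and $\Delta_{T'}$ are both isotopic to $\Delta$, they are isotopic to each other, so Proposition \ref{prop:dual trees connected by simple moves} provides a sequence of embeddings $T = T_0, T_1, \ldots, T_N$, with $T_i$ obtained from $T_{i-1}$ by a braid move at some half-edge $h_i$, and $T_N$ isotopic to $T'$. I would track the decoration along this sequence. Concretely, for each $i$ I first pass to the base vertex $\op{base}(h_i)$ of the $i$-th braid move, which by Lemma \ref{lemma:changing vertex and labelling of decoration} changes the decoration only by a global conjugation (so the particular labelling chosen at $\op{base}(h_i)$ is immaterial up to Hurwitz equivalence); I then apply Lemma \ref{lemma:braid move of trees gives braid move of decorations}, which says that the braid move $T_{i-1}\to T_i$, performed with base vertex $\op{base}(h_i)$, changes the decoration by a braid move of decorated trees. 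Here I use that the braid move fixes $\op{base}(h_i)$, so $f^{-1}(\op{base}(h_i))$ is literally the same labelled set before and after. Interleaving these vertex changes and braid moves yields a chain from $D_f(T_0,v,l)$ to $D_f(T_N,v_N,l_N)$ in which every step is a global conjugation or a braid move of decorated trees, hence preserves the Hurwitz equivalence class.

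It remains to connect $D_f(T_N, v_N, l_N)$ to the target $D_f(T', v', l')$. Since $T_N$ is isotopic to $T'$, Lemma \ref{lemma:isotoping T,v,l} gives $D_f(T_N, v_N, l_N) = D_f(T', v_N', l_N')$, where $v_N'$ is the image of $v_N$ under the isotopy and $l_N'$ is the extension of $l_N$; a final application of Lemma \ref{lemma:changing vertex and labelling of decoration} converts $D_f(T', v_N', l_N')$ to $D_f(T', v', l')$ by a global conjugation. Chaining all of these steps shows that $D_f(T,v,l)$ and $D_f(T',v',l')$ are Hurwitz equivalent. The only point requiring genuine care—and the main bookkeeping obstacle—is that the base vertex of the braid move varies from step to step, so one must repeatedly relocate the base vertex (via Lemma \ref{lemma:changing vertex and labelling of decoration}) to the base of the next braid move; this is harmless precisely because such relocations are global conjugations, which preserve Hurwitz equivalence. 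The substantive input is Proposition \ref{prop:dual trees connected by simple moves}, whose own proof is deferred to the appendix; everything else is the routine chaining of the three transformation lemmas.
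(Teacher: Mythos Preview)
Your proof is correct and follows essentially the same approach as the paper's: use Proposition \ref{prop:dual trees connected by simple moves} to obtain the braid-move sequence $T_0,\ldots,T_N$, interleave Lemma \ref{lemma:changing vertex and labelling of decoration} (global conjugations to relocate the base vertex) with Lemma \ref{lemma:braid move of trees gives braid move of decorations} (braid moves of decorations) along the sequence, then finish with Lemma \ref{lemma:isotoping T,v,l} and one more global conjugation. The paper organises the bookkeeping slightly differently---tracking the sequence $D_f(T_i,v,l)$ with a fixed vertex $v$ and noting that each step costs ``a couple of global conjugations'' to shuttle to and from $\op{base}(h_i)$---but the argument is the same.
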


\begin{proof}
\begin{sloppypar}
By Proposition \ref{prop:dual trees connected by simple moves}, there exists a sequence of embeddings ${T = T_0}, T_1, \ldots, T_N$ so that $T_i$ differs from $T_{i-1}$ by a braid move and $T_N$ is isotopic to $T'$. By Lemma \ref{lemma:braid move of trees gives braid move of decorations} and Lemma \ref{lemma:changing vertex and labelling of decoration}, the $i$th and $(i-1)$th terms of the sequence ${D_f(T_0, v, l)}, {D_f(T_1, v, l)}, \ldots, {D_f(T_N, v, l)}$ differ by a braid move, and possibly a couple of global conjugations (to change back and forth from the vertex $v$ to the vertex $\op{base}(h)$ when applying Lemma \ref{lemma:changing vertex and labelling of decoration}). But $T_N$ is isotopic to $T'$, so by Lemma \ref{lemma:isotoping T,v,l}, the decoration $D_f(T_N, v, l)$ is equal to a decoration $D_f(T', \widetilde{v}, \widetilde{l})$. One more application of Lemma \ref{lemma:changing vertex and labelling of decoration} tells us that $D_f(T', \widetilde{v}, \widetilde{l})$ is globally conjugate to $D_f(T', v', l')$. We can thus obtain the decoration $D_f(T', v', l')$ from $D_f(T, v, l)$ by a sequence of braid moves and global conjugations, and therefore they are Hurwitz equivalent. 
\end{sloppypar}
\end{proof}

Let $\overline{D}_f$ be the resulting well-defined function from the set of multicurves to the set of Hurwitz equivalence classes of $P(f)$-decorated trees:
\begin{align*}
\overline{D}_f: \op{Mult}_{0,B} &\longrightarrow \op{Stab}_{0,B}(P(f))\\
\Delta &\longmapsto \left[ D_f(T,v,l) \right],
\end{align*}
where $T$ is an embedding of $\mathscr{T}_\Delta$ with $\Delta_T$ isotopic to $\Delta$, $v$ a vertex of $T$ and $l$ a labelling of $f^{-1}(v)$. The next lemma tells us that $\overline{D}_f$ behaves well with respect to containment of multicurves.

\begin{lemma}\label{lemma:Dfbar behaves well with containment}
Suppose $\Delta$ and $\Delta'$ are multicurves with $\Delta' \subset \Delta$. Then $\overline{D}_f(\Delta)$ contracts (as a Hurwitz equivalence class of $P$-decorated trees) to $\overline{D}_f(\Delta')$. 
\end{lemma}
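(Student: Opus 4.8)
The plan is to reduce the statement to a single geometric computation: that contracting the \emph{embedded} dual tree realizes the \emph{combinatorial} edge contraction of its decoration. Since $\overline{D}_f$ is well-defined on isotopy classes of multicurves (Lemma \ref{lemma:different embedded dual trees give equivalent decorations}) and contraction descends to Hurwitz equivalence classes (Lemma \ref{lemma:contraction of Heq classes is well defined}), I am free to pick any convenient representatives on both sides. So I would first choose an embedding $T$ of $\mathscr{T}_\Delta$ with $\Delta_T$ isotopic to $\Delta$, a labelling $l$ of $f^{-1}(v)$, and a vertex $v$ arranged to survive all the contractions below.

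Next I would produce the $\Delta'$-side representative geometrically. Recall that $\mathscr{T}_{\Delta'}$ is obtained from $\mathscr{T}_\Delta$ by contracting exactly the edges corresponding to the curves of $\Delta \smallsetminus \Delta'$. Performing those same contractions on the embedded tree $T$ (fixing $v$ throughout, using the contraction of embedded stable trees from Section \ref{section:embedded stable trees}) yields an embedding $T'$ of $\mathscr{T}_{\Delta'}$. I would check that $T'$ is dual to $\Delta'$: for a surviving edge $e$, the dual curve $\delta_e$ cuts off the same subset of $B$ before and after contraction, so $\Delta_{T'} = \{\delta_e \mid e \in E(T')\}$ is isotopic to $\Delta'$ rel $B$. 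Hence $[D_f(T',v,l)] = \overline{D}_f(\Delta')$, and it suffices to show that $D_f(T',v,l)$ is the edge contraction of $D_f(T,v,l)$ in the sense of Section \ref{sec:decorated trees}.

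By induction on the number of contracted edges, I may assume a single edge $e = \{h_0,h_0'\}$ is contracted. Write $D_f(T,v,l)=(\mathit{ord},\mathit{mon},\mathit{cyc})$ and $D_f(T',v,l)=(\mathit{ord}',\mathit{mon}',\mathit{cyc}')$, and match each datum against the combinatorial rule. The permutation $\mathit{ord}'$ is immediate: the geometric contraction (Figure \ref{fig:contraction of embedded stable trees}) splices the two cyclic orders at the merged vertex in exactly the prescribed way, $\ldots(h_0\,h_1\cdots h_k)(h_0'\,h_1'\cdots h_l')\ldots \mapsto \ldots(h_1\cdots h_k\,h_1'\cdots h_l')\ldots$. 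The real content is in $\mathit{mon}$ and $\mathit{cyc}$, where the combinatorial rule demands that \emph{nothing changes}: $\mathit{mon}'=\mathit{mon}$ and $\mathit{cyc}'=\mathit{cyc}$ on every surviving half-edge and every $a\in A$. So I must verify that for each surviving half-edge $h$ the decorating loop $\overline{\gamma}_h = \alpha_h^{-1}\concat\gamma_h\concat\alpha_h$ has the same class in $\pi_1(S^2\smallsetminus B,v)$ for $T$ and for $T'$.

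This verification is the main obstacle. I would realize the contraction fixing $v$ as a one-parameter family of embeddings $T_t$ obtained by dragging the far endpoint of $e$ toward $v$ along $e$, carrying the hanging subtrees along. Because this family keeps $B$ fixed and disjoint from the trees, every loop $\overline{\gamma}_h$ for a surviving $h$ deforms continuously inside $S^2\smallsetminus B$; its homotopy class is therefore constant in $t$, and in the limit it becomes $\overline{\gamma}_h$ for $T'$ (the only effect on $\alpha_h$ is to delete the now-collapsed segment through $e$, which is homotopically trivial). Crucially, the subtree cut off by a surviving $h$, and hence the set of $B$-points enclosed by $\gamma_h$, is unchanged, which is exactly what prevents the deformation from crossing a puncture. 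Consequently $\mathit{mon}(h)=\Sigma(\overline{\gamma}_h)$ and the cycle markings $\mathit{cyc}$ agree, matching the combinatorial contraction. Chaining the single-edge contractions and then passing to Hurwitz equivalence classes yields that $\overline{D}_f(\Delta)$ contracts to $\overline{D}_f(\Delta')$, as claimed.
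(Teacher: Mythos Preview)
Your proof is correct and follows essentially the same approach as the paper: choose $T,v,l$ for $\Delta$, contract the edges of $T$ corresponding to $\Delta\smallsetminus\Delta'$ while keeping $v$ fixed to obtain $T'$ dual to $\Delta'$, and observe that the decorating loops $\overline{\gamma}_h$ for surviving half-edges are homotopic before and after contraction. The paper's proof is terser, but your expanded treatment of the $\mathit{ord}$ splicing and the deformation argument for the homotopy of $\overline{\gamma}_h$ simply fills in details the paper leaves implicit.
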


\begin{proof}
Make a choice of $T,v,l$ for $\Delta$. Let $T'$ be a contraction of $T$ along the edges $e$ of $T$ for which $\delta_e \in \Delta \smallsetminus \Delta'$, keeping $v$ fixed throughout. The dual multicurve $\Delta_{T'}$ is isotopic to $\Delta'$. The loops $\overline{\gamma}_h$ used to construct the decorated tree $D_f(T',v,l)$ are homotopic to loops used to construct the decorated tree $D_f(T,v,l)$. It follows that $D_f(T,v,l)$ contracts to $D_f(T',v,l)$. Thus, $\overline{D}_f(\Delta)$ contracts to $\overline{D}_f(\Delta')$. 
\end{proof}

We now come to the main technical proposition, which will allow us to study \textit{orbits} of multicurves. The proof uses the other direction of the `Ikea approach' of \cite{cavalieri2016} to the one in our proof of Proposition \ref{prop:realising decorations}.

\begin{proposition}\label{prop:main prop}
Let $f, f': (X,A) \rightarrow (S^2,B)$ be branched covers and let $\Delta$, $\Delta'$ be multicurves in $(S^2,B)$. Then $\overline{D}_f(\Delta) = \overline{D}_{f'}(\Delta')$ if and only if $f$ and $f'$ are $(A,B)$-Hurwitz equivalent via homeomorphisms $h$, $\widetilde{h}$ with $h(\Delta) = \Delta'$. 
\end{proposition}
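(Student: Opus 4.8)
The plan is to prove the two implications separately, the reverse direction being a direct comparison of decorations and the forward direction being the substantial one. First a preliminary observation that I would record: if $\overline{D}_f(\Delta) = \overline{D}_{f'}(\Delta')$ then the two Hurwitz classes share a representative $P$-decoration, and a decoration determines its own portrait (the cycle types of the leg permutations recover $\op{br}$, while $\mathit{cyc}$ together with cycle lengths recovers $\phi$ and $\op{rm}$). Hence $P(f) = P(f') =: P$ is automatic, and in particular $f(a) = \phi(a) = f'(a)$ for every $a \in A$; I will use this freely below.

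For the reverse direction, suppose $f' \circ \widetilde{h} = h \circ f$ with $h$ fixing $B$ pointwise, $\widetilde{h}$ fixing $A$ pointwise, and $h(\Delta) = \Delta'$. I would choose an embedding $T$ of $\mathscr{T}_\Delta$ dual to $\Delta$, a vertex $v$, and a labelling $l$ of $f^{-1}(v)$, and then take $T' = h(T)$ (an embedding dual to $h(\Delta) = \Delta'$), $v' = h(v)$, and the labelling $l'$ transported via $(f')^{-1}(v') = \widetilde{h}(f^{-1}(v))$. Since $f' = h \circ f \circ \widetilde{h}^{-1}$, a lift of a loop $\gamma$ at $v$ under $f$ is carried by $\widetilde{h}$ to a lift of $h(\gamma)$ under $f'$, so with these compatible labellings $\Sigma_{f'}(h_*\gamma) = \Sigma_f(\gamma)$. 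As $h$ carries $T$ and its loops $\overline{\gamma}_k$ to those of $h(T)$ and is orientation-preserving (hence preserves $\mathit{ord}$), I expect $D_f(T,v,l) = D_{f'}(h(T),h(v),l')$ on the nose, the cycle markings matching because $\widetilde{h}$ fixes $A$ pointwise. Passing to Hurwitz classes gives $\overline{D}_f(\Delta) = \overline{D}_{f'}(\Delta')$.

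For the forward direction I would first reduce to \emph{equality} of decorations. Choosing data realising both classes, $D_f(T,v,l)$ and $D_{f'}(T',v',l')$ are related by a sequence of global conjugations and braid moves of decorated trees. I would realise each global conjugation on the $f'$ side by changing the labelling $l'$ (Lemma \ref{lemma:changing vertex and labelling of decoration}), and each braid move by braiding the embedded tree $T'$ together with the necessary vertex change (Lemma \ref{lemma:braid move of trees gives braid move of decorations}), exactly as in the proof of Lemma \ref{lemma:different embedded dual trees give equivalent decorations}. The key point is that braiding $T'$ leaves its dual multicurve unchanged and adjusting $(v',l')$ changes neither $T'$ nor $f'$, so after realising the whole sequence I obtain data $(T'',v'',l'')$ for $f'$ with $\Delta_{T''}$ still isotopic to $\Delta'$ and $D_{f'}(T'',v'',l'') = D_f(T,v,l)$.

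The remaining step, producing the Hurwitz equivalence from equal decorations, is the main obstacle. Since $T$ and $T''$ are embeddings of a common $B$-marked stable tree with the same cyclic order $\mathit{ord}$ and the same $B$-marking, there should be an orientation-preserving homeomorphism $h$ of $S^2$ fixing $B$ pointwise with $h(T) = T''$ and $h(v) = v''$ — this is the uniqueness of the embedding of a plane tree determined by its ribbon structure — and it yields $h(\Delta) = \Delta_{T''} = \Delta'$. Equality of decorations gives $\Sigma_{f'}(h_* \overline{\gamma}_b) = \Sigma_f(\overline{\gamma}_b)$ for each leg $b$, and since these loops generate $\pi_1(S^2 \smallsetminus B, v)$ I get $\Sigma_{f'} \circ h_* = \Sigma_f$ with matched labellings. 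By covering-space theory this identifies $f$ over $S^2 \smallsetminus B$ with the pullback along $h$ of $f'$ over $S^2 \smallsetminus B$, giving a homeomorphism $\widetilde{h}$ over $h$ that extends across the branch points by the local model $z \mapsto z^k$. Finally $\widetilde{h}$ fixes $A$ pointwise: for $a \in A$ the lift relation puts $\widetilde{h}(a) \in (f')^{-1}(\phi(a))$ with $\mathit{cyc}_{f'}(\widetilde{h}(a)) = \mathit{cyc}_f(a) = \mathit{cyc}_{f'}(a)$, so injectivity of $\mathit{cyc}_{f'}$ (Proposition \ref{prop:keyhole loop monodromy}) forces $\widetilde{h}(a) = a$. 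This is precisely the reverse of the Ikea gluing of Proposition \ref{prop:realising decorations}: equal decorations cut $f$ and $f'$ into identically glued polygon complexes, and $h,\widetilde{h}$ are the induced identifications. I expect the delicate points to be the existence of $h$ from the ribbon structure and the verification that the monodromy match is exactly what is needed both for $\widetilde{h}$ to exist and for it to fix $A$ pointwise.
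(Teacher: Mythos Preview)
Your proposal is correct and follows essentially the same strategy as the paper's proof: reduce to equal decorations by realising the braid moves and global conjugations on the $(T',v',l')$ side, then build the pair $(h,\widetilde{h})$ from the common decoration and verify that $\widetilde{h}$ fixes $A$ via $\mathit{cyc}$. The only difference is presentational: the paper constructs $h$ and $\widetilde{h}$ by an explicit polygon-cutting (the Ikea approach you allude to at the end --- cut $S^2$ along arcs to a $2|B|$-gon $Q$, match $Q$ to $Q'$ by the shared $\mathit{ord}$, and lift by gluing $d$ copies according to $\mathit{mon}$), whereas you invoke ribbon-structure uniqueness for $h$ and abstract covering-space theory for $\widetilde{h}$; these are two packagings of the same argument.
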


\begin{proof}
\begin{sloppypar}
One direction is easy: suppose $f, f'$ are $(A,B)$-Hurwitz equivalent via homeomorphisms $h$, $\widetilde{h}$ with $h(\Delta) = \Delta'$. Make a choice of $T,v,l$ with $\Delta_T$ isotopic to $\Delta$. The embedding $h(T)$ has dual multicurve $\Delta_{h(T)}$ isotopic to $\Delta'$, $h(v)$ is a vertex, and $l \circ \widetilde{h}^{-1}$ a choice of labelling of the preimage of $h(v)$. It follows from the commutative diagram for $(A,B)$-Hurwitz equivalence (\ref{cd:hurwitz equivalence of branched covers}) that $D_f(T,v,l) = D_{f'}(h(T), h(v), l \circ \widetilde{h}^{-1})$. Thus, $\overline{D}_f(\Delta) = \overline{D}_{f'}(h(\Delta)) = \overline{D}_{f'}(\Delta')$.

For the other direction, suppose that $\overline{D}_f(\Delta) = \overline{D}_{f'}(\Delta')$. This means we can make choices ${T, v, l}$ and ${T', v',l'}$ (where $\Delta_T$ is isotopic to $\Delta$ and $\Delta_{T'}$ is isotopic to $\Delta'$) so that ${D_{f}(T,v,l)}$ and ${D_{f'}(T',v',l')}$ are Hurwitz equivalent decorations. Changing $T'$ by braid moves and postcomposing $l'$ with a permutation, if necessary, we can assume without loss of generality that the two decorations are equal, say ${D_{f}(T, v, l)} = {D_{f'}(T', v', l')} = {(\mathit{ord}, \mathit{mon}, \mathit{cyc})}.$

Pick a point $x$ in $S^2 \smallsetminus T$ and draw an arc $\alpha_b$ from $x$ to $b$ for each point $b \in B$. Choose these arcs so that their interiors are pairwise disjoint and do not intersect $T$. Cut along the arcs $\alpha_b$ to obtain a $2|B|$-gon $Q$ (that is, a topological disk whose boundary is subdivided into $2|B|$ `sides' that meet at `corners') containing a copy of $T$, with a regluing map $g: Q \rightarrow S^2$. For each $b\in B$, there are two adjacent sides of $Q$ that reglue to $\alpha_b$ -- label these $\alpha_{b, \text{out}}$ and $\alpha_{b, \text{in}}$ in that order going anticlockwise. In the same way, pick a point $x'$ in $S^2 \smallsetminus T'$ and cut along arcs $\alpha'_b$ to obtain a $2|B|$-gon $Q'$ that contains a copy of $T'$, with a regluing map $g':Q' \rightarrow S^2$. Label the sides $\alpha'_{b, \text{out}}$ and $\alpha'_{b, \text{in}}$.

The cyclic order of the arcs $\alpha_b$ leaving $x$ -- and therefore the cyclic order of the side labels of $Q$ -- is determined by $\mathit{ord}$. The same is true for the side labels of $Q'$. Thus, the side labels of $Q$ and $Q'$ have the same cyclic order, and there is a homeomorphism $\overline{h}: Q \rightarrow Q'$ that takes $\alpha_{b, \text{out}}$ to $\alpha_{b, \text{out}}'$ and takes $\alpha_{b, \text{in}}$ to $\alpha_{b, \text{in}}'$.

We also wish this homeomorphism to (1) be compatible with the regluing maps ${g:Q \rightarrow S^2}$ and ${g':Q' \rightarrow S^2}$, and (2) take $T$ to $T'$. It is straightforward to see that we can choose $\overline{h}$ to satisfy these properties. Gluing back up the source and target, $\overline{h}$ gives a homeomorphism $h: (S^2,B) \rightarrow (S^2,B)$ such that $h(T) = T'$. We now show that this homeomorphism lifts.

Take $d$ copies of the polygon $Q$, and label them $Q_1, \ldots, Q_d$. Because the interior of $Q$ is simply connected and $f$ is a degree-$d$ cover away from $B$, the preimage $f^{-1}(g(\op{int}(Q)))$ is the disjoint union of $d$ copies of $g(\op{int}(Q))$. Let $g_i = f^{-1} \circ g: \op{int}(Q_i) \rightarrow X$ be the branch that glues $\op{int}(Q_i)$ to the subset of $X$ containing $v_i$. The gluing map can be extended to the entire polygon, $g_i: Q_i \rightarrow X$. Similarly, define gluing maps $g_i' = f^{-1} \circ g': Q_i' \rightarrow X$.

This lets us view the surface $X$ as a union of the polygons $Q_i$, glued along their sides. The gluing instructions are encoded in the decoration $(\mathit{ord}, \mathit{mon}, \mathit{cyc})$: the side $\alpha_{b, \text{out}}$ of $Q_i$ is glued to the side $\alpha_{b, \text{in}}$ of $Q_j$, where $j=(\mathit{mon}(b))(i)$. We can similarly view $X$ as a union of the polygons $Q_i'$, glued along their sides with the same gluing instructions.

Because the gluing instructions are the same, the maps $\overline{h}: Q_i \rightarrow Q_i'$ descend to a homeomorphism $\widetilde{h}: X \rightarrow X$. By definition, $\widetilde{h}$ is a lift of $h$.

We claim that $\widetilde{h}$ fixes the points of $A \subset X$. Take $a \in A$. If $\mathit{cyc}(a) = (i_1\,\ldots\,i_k) \in \op{Cycles}(\mathit{mon}(b))$, then the corner labelled $b$ in each of the polygons $Q_{i_1}, \ldots, Q_{i_k}$ glues to the point $a \in X$. Similarly, the corner labelled $b$ in each of the polygons $Q_{i_1}', \ldots, Q_{i_k}'$ glues to $a$. $\widetilde{h}$ descends from the maps $\overline{h}: Q_i \rightarrow Q_i'$, and so $\widetilde{h}(a) = a$. 
\end{sloppypar}
\end{proof}

This proposition immediately implies that the function $\overline{D}_f$ outputs the same Hurwitz equivalence class of decorated trees on orbits of multicurves under the action of $\op{LMod}_f$, and that the resulting map on orbits is injective.

\begin{corollary}\label{cor:corollary of main proposition}
Let $f:(X,A) \rightarrow (S^2,B)$ be a branched cover and let $\Delta$, $\Delta'$ be multicurves in $(S^2,B)$. Then $\op{LMod}_f(\Delta) = \op{LMod}_f(\Delta')$ if and only if $\overline{D}_f(\Delta) = \overline{D}_f(\Delta')$.
\end{corollary}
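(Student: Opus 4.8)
The plan is to deduce the corollary directly from Proposition \ref{prop:main prop} by specialising to the case $f' = f$. With this substitution, the proposition reads: $\overline{D}_f(\Delta) = \overline{D}_f(\Delta')$ if and only if $f$ is $(A,B)$-Hurwitz equivalent to itself via homeomorphisms $h, \widetilde{h}$ satisfying $h(\Delta) = \Delta'$. The whole content of the corollary will then come from recognising these self-equivalences as precisely the liftable mapping classes.

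First I would unwind the definition of the liftable mapping class group. By definition, a homeomorphism $h: (S^2,B) \to (S^2,B)$ fixing $B$ pointwise is liftable under $f$ exactly when there exists a homeomorphism $\widetilde{h}:(X,A) \to (X,A)$ fixing $A$ pointwise with $h \circ f = f \circ \widetilde{h}$ -- that is, exactly when $h$ and $\widetilde{h}$ constitute an $(A,B)$-Hurwitz equivalence of $f$ with itself. The isotopy classes of such $h$ are by definition the elements of the subgroup $\op{LMod}_f \leq \op{PMod}_{0,B}$. Thus the existence of a self-Hurwitz-equivalence $(h, \widetilde{h})$ of $f$ is the same as the existence of a class $[h] \in \op{LMod}_f$.

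Next I would translate the geometric condition $h(\Delta) = \Delta'$ into the orbit equality. Since $\overline{D}_f$ is defined on isotopy classes of multicurves, the equality $h(\Delta) = \Delta'$ is an equality of isotopy classes, which is precisely the statement that $[h]$ carries the multicurve $\Delta$ to $\Delta'$ under the action of $\op{PMod}_{0,B}$ restricted to $\op{LMod}_f$. Hence such an $h \in \op{LMod}_f$ exists if and only if $\Delta$ and $\Delta'$ lie in the same $\op{LMod}_f$-orbit, i.e. $\op{LMod}_f(\Delta) = \op{LMod}_f(\Delta')$. Chaining these equivalences together with the specialised proposition completes the argument. There is essentially no obstacle here: the corollary is a formal consequence of Proposition \ref{prop:main prop}, and the only care needed is in matching the three formulations -- self-Hurwitz-equivalence of $f$, membership in $\op{LMod}_f$, and the orbit condition -- which are definitionally equivalent once one keeps track of the fact that everything is taken up to isotopy.
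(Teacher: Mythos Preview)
Your proposal is correct and matches the paper's approach exactly: the paper presents this corollary as an immediate consequence of Proposition \ref{prop:main prop} (without giving a separate proof), and your argument simply spells out the specialisation $f' = f$ together with the definitional identification of self-$(A,B)$-Hurwitz-equivalences of $f$ with elements of $\op{LMod}_f$.
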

\end{subsection}

\begin{subsection}{Orbits of multicurves and decorations} 
\begin{sloppypar}
Let $X$ be a compact surface of genus $g$, and let $A \subset X$ and $B \subset S^2$ be finite subsets. Fix a portrait ${P = (g,d,B,A, \phi:A \rightarrow B,\op{br},\op{rm})}$, and pick representatives $f_1, \ldots, f_N: (X,A) \rightarrow (S^2,B)$ of the $(A,B)$-Hurwitz equivalence classes of branched covers realising $P$. Corollary \ref{cor:corollary of main proposition} tells us that the following function on \textit{orbits} of multicurves is well-defined.
\begin{align*}
\widetilde{D}_{f_1, \ldots, f_N}: \bigsqcup_{i=1}^N \op{Mult}_{0,B} \big/ \op{LMod}_{f_i} &\longrightarrow \op{Stab}_{0,B}(P)\\
\op{LMod}_{f_i}(\Delta) &\longmapsto \overline{D}_{f_i}(\Delta).
\end{align*} 
\end{sloppypar}

\begin{proposition}\label{prop:Dbar is a bijection}
The function $\widetilde{D}_{f_1, \ldots, f_N}$ is a bijection. 
\end{proposition}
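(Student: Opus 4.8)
The plan is to prove injectivity and surjectivity separately; I expect both to follow by combining Proposition \ref{prop:main prop} with the realisation result Proposition \ref{prop:realising decorations}, using crucially that $f_1, \ldots, f_N$ represent \emph{distinct} $(A,B)$-Hurwitz equivalence classes.

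First I would establish injectivity. Suppose $\overline{D}_{f_i}(\Delta) = \overline{D}_{f_j}(\Delta')$. By Proposition \ref{prop:main prop}, $f_i$ and $f_j$ are $(A,B)$-Hurwitz equivalent via some $h, \widetilde{h}$ with $h(\Delta) = \Delta'$. Since the $f_k$ represent distinct Hurwitz equivalence classes, this forces $i = j$, and then $h, \widetilde{h}$ exhibit a Hurwitz equivalence of $f_i$ with itself, so $h$ is liftable and $[h] \in \op{LMod}_{f_i}$. As $h(\Delta) = \Delta'$, the isotopy classes $\Delta$ and $\Delta'$ share an $\op{LMod}_{f_i}$-orbit, giving $\op{LMod}_{f_i}(\Delta) = \op{LMod}_{f_i}(\Delta')$, so the two points of the domain coincide.

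Next I would prove surjectivity. Given a class $[(\mathit{ord}, \mathit{mon}, \mathit{cyc})] \in \op{Stab}_{0,B}(P)$, Proposition \ref{prop:realising decorations} supplies a branched cover $f$ realising $P$, an embedding $T$ of $\mathscr{T}$, a vertex $v$ and a labelling $l$ with $D_f(T,v,l) = (\mathit{ord}, \mathit{mon}, \mathit{cyc})$. Setting $\Delta = \Delta_T$, I would note that cutting along the dual multicurve separates the vertices of $T$, so $\mathscr{T}_\Delta \cong \mathscr{T}$ and hence, by definition of $\overline{D}_f$, we have $\overline{D}_f(\Delta) = [D_f(T,v,l)]$. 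Because $f$ realises $P$, it is $(A,B)$-Hurwitz equivalent to some $f_i$, say via $h, \widetilde{h}$, and Proposition \ref{prop:main prop} gives $\overline{D}_f(\Delta) = \overline{D}_{f_i}(h(\Delta))$. Thus $[(\mathit{ord}, \mathit{mon}, \mathit{cyc})] = \widetilde{D}_{f_1,\ldots,f_N}(\op{LMod}_{f_i}(h(\Delta)))$ lies in the image.

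The genuinely difficult work lives in Propositions \ref{prop:main prop} and \ref{prop:realising decorations}, so I expect the main obstacle here to be careful bookkeeping rather than a new idea. The two points I would be most careful about are: (a) verifying that the homeomorphism $h$ produced by Proposition \ref{prop:main prop} actually lands in $\op{LMod}_{f_i}$ in the injectivity step, which relies on the fact that a self-Hurwitz-equivalence of $f_i$ is by definition liftable; and (b) the identification $\mathscr{T}_{\Delta_T} \cong \mathscr{T}$ in the surjectivity step, which is the standard duality between an embedded stable tree and its dual multicurve developed in Section \ref{section:embedded stable trees}.
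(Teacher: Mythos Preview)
Your proposal is correct and follows essentially the same route as the paper's proof: injectivity via Proposition \ref{prop:main prop} forcing $i=j$ and $h\in\op{LMod}_{f_i}$, and surjectivity via Proposition \ref{prop:realising decorations} followed by Proposition \ref{prop:main prop} to pass from $f$ to its representative $f_i$. The extra bookkeeping you flag in points (a) and (b) is handled exactly as you describe.
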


\begin{proof}
We abbreviate $\widetilde{D}_{f_1, \ldots, f_N}$ to $\widetilde{D}$ throughout the proof.

\textit{Injectivity.} Suppose $\widetilde{D}(\op{LMod}_{f_i}(\Delta)) =  \widetilde{D}(\op{LMod}_{f_j}(\Delta'))$. Then $\overline{D}_{f_i}(\Delta) = \overline{D}_{f_j}(\Delta')$. By Proposition \ref{prop:main prop}, $f_i$ and $f_j$ are $(A,B)$-Hurwitz equivalent by homeomorphisms $h, \widetilde{h}$ with $h(\Delta) = \Delta'$. Because we chose one representative for each $(A,B)$-Hurwitz equivalence class, $f_i = f_j$ and $h$ is a liftable homeomorphism. Thus, $\Delta' \in \op{LMod}_{f_i}(\Delta)$.

\textit{Surjectivity.} Let $(\mathit{ord}, \mathit{mon}, \mathit{cyc})$ be a $P$-decorated tree. By Proposition \ref{prop:realising decorations}, there exists a tree $T$, a vertex $v$ and a labelling $l$ such that $D_f(T,v,l) = (\mathit{ord}, \mathit{mon}, \mathit{cyc})$. The branched cover $f$ is $(A,B)$-Hurwitz equivalent to one of the representatives $f_i$, via some homeomorphisms $h,\widetilde{h}$. By Proposition \ref{prop:main prop}, $\overline{D}_{f_i}(h(\Delta_T)) = \overline{D}_f(\Delta_T) = [(\mathit{ord}, \mathit{mon}, \mathit{cyc})]$, and so $\widetilde{D}(\op{LMod}_{f_i}(h(\Delta)) = [(\mathit{ord}, \mathit{mon}, \mathit{cyc})]$. 
\end{proof}
\end{subsection}

A proof of the main theorem follows immediately.

\begin{repeatmaintheorem}
Let $\mathcal{H}$ be a Hurwitz space that parametrises maps to $\mathbb{P}^1$, with $P$ its defining portrait. The irreducible strata of the compactification $\overline{\mathcal{H}}$ are in bijection with the set $\op{Stab}_{0,B}(P)$ of Hurwitz equivalence classes of $P$-decorated trees. Furthermore, containment of closures of strata in $\overline{\mathcal{H}}$ corresponds to edge contraction of $P$-decorated trees.
\end{repeatmaintheorem}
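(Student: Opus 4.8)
The plan is to assemble the theorem from the structural results already established, so that essentially no new work is required. First I would invoke Proposition \ref{prop:components of Hurwitz space} to write $\overline{\mathcal{H}}$ as the disjoint union $\bigsqcup_{i=1}^{N} \overline{\mathcal{H}}_{f_i}$ of its connected components, where $f_1, \ldots, f_N : (X,A) \rightarrow (S^2,B)$ are representatives of the $(A,B)$-Hurwitz equivalence classes of branched covers realising $P$. Since every irreducible stratum of $\overline{\mathcal{H}}$ is connected and hence lies in exactly one connected component, the set of irreducible strata of $\overline{\mathcal{H}}$ is the disjoint union of the sets of irreducible strata of the individual $\overline{\mathcal{H}}_{f_i}$.

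For the bijection, I would apply Proposition \ref{prop:HV10 statement} to each component: it identifies the irreducible strata of $\overline{\mathcal{H}}_{f_i} \cong \overline{\mathcal{T}}_{0,B} / \op{LMod}_{f_i}$ with the orbits $\op{LMod}_{f_i}(\Delta)$ of multicurves, i.e.\ with $\op{Mult}_{0,B}/\op{LMod}_{f_i}$. Taking the disjoint union over $i$, the irreducible strata of $\overline{\mathcal{H}}$ are in bijection with $\bigsqcup_{i=1}^{N} \op{Mult}_{0,B}/\op{LMod}_{f_i}$. The map $\widetilde{D}_{f_1, \ldots, f_N}$ of Proposition \ref{prop:Dbar is a bijection} then carries this set bijectively onto $\op{Stab}_{0,B}(P)$. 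Composing the two bijections yields the desired correspondence between irreducible strata of $\overline{\mathcal{H}}$ and Hurwitz equivalence classes of $P$-decorated trees.

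For the containment statement, the key observation is that the closure of a stratum remains inside its connected component, so a containment of one stratum in the closure of another can only occur when both strata lie in the same $\overline{\mathcal{H}}_{f_i}$; across distinct components no containments arise. Within a fixed component, Proposition \ref{prop:HV10 statement} says that the stratum indexed by $\Delta$ lies in the closure of the stratum indexed by $\Delta'$ precisely when $\Delta' \subset \Delta$. By the discussion of dual trees in Section \ref{subsubsec:stable trees}, $\Delta' \subset \Delta$ means $\mathscr{T}_{\Delta'}$ is obtained from $\mathscr{T}_\Delta$ by contracting the edges dual to the curves in $\Delta \smallsetminus \Delta'$, and Lemma \ref{lemma:Dfbar behaves well with containment} upgrades this to the decorated level: $\overline{D}_{f_i}(\Delta)$ contracts to $\overline{D}_{f_i}(\Delta')$ as a Hurwitz equivalence class of $P$-decorated trees. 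Translating through the bijection just constructed, containment of closures of strata in $\overline{\mathcal{H}}$ corresponds exactly to edge contraction of $P$-decorated trees.

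The one point requiring care — which I would treat as the main (if modest) obstacle — is bookkeeping around the disjoint-union structure and the direction of contraction. One must check that gluing the per-component bijections of Proposition \ref{prop:HV10 statement} along the single global bijection $\widetilde{D}_{f_1, \ldots, f_N}$ is consistent, that deeper strata correspond to multicurves with more curves and hence to decorated trees with more edges (so that passing to a shallower stratum is genuinely an edge contraction and not an expansion), and that no containment relation bridges two distinct components. Each of these is immediate from the cited results, so once the indexing is set up correctly the theorem follows at once.
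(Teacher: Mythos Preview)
Your proposal is correct and follows essentially the same approach as the paper: the paper's proof is a one-sentence appeal to Proposition~\ref{prop:HV10 statement} (strata are orbits of multicurves, with containment descending from reverse inclusion) and Proposition~\ref{prop:Dbar is a bijection} (the bijection $\widetilde{D}_{f_1,\ldots,f_N}$), and you have simply spelled out the same assembly in more detail, including the component decomposition from Proposition~\ref{prop:components of Hurwitz space} and the explicit invocation of Lemma~\ref{lemma:Dfbar behaves well with containment} for the containment direction.
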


\begin{proof}
Because we have identified the strata of $\overline{\mathcal{H}}$ as orbits of multicurves (Proposition \ref{prop:HV10 statement}), with containment of closures of strata descending from containment of multicurves, the main theorem now follows as an exact translation of the previous proposition.
\end{proof}
\end{section}

\begin{remark}
\begin{sloppypar}
The bijection of Theorem \ref{thm:main theorem} is canonical. Although we choose representative branched covers $f_1, \ldots, f_N$ realising $P$ to build $\widetilde{D}_{f_1, \ldots, f_N}$, the bijection from the set of strata of $\overline{\mathcal{H}}$ to $\op{Stab}_{0,B}(P)$ is independent of these choices: suppose the stratum $\mathcal{S}$ of $\overline{\mathcal{H}}$ is identified with $\op{LMod}_{f_i}(\Delta)$ under Proposition \ref{prop:HV10 statement}. Then $\mathcal{S}$ is sent to $\overline{D}_{f_i}(\Delta) \in \op{Stab}_{0,B}(P)$. Now, suppose $f_i$ and $f_i'$ are $(A,B)$-Hurwitz equivalent via homeomorphisms $h, \widetilde{h}$. Choosing $f_i'$ instead of $f_i$, it follows from Lemma \ref{lemma:pif and pifprime} that the stratum $\mathcal{S}$ is identified with $\op{LMod}_{f_i'}(h(\Delta))$ under Proposition \ref{prop:HV10 statement} -- so $\mathcal{S}$ is sent to $\overline{D}_{f_i'}(h(\Delta)) \in \op{Stab}_{0,B}(P)$. Indeed, $\overline{D}_{f_i}(\Delta) = \overline{D}_{f_i'}(h(\Delta))$ by Proposition \ref{prop:main prop}.
\end{sloppypar}
\end{remark}

\newpage
\begin{section}{The strata of \texorpdfstring{$\overline{\mathcal{H}}$}{H} under the target and source maps}\label{sec:strata of H under target and source}

Let $X$ be a compact surface of genus $g$, and let $A \subset X$ and $B \subset S^2$ be finite subsets with $2 - 2g - |A| < 0$ and $|B| \geq 3$. Fix a portrait ${P = (g,d,B,A, \phi:A \rightarrow B,\op{br},\op{rm})}$, with $\mathcal{H}$ the associated Hurwitz space. Recall that the natural target and source maps on $\mathcal{H}$ extend to
$$\pi_B: \overline{\mathcal{H}} \rightarrow \overline{\mathcal{M}}_{0,B}, \quad\quad \pi_A: \overline{\mathcal{H}} \rightarrow \overline{\mathcal{M}}_{g,A},$$
respectively (Section \ref{subsec:HV10}). In this section, we explain the connection to combinatorial admissible covers and we explicitly describe how the strata of $\overline{\mathcal{H}}$ map under $\pi_B$ and $\pi_A$ in terms of $P$-decorated trees.

\begin{subsection}{Combinatorial admissible covers}\label{subsec:relation to combinatorial admissible covers}
\begin{sloppypar}
A \textit{combinatorial admissible cover} is a morphism of graphs ${\Theta: \mathscr{G} \rightarrow \mathscr{T}}$ together with a labelling of the edges and legs of $\mathscr{G}$ by positive-integer \textit{expansion factors}\footnote{An \textit{admissible cover} \cite{harris1982kodaira} of nodal Riemann surfaces induces a combinatorial admissible cover: the graph morphism is the morphism of dual graphs, and the expansion factors on legs and edges are the local degrees at marked points and nodes, respectively. A combinatorial admissible cover that arises in this way is called \textit{realisable}. A combinatorial admissible cover is realisable if and only if it is induced by a $P$-decorated tree, in the sense of Section \ref{subsubsec:combinatorial admissible covers from decorated trees}.}. The data must also satisfy a \textit{harmonicity} condition \cite[Section 2.2.1]{cavalieri2016tropicalizing}, but this is automatic for the combinatorial admissible covers that we consider.
\end{sloppypar}

\begin{subsubsection}{Combinatorial admissible covers from $P$-decorated trees}\label{subsubsec:combinatorial admissible covers from decorated trees}
A Hurwitz equivalence class of $P$-decorated trees $\theta \in \op{Stab}_{0,B}(P)$ induces a combinatorial admissible cover $\Theta: \mathscr{G} \rightarrow \mathscr{T}$ as follows. Suppose $\theta = \overline{D}_f(\Delta) \in \op{Stab}_{0,B}(P)$, where $\Delta$ is a multicurve on $(S^2,B)$ and ${f:(X,A) \rightarrow (S^2,B)}$ is a branched cover that realises $P$ (by Proposition \ref{prop:realising decorations}, such a $\Delta$ and $f$ exist). Then
\begin{itemize}
\item $\mathscr{T}$ is the $B$-marked dual tree of $\Delta$. 
\item $\mathscr{G}$ is the $A$-marked dual graph of $f^{-1}(\Delta)$ (a union of simple closed curves in $X$). 
\item $\Theta: \mathscr{G} \rightarrow \mathscr{T}$ is the natural morphism of dual graphs induced by $f$. 
\item The expansion factor on an edge of $\mathscr{G}$ is the degree with which the corresponding curve of $f^{-1}(\Delta)$ maps under $f$, and the expansion factor on the leg $a \in A$ is the local degree $\op{ram}_f(a)$.
\end{itemize}
\end{subsubsection}
\end{subsection}

\begin{subsection}{The target and source maps on the strata of \texorpdfstring{\boldmath $\overbar{\mathcal{H}}$}{H}}\label{subsec:action of piA and piB on strata} 
Suppose $\theta \in \op{Stab}_{0,B}(P)$ induces the combinatorial admissible cover $\Theta: \mathscr{G} \rightarrow \mathscr{T}$.

\begin{subsubsection}{The target map} The target map $\pi_B$ takes the stratum of $\overline{\mathcal{H}}$ indexed by $\theta$ surjectively to the stratum of $\overline{\mathcal{M}}_{0,B}$ indexed by $\mathscr{T}$. 
\end{subsubsection}

\begin{subsubsection}{The source map}
The source map $\pi_A$ takes the stratum of $\overline{\mathcal{H}}$ indexed by $\theta$ to the stratum of $\overline{\mathcal{M}}_{g,A}$ indexed by $\mathscr{G}'$, where $\mathscr{G}'$ is the stabilisation of $\mathscr{G}$. The restriction of $\pi_A$ to this stratum is not necessarily surjective. We now explain how to construct $\mathscr{G}$ and $\mathscr{G}'$ directly, given a $P$-decorated tree $\theta$. Suppose $\theta = [(\mathit{ord}, \mathit{mon}, \mathit{cyc})]$.
\begin{description}[leftmargin=3em, style=nextline]
\item[$V(\mathscr{G})$] is the set of pairs $(v, O)$, where $v$ is a vertex of $\mathscr{T}$ and $O$ is an orbit of $\{1, \ldots, d\}$ under the action of the subgroup $\langle \mathit{mon}(h_1), \ldots, \mathit{mon}(h_k) \rangle \leq S_d$, with $h_1, \ldots, h_k$ the half-edges of $\mathscr{T}$ incident with $v$. The vertex $(v,O)$ receives a \textit{genus weight} $g_{(v,O)}$: it is the unique nonnegative integer that satisfies the \textit{local} Riemann-Hurwitz equation
$$\sum_{c} (\op{len}(c) - 1) - 2g_{(v,O)} = 2|O| - 2,$$ 
where the sum is over cycles $c$ of $\mathit{mon}(h_1), \ldots, \mathit{mon}(h_k)$ with $c \subset O$.
\item[$H(\mathscr{G})$] consists of cycles of permutations (excluding the cycles of leg permutations that are not labelled by $A$):
$$H(\mathscr{G}) \subset \bigsqcup_{h \in H(\mathscr{T})} \op{Cycles}(\mathit{mon}(h)).$$
If $c$ is a cycle of $\mathit{mon}(h)$ with $\op{base}(h)=v$ and $c \subset O$, then $c$ is a half-edge incident with $(v, O)$. If $c$ and $c'=c^{-1}$ are cycles on two half-edges of $\mathscr{T}$ that form an edge in $\mathscr{T}$, then the half-edges $c$ and $c'$ form an edge in $\mathscr{G}$. If $c$ is labelled $\mathit{cyc}(c) = a$, then the half-edge $c$ is a leg labelled $a$. 
\end{description}
The graph $\mathscr{G}$ is not necessarily stable -- we set $\mathscr{G}'$ to be its stabilisation (see Section \ref{subsubsec:stable graphs}). The graph $\mathscr{G}' \in \op{Stab}_{g,A}$ is independent of the representative $(\mathit{ord}, \mathit{mon}, \mathit{cyc})$. Figure \ref{fig:source graph construction} illustrates the construction of $\mathscr{G}$ and $\mathscr{G}'$ from a $P$-decorated tree.
\end{subsubsection}

\begin{figure}[ht!]
\centering
\includegraphics[scale=1]{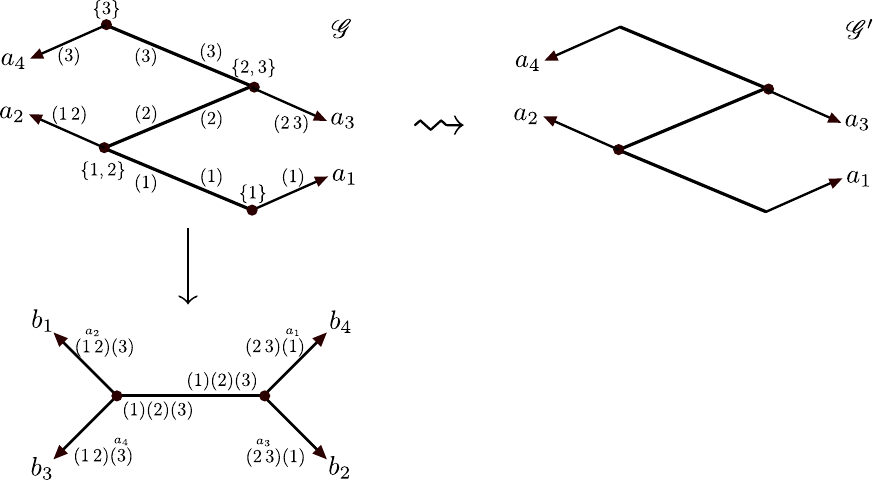}
\caption{Suppose $\theta$ is represented by the $P$-decorated tree at bottom-left. Half-edges of $\mathscr{G}$ are cycles of permutations decorating $\mathscr{T}$. The stabilisation $\mathscr{G}'$ has two vertices, one edge and four legs. All genus weights of vertices are zero.}
\label{fig:source graph construction}
\end{figure}

\end{subsection}
\end{section}

\newpage
\begin{section}{Tropical Hurwitz space}\label{sec:Htrop}
\begin{sloppypar}
In this section, given a Hurwitz space $\mathcal{H}$, we construct an extended cone complex $\overline{\mathcal{H}}^\text{trop}$ that has one $k$-dimensional cone for each codimension-$k$ stratum of $\overline{\mathcal{H}}$, with containment of cones given by containment of closures of strata. We identify this extended cone complex with $\overline{\Sigma}(\overline{\mathcal{H}})$, the skeleton of the Berkovich analytification of $\overline{\mathcal{H}}$. We also define tropicalisations
$${\pi_B^\text{trop}: \overline{\mathcal{H}} \rightarrow \overline{\mathcal{M}}_{0,B}^\text{trop}},\quad\quad {\pi_A^\text{trop}: \overline{\mathcal{H}} \rightarrow \overline{\mathcal{M}}_{g,A}^\text{trop}}$$
of the target and source maps.
\end{sloppypar}

Throughout, let $X$ be a compact surface of genus $g$, and let $A \subset X$ and $B \subset S^2$ be finite subsets with $2 - 2g - |A| < 0$ and $|B| \geq 3$. Fix a portrait ${P = (g,d,B,A, \phi:A \rightarrow B,\op{br},\op{rm})}$ and let $\mathcal{H}$ be the associated Hurwitz space.

\begin{subsection}{Tropical Hurwitz space \texorpdfstring{\boldmath $\overline{\mathcal{H}}^\text{trop}$}{Htrop}}\label{subsec:Htrop definition}
We start by defining the extended cones, one for each stratum of $\overline{\mathcal{H}}$. Suppose $\theta \in \op{Stab}_{0,B}(P)$ is a Hurwitz equivalence class of $P$-decorations of $\mathscr{T} \in \op{Stab}_{0,B}$. The extended cone associated to $\theta$ is ${\overline{\sigma}_\theta = (\mathbb{R}_{\geq 0} \cup \{\infty\})^{E(\mathscr{T})}}$, together with its standard integer lattice.

Next, we describe the gluings of faces. By Lemma \ref{lemma:contraction of Heq classes is well defined}, contraction of $\mathscr{T}$ yields a well-defined Hurwitz equivalence class of $P$-decorations $\theta'$ of the contracted tree $\mathscr{T}'$. The inclusion $E(\mathscr{T}') \hookrightarrow E(\mathscr{T})$ induces an isomorphism from $\overline{\sigma}_{\theta'}$ to a face of $\overline{\sigma}_\theta$. We set 
$$\overline{\mathcal{H}}^\text{trop} \,\,= \bigsqcup_{\theta \in \op{Stab}_{0,B}(P)} \overline{\sigma}_\theta \,\,\,\big/ \sim,$$
where faces are glued as prescribed by edge contraction\footnote{Replacing extended cones with standard cones $\sigma_\theta = (\mathbb{R}_{\geq 0})^{E(\mathscr{T})}$ in the construction yields a cone complex $\mathcal{H}^\text{trop}$ which is exactly the cone over the boundary complex of $\overline{\mathcal{H}}$.}.

The inclusion $\mathcal{H} \subset \overline{\mathcal{H}}$ is a toroidal embedding of Deligne-Mumford stacks\footnote{We work with $\overline{\mathcal{H}}$ as the complex orbifold associated to the usual Deligne-Mumford stack, but the stratifications of the orbifold and the stack are the same and we may apply the results of \cite{abramovich2015tropicalization}.}. In this setting, Abramovich, Caporaso and Payne interpret the \textit{skeleton} $\overline{\Sigma}(\overline{\mathcal{H}})$ as the tropicalisation of $\overline{\mathcal{H}}$. The skeleton lives inside the \textit{Berkovich analytification} $\overline{\mathcal{H}}^\text{an}$ as the image of a canonically defined retraction map $p_{\overline{\mathcal{H}}}: \overline{\mathcal{H}}^\text{an} \rightarrow \overline{\mathcal{H}}^\text{an}$. Combining Theorem \ref{thm:main theorem} with Proposition 6.2.6 of \cite{abramovich2015tropicalization} and analysis carried out by Cavalieri, Markwig and Ranganathan for the case of $\overline{\mathcal{H}}$ \cite[Section 4]{cavalieri2016tropicalizing}, we obtain the following corollary.

\begin{repeatskeletoncorollary}
Let $\mathcal{H}$ be a Hurwitz space parametrising maps to $\mathbb{P}^1$. The extended cone complex $\overline{\mathcal{H}}^\text{trop}$ and the skeleton of the Berkovich analytification $\overline{\Sigma}(\overline{\mathcal{H}})$ are isomorphic as extended cone complexes with integral structure.
\end{repeatskeletoncorollary}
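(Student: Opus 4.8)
The plan is to deduce the corollary from the general machinery of Abramovich, Caporaso and Payne, feeding it the combinatorial description of the stratification supplied by Theorem \ref{thm:main theorem}. Since $\mathcal{H} \subset \overline{\mathcal{H}}$ is a toroidal embedding of Deligne--Mumford stacks, \cite[Proposition 6.2.6]{abramovich2015tropicalization} expresses the skeleton $\overline{\Sigma}(\overline{\mathcal{H}})$ as a colimit of extended cones, one for each stratum of $\overline{\mathcal{H}}$, where the face relations between cones are dictated by the specialisation (closure-containment) order on strata. Because $\mathcal{H}$ parametrises maps to $\mathbb{P}^1$, the boundary $\overline{\mathcal{H}} \smallsetminus \mathcal{H}$ is a \emph{simple} normal crossings divisor; consequently no cone needs to be quotiented by automorphisms and the colimit is an honest extended cone complex, rather than a generalised one. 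Theorem \ref{thm:main theorem} then identifies the index set of this colimit with $\op{Stab}_{0,B}(P)$, so it remains only to identify the cone attached to each $\theta \in \op{Stab}_{0,B}(P)$, together with its integral structure and its face maps, with the data used to build $\overline{\mathcal{H}}^\text{trop}$ in Section \ref{subsec:Htrop definition}.

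First I would identify the cones. Fix $\theta \in \op{Stab}_{0,B}(P)$, the index of a stratum $\mathcal{S}$, and let $\mathscr{T}$ be its underlying stable tree. The cone that the above framework attaches to $\mathcal{S}$ is $(\mathbb{R}_{\geq 0} \cup \{\infty\})^r$, where $r$ is the number of branches of the boundary divisor through $\mathcal{S}$, with integral structure generated by the divisorial valuations of those branches. The key local input is the analysis of \cite[Section 4]{cavalieri2016tropicalizing}: near a boundary point the degeneration pinches the target curve along the curves of the multicurve $\Delta$, so that each edge $e$ of $\mathscr{T}$ (equivalently each curve $\delta_e$, equivalently each node of the target) corresponds to exactly one branch of the boundary of $\overline{\mathcal{H}}$ through $\mathcal{S}$. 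This gives a canonical bijection between the branches and $E(\mathscr{T})$, so $r = |E(\mathscr{T})|$ and the cone is identified with $\overline{\sigma}_\theta = (\mathbb{R}_{\geq 0} \cup \{\infty\})^{E(\mathscr{T})}$. Because $\overline{\mathcal{H}}$ is the Abramovich--Corti--Vistoli normalisation, these branches are reduced and meet transversally, so each contributes a single primitive generator and the lattice is the standard $\mathbb{Z}^{E(\mathscr{T})}$ recorded in the definition of $\overline{\sigma}_\theta$.

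Finally I would match the face maps. A face of $\overline{\sigma}_\theta$ corresponds to a stratum whose closure contains $\mathcal{S}$, obtained by smoothing a subset of the target nodes; under the bijection of the previous step this is exactly the contraction of the corresponding edges of $\mathscr{T}$. By the containment half of Theorem \ref{thm:main theorem}, closure-containment of strata corresponds to edge contraction of $P$-decorated trees, and by Lemma \ref{lemma:contraction of Heq classes is well defined} this contraction is well defined on Hurwitz equivalence classes; these are precisely the gluings used to assemble $\overline{\mathcal{H}}^\text{trop}$. Compatibility of the face maps with the standard lattices is immediate, and the resulting identifications of cones, lattices and face maps assemble into the required isomorphism $\overline{\mathcal{H}}^\text{trop} \cong \overline{\Sigma}(\overline{\mathcal{H}})$ of extended cone complexes with integral structure.

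The main obstacle is conceptual rather than computational: the local cone-and-lattice computation is already carried out in \cite[Section 4]{cavalieri2016tropicalizing}, but there the cones are indexed by combinatorial admissible covers, which fail to separate the irreducible strata of $\overline{\mathcal{H}}$ and so yield only a finite-to-one surjection onto $\overline{\mathcal{H}}^\text{trop}_\text{CMR}$ rather than the skeleton. The crux is therefore to verify that substituting the \emph{refined} stratification of Theorem \ref{thm:main theorem} into the colimit produces genuinely distinct, separately glued cones for decorated trees that happen to share a combinatorial admissible cover, so that the abstract colimit reproduces the explicit edge-contraction gluing of Section \ref{subsec:Htrop definition} exactly. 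A secondary point requiring care is that the integral structure on each cone is the standard lattice and not a sublattice rescaled by the expansion factors: one must show that the expansion factors enter only through the dilation appearing in $\pi_B^\text{trop}$, and not through the integral structure of $\overline{\sigma}_\theta$ itself. This is exactly where the Abramovich--Corti--Vistoli normalisation — which replaces the several source-node uniformisers over a given target node by a single one — together with the simple normal crossings property specific to targets $\mathbb{P}^1$ is used.
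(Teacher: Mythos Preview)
Your proposal is correct and follows essentially the same route as the paper: both invoke \cite[Proposition 6.2.6]{abramovich2015tropicalization} to express $\overline{\Sigma}(\overline{\mathcal{H}})$ as a colimit of (generalised) extended cones indexed by the strata, feed in Theorem \ref{thm:main theorem} to index these by $\op{Stab}_{0,B}(P)$, and appeal to \cite[Section 4]{cavalieri2016tropicalizing} for the local identification of each cone with $\overline{\sigma}_\theta$. The only notable difference is in how the potential quotient by the monodromy group $H_\theta$ is dispatched: you invoke the simple normal crossings property directly, whereas the paper identifies $H_\theta$ with $\op{Aut}(\Theta)$ via \cite{cavalieri2016tropicalizing} and then observes that this group acts trivially on $\overline{\sigma}_\theta$ because the $B$-marked stable tree $\mathscr{T}$ has no non-trivial automorphisms.
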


\begin{proof}
By Proposition 6.2.6 of \cite{abramovich2015tropicalization}, the skeleton decomposes as a union of generalised extended cones -- one for each irreducible stratum: 
$$\overline{\Sigma}(\overline{\mathcal{H}}) \,\, \cong \bigsqcup_{\theta \in \op{Stab}_{0,B}(P)} (\overline{\sigma}_\theta \,/\, H_\theta) \,\,\, \big/ \sim,$$
with faces glued as prescribed by edge contraction. The group $H_\theta$ is the \textit{monodromy group} of a point in the stratum $\theta$ \cite[Definition 6.2.2]{abramovich2015tropicalization}. By Section 4 of \cite{cavalieri2016tropicalizing} $H_\theta$ is the group $\op{Aut}(\Theta)$ of automorphisms of the combinatorial admissible cover $\Theta$ induced by $\theta$ (see Section \ref{subsec:relation to CMR}), but because the $B$-marked stable tree $\mathscr{T}$ has no non-trivial automorphisms, $\op{Aut}(\Theta)$ acts trivially on $\overline{\sigma}_\theta$.
\end{proof}
\end{subsection}

\begin{subsection}{Tropical moduli space \texorpdfstring{\boldmath $\overline{\mathcal{M}}_{g,A}^\text{trop}$}{MgAtrop}}\label{subsec:MgAtrop}
By a \textit{metrisation} of an $A$-marked graph $\mathscr{G}$, we mean an assignment $m_{\mathscr{G}}$ of a length in $\mathbb{R}_{\geq 0} \cup \{\infty\}$ to each edge, and a length of $\infty$ to each leg, up to automorphisms of the graph. A metrisation can be identified with a point of the generalised extended cone ${\overline{\sigma}_{\mathscr{G}} = (\mathbb{R}_{\geq 0} \cup \{\infty\})^{E(\mathscr{G})} / \op{Aut}(\mathscr{G})}$. A \textit{tropical curve} $(\mathscr{G}, m_{\mathscr{G}})$ is a graph $\mathscr{G}$ together with a metrisation $m_{\mathscr{G}}$.

\begin{sloppypar}
The \textit{tropical moduli space} $\overline{\mathcal{M}}_{g,A}^\text{trop}$ is a generalised extended cone complex that parametrises tropical curves $(\mathscr{G}, m_{\mathscr{G}})$, where $\mathscr{G}$ is an $A$-marked genus-$g$ stable graph. It has one generalised extended cone ${\overline{\sigma}_{\mathscr{G}} = (\mathbb{R}_{\geq 0} \cup \{\infty\})^{E(\mathscr{G})} / \op{Aut}(\mathscr{G})}$ for each isomorphism class of graph ${\mathscr{G} \in \op{Stab}_{g,A}}$, and cones are glued as prescribed by edge contraction of graphs. The tropicalisation $\overline{\mathcal{M}}_{g,A}^\text{trop}$ is isomorphic to the skeleton $\overline{\Sigma}(\overline{\mathcal{M}}_{g,A})$ of the Berkovich analytification, as generalised extended cone complexes with integral structure \cite[Theorem 1.2.1]{abramovich2015tropicalization}.
\end{sloppypar}
\end{subsection}

\begin{subsection}{Tropical admissible covers}
A \textit{tropical admissible cover} is a combinatorial admissible cover $\Theta: \mathscr{G} \rightarrow \mathscr{T}$ together with metrisations $m_{\mathscr{G}}$ and $m_{\mathscr{T}}$ of the source and target graphs, respectively, so that if the edge $e$ maps to $\Theta(e)$ with expansion factor $c_e$, then ${c_e \cdot m_{\mathscr{G}}(e) = m_{\mathscr{T}}(\Theta(e))}$. We write $\Theta: (\mathscr{G}, m_{\mathscr{G}}) \rightarrow (\mathscr{T}, m_{\mathscr{T}})$ for this tropical admissible cover.

Given a combinatorial admissible cover $\Theta: \mathscr{G} \rightarrow \mathscr{T}$ and a metrisation $m_{\mathscr{T}}$ of $\mathscr{T}$, there is a unique metrisation $m_{\mathscr{G}}$ of $\mathscr{G}$ so that $\Theta: (\mathscr{G}, m_{\mathscr{G}}) \rightarrow (\mathscr{T}, m_{\mathscr{T}})$ is a tropical admissible cover. The metrisation $m_{\mathscr{G}}$ is defined by the equations $c_e \cdot m_{\mathscr{G}}(e) = m_{\mathscr{T}}(\Theta(e))$, $e \in E(\mathscr{G})$.

\begin{subsubsection}{Tropical admissible covers from $P$-decorated trees}\label{subsubsection:tropical admissible covers from decorated trees}
\begin{sloppypar}
Suppose $\theta \in \op{Stab}_{0,B}(P)$ is an equivalence class of $P$-decorations of $\mathscr{T}$, and that ${(x_e)_{e \in E(\mathscr{T})}}$ is a point of the cone ${\overline{\sigma}_\theta} \subset \overline{\mathcal{H}}^\text{trop}$. The point induces a tropical admissible cover ${\Theta: (\mathscr{G}, m_{\mathscr{G}}) \rightarrow (\mathscr{T}, m_{\mathscr{T}})}$ as follows.
\begin{itemize}
\item $\Theta: \mathscr{G} \rightarrow \mathscr{T}$ is the combinatorial admissible cover induced by $\theta$ (Section \ref{subsubsec:combinatorial admissible covers from decorated trees}).
\item $m_{\mathscr{T}}$ assigns a length $L(e)\,x_e$ to the edge $e$, where $L(e)$ is the least common multiple of the expansion factors on the edges of $\mathscr{G}$ that map to $\mathscr{T}$. (The factor $L(e)$ comes from examining the branch map in local coordinates \cite[Section 5.4]{cavalieri2016tropicalizing}.)
\item $m_{\mathscr{G}}$ is the unique metrisation of $\mathscr{G}$ so that ${\Theta: (\mathscr{G}, m_{\mathscr{G}}) \rightarrow (\mathscr{T}, m_{\mathscr{T}})}$ is a tropical admissible cover.
\end{itemize}
\end{sloppypar}
\end{subsubsection}
\end{subsection}

\begin{subsection}{The tropical target and source maps}\label{subsec:tropical src and tgt maps} 
Suppose $\theta \in \op{Stab}_{0,B}(P)$, and consider the point $(x_e)_{e \in E(\mathscr{T})}$ of the cone $\overline{\sigma}_\theta$. Let ${\Theta: (\mathscr{G}, m_{\mathscr{G}}) \rightarrow (\mathscr{T}, m_{\mathscr{T}})}$ be the tropical admissible cover that is induced by the point.

\begin{subsubsection}{The tropical target map}
The tropical target map $\pi_B^\text{trop}: \overline{\mathcal{H}}^\text{trop} \rightarrow \overline{\mathcal{M}}_{0,B}^\text{trop}$ takes the cone $\overline{\sigma}_\theta$ to the cone $\overline{\sigma}_{\mathscr{T}}$: 
\begin{align*}
\pi_B^\text{trop}: \overline{\sigma}_\theta &\longrightarrow \overline{\sigma}_{\mathscr{T}}\\
(x_e)_{e \in E(\mathscr{T})} & \longmapsto m_\mathscr{T} = (L(e)\, x_e)_{e \in E(\mathscr{T})}.
\end{align*}

\end{subsubsection}

\begin{subsubsection}{The tropical source map}
The tropical source map $\pi_A^\text{trop}: \overline{\mathcal{H}}^\text{trop} \rightarrow \overline{\mathcal{M}}_{g,A}^\text{trop}$ takes the cone $\overline{\sigma}_\theta$ to the cone $\overline{\sigma}_{\mathscr{G}'}$, where $\mathscr{G}'$ is the stabilisation of $\mathscr{G}$, 
\begin{align*}
\pi_A^\text{trop}: \overline{\sigma}_\theta &\longrightarrow \overline{\sigma}_{\mathscr{G}'}\\
(x_e)_{e \in E(\mathscr{T})} &\longmapsto m_{\mathscr{G}'}.
\end{align*}
The metrisation $m_{\mathscr{G'}}$ of $\mathscr{G}'$ is induced by $m_{\mathscr{G}}$: recall that an edge $e$ of $\mathscr{G}'$ can be identified with a path of edges of $\mathscr{G}$; the length $m_{\mathscr{G}'}(e)$ of $e$ is simply the length of the path under $m_{\mathscr{G}}$.
\end{subsubsection}
\end{subsection}

\begin{subsection}{Relation to \texorpdfstring{\boldmath $\overline{\mathcal{H}}^\text{trop}_\text{CMR}$}{HtropCMR}}\label{subsec:relation to CMR}
\begin{sloppypar}
The extended cone complex $\overline{\mathcal{H}}^\text{trop}_{\text{CMR}}$ parametrises tropical admissible covers: a combinatorial admissible cover\footnote{Strictly speaking, $\Theta$ gives a cone $\overline{\sigma}_\Theta$ of $\overline{\mathcal{H}}_\text{CMR}^\text{trop}$ only if $\Theta$ is realisable -- that is, if $\Theta$ is induced by some $\theta \in \op{Stab}_{0,B}(P)$, in the sense of Section \ref{subsubsec:combinatorial admissible covers from decorated trees}.} $\Theta: \mathscr{G} \rightarrow \mathscr{T}$ gives a cone $\overline{\sigma}_\Theta = (\mathbb{R}_{\geq 0} \cup \{\infty\})^{E(\mathscr{T})}$ of $\overline{\mathcal{H}}^\text{trop}_\text{CMR}$. The construction of Section \ref{subsubsection:tropical admissible covers from decorated trees} that assigns a tropical admissible cover to a point of $\overline{\sigma}_\theta \subset \overline{\mathcal{H}}^\text{trop}$ induces a forgetful map of cone complexes 
\begin{align*}
\mathit{forget}: \overline{\mathcal{H}}^\text{trop} &\longrightarrow \overline{\mathcal{H}}^\text{trop}_\text{CMR}.
\end{align*}
Our tropical source and target maps $\pi_{B}^\text{trop}$, $\pi_{A}^\text{trop}$ factor as ${\pi_{B,\text{CMR}}^\text{trop} \circ \mathit{forget}}$, ${\pi_{A,\text{CMR}}^\text{trop} \circ \mathit{forget}}$, respectively, where $\pi_{B,\text{CMR}}^\text{trop}$, $\pi_{A,\text{CMR}}^\text{trop}$ are the tropical target and source maps defined in \cite{cavalieri2016tropicalizing}. 
\end{sloppypar}
\end{subsection}

\begin{subsection}{Tropical Thurston theory}\label{subsec:tropical thurston theory}
To conclude, we make a connection with tropical Thurston theory \cite{ramadas2024thurston}. Suppose that $f: S^2 \rightarrow S^2$ is a branched cover from the sphere to itself whose \textit{postcritical set} -- the set $P_f = \bigcup_{n \geq 1} f^{\circ n}(C_f)$ of iterates of the set $C_f$ of critical points of $f$ -- is finite. Such a map is called a \textit{Thurston map}. Set $A=B=P_f$.

Recall the map $\pi_f: \mathcal{T}_{0,B} \rightarrow \mathcal{H}_f$ from Section \ref{subsubsec:marked holomorphic maps from branched covers}. Ramadas defines a tropicalisation of this map \cite[Definition 7.4]{ramadas2024thurston},
$$\pi_{f,\text{CMR}}^\text{trop}: \mathcal{T}_{0,B}^\text{trop} \longrightarrow \mathcal{H}_{f,\text{CMR}}^\text{trop}.$$
Here, $\mathcal{T}_{0,B}^\text{trop}$ is the cone over the curve complex: a multicurve $\Delta$ gives a cone $\sigma_\Delta = (\mathbb{R}_{\geq  0})^\Delta$ of $\mathcal{T}_{0,B}^\text{trop}$ and cones are glued along faces as prescribed by containment of multicurves.

We reformulate this definition using our tropical Hurwitz space. Define
\begin{align*}
\pi_f^\text{trop}: \mathcal{T}_{0,B}^\text{trop} &\longrightarrow \mathcal{H}_{f}^\text{trop}\\
\sigma_\Delta &\longrightarrow \sigma_\theta\\
(x_\delta)_{\delta \in \Delta} &\longmapsto \left(\frac{1}{L(e)} x_{\delta_e}\right)_{e \in E(\mathscr{T})} 
\end{align*}
where $\theta = \overline{D}_f(\Delta)$, and $\delta_e$ is the simple closed curve of $\Delta$ corresponding to $e$. The next proposition is immediate.

\begin{proposition}
The map $\pi_{f,\text{CMR}}^\text{trop}$ factors as 
$$\pi_{f,\text{CMR}}^\text{trop} = \mathit{forget} \circ \pi_f^\text{trop}.$$
\end{proposition}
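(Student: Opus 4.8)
The plan is to verify the identity cone-by-cone. It suffices to check that, for each multicurve $\Delta$ and each point $(x_\delta)_{\delta \in \Delta}$ of the cone $\sigma_\Delta \subset \mathcal{T}_{0,B}^\text{trop}$, the two maps $\pi_{f,\text{CMR}}^\text{trop}$ and $\mathit{forget} \circ \pi_f^\text{trop}$ produce the same tropical admissible cover. Since both sides are linear on each cone and respect the gluings prescribed by containment of multicurves (equivalently, by edge contraction), it is enough to compare them on the open cones, so I would fix $\Delta$ and unwind both compositions through their definitions.

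First I would trace the composite $\mathit{forget} \circ \pi_f^\text{trop}$. Set $\theta = \overline{D}_f(\Delta)$ with underlying tree $\mathscr{T} = \mathscr{T}_\Delta$, so that an edge $e \in E(\mathscr{T})$ corresponds to the curve $\delta_e \in \Delta$. Applying $\pi_f^\text{trop}$ sends $(x_\delta)_{\delta \in \Delta}$ to the point $\left(\frac{1}{L(e)}\,x_{\delta_e}\right)_{e \in E(\mathscr{T})}$ of $\overline{\sigma}_\theta$. Applying $\mathit{forget}$, which by the construction of Section \ref{subsubsection:tropical admissible covers from decorated trees} assigns to the $e$-coordinate $y_e$ the target edge-length $m_\mathscr{T}(e) = L(e)\,y_e$, then yields the tropical admissible cover $\Theta: (\mathscr{G}, m_\mathscr{G}) \to (\mathscr{T}, m_\mathscr{T})$ with $m_\mathscr{T}(e) = L(e)\cdot\frac{1}{L(e)}\,x_{\delta_e} = x_{\delta_e}$, and with $m_\mathscr{G}$ the unique induced source metrisation. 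The key observation is that the factor $\frac{1}{L(e)}$ built into $\pi_f^\text{trop}$ exactly cancels the factor $L(e)$ built into $\mathit{forget}$.

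It then remains to compare this output with $\pi_{f,\text{CMR}}^\text{trop}(x_\delta)$. By Ramadas's definition \cite[Definition 7.4]{ramadas2024thurston} this map records the tropical admissible cover whose target is $(\mathscr{T}_\Delta, m_\mathscr{T})$ with $m_\mathscr{T}(e)$ equal to the length $x_{\delta_e}$ of the curve $\delta_e$, and whose source carries the induced metrisation; this matches the computation above, and since in both cases $m_\mathscr{G}$ is the unique metrisation making $\Theta$ a tropical admissible cover over $(\mathscr{T}, m_\mathscr{T})$, the two tropical admissible covers coincide. The only genuine point requiring care — and the reason the statement is essentially immediate — is reconciling the two normalisation conventions: Ramadas's map records the target edge $e$ with length exactly $x_{\delta_e}$, whereas the cone coordinate $x_e$ on $\overline{\sigma}_\theta \subset \overline{\mathcal{H}}^\text{trop}$ differs from this target edge length by the factor $L(e)$ (Section \ref{subsubsection:tropical admissible covers from decorated trees}). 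Confirming that the $\frac{1}{L(e)}$ in the definition of $\pi_f^\text{trop}$ is precisely what reconciles these conventions, so that the cancellation is exact on every edge, completes the verification and hence the factorisation.
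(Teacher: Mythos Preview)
Your proof is correct and is exactly the intended argument: the paper declares the proposition ``immediate'' and gives no further proof, and what you have written is precisely the unwinding of definitions that makes that immediacy explicit. The only content is the cancellation of the $L(e)$ factors between $\pi_f^\text{trop}$ and $\mathit{forget}$, which you have identified correctly.
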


Finally, we reformulate the definition of the tropical moduli space correspondence to obtain the `true' tropical moduli space correspondence \cite[Remark 7.1]{ramadas2024thurston}.

\begin{repeatdeftropicalcorrespondence}
The \textit{tropical moduli space correspondence} is
$$(\pi_A^\text{trop}|_{\mathcal{H}_f^\text{trop}}) \circ (\pi_B^\text{trop}|_{\mathcal{H}_f^\text{trop}})^{-1}: \mathcal{M}_{0,P_f}^\text{trop} \rightrightarrows \mathcal{M}_{0,P_f}^\text{trop},$$
where $\pi_B^\text{trop}$ and $\pi_A^\text{trop}$ are the tropical target and source maps on $\mathcal{H}^\text{trop}$, respectively. 
\end{repeatdeftropicalcorrespondence}
\end{subsection}
\end{section}

\newpage
\appendix
\begin{section}{Proof of Proposition \ref{prop:dual trees connected by simple moves}}\label{sec:appendix}

We now provide a proof of Proposition \ref{prop:dual trees connected by simple moves}. As a reminder:

\begin{repeatpropdualtrees}
Suppose $T$ and $T'$ are embeddings of a $B$-marked stable tree $\mathscr{T}$ in $(S^2,B)$ such that $\Delta_T$ and $\Delta_{T'}$ are isotopic. Then there exists a sequence of embeddings of $\mathscr{T}$
$$T=T_0,\,\, T_1,\,\, \ldots,\,\, T_N$$
such that $T_i$ differs from $T_{i-1}$ by a braid move for $i=1, \ldots, N$, and $T_N$ is isotopic to $T'$.
\end{repeatpropdualtrees}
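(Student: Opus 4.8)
The plan is to first normalise the two embeddings so that they literally share the same dual multicurve, then reduce the proposition to a local statement about how a single complementary region is spanned, and finally exploit the fact that braid moves at distinct vertices act independently on $\mathit{ord}$.

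First I would invoke Remark \ref{remark:ambient isotopy}: since $\Delta_T$ and $\Delta_{T'}$ are isotopic, an ambient isotopy of the sphere fixing $B$ carries $\Delta_{T'}$ onto $\Delta_T$, and applying it to $T'$ only moves $T'$ within its isotopy class. Hence I may assume $\Delta_{T} = \Delta_{T'} =: \Delta$ as honest multicurves. Cutting $S^2$ along $\Delta$ produces one planar region $R_v$ (a sphere with boundary and marked points) for each vertex $v$ of $\mathscr{T}$, with one boundary circle for each incident edge and one marked point for each incident leg. Because $\mathscr{T}$ is a tree and each region has genus $0$, the restriction $T \cap R_v$ is a \emph{star}: the centre $v$ joined by disjoint arcs to each boundary circle and each marked point, and likewise for $T' \cap R_v$.

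Next I would establish the key local lemma: inside a fixed region $R_v$, the isotopy class (relative to the marked points, with arc-endpoints free to slide on the boundary circles) of such a star is determined precisely by the cyclic order of its arcs about the centre; every cyclic order is realised; and two stars whose cyclic orders differ by a transposition of cyclically adjacent arcs are related by a single braid move. The content is that no twisting or winding invariant survives: the tree has no cycles, so there is no global winding, and the freedom to slide an arc-endpoint around each boundary circle undoes any Dehn-twisting of that arc about the circle. Ruling out these hidden winding ambiguities, and matching braid moves exactly to adjacent transpositions, is the step I expect to be the main obstacle, and it is what makes the argument technical.

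Finally I would globalise. The conjugation identity $\mathit{ord}_{T'} = (h\,\,\widetilde{h}) \circ \mathit{ord}_T \circ (h\,\,\widetilde{h})^{-1}$ shows that a braid move at a half-edge based at $v$ transposes two elements of the single cycle of $\mathit{ord}_T$ attached to $v$, so it alters the cyclic order at $v$ alone and leaves every other vertex untouched. Since cyclically adjacent transpositions generate the full symmetric group on the half-edges at $v$, a standard adjacent-transposition sorting argument lets me, working one vertex at a time and independently, apply braid moves carrying $\mathit{ord}_T|_v$ to $\mathit{ord}_{T'}|_v$ for every vertex $v$. Calling the resulting embedding $T_N$, braid moves preserve the dual multicurve so $\Delta_{T_N} = \Delta$, while $\mathit{ord}_{T_N} = \mathit{ord}_{T'}$ by construction. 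Applying the local lemma region by region, and noting that the freely sliding boundary endpoints match up consistently when the regions are reglued along $\Delta$, I conclude that $T_N$ is isotopic to $T'$, which is exactly the assertion of the proposition.
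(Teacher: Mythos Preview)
Your reduction to a common dual multicurve and your local lemma are both fine; the gap is in the final globalisation step. You assert that once $\mathit{ord}_{T_N} = \mathit{ord}_{T'}$, the local isotopies in each region $R_v$ assemble into a global isotopy because ``the freely sliding boundary endpoints match up consistently when the regions are reglued along $\Delta$.'' They need not. Concretely: fix an edge $e$ with half-edges at $v$ and $w$, and let $p, p' \in \delta_e$ be the crossing points of $T_N$ and $T'$. Your local lemma in $R_v$ provides an isotopy dragging the endpoint along \emph{some} path in $\delta_e$ from $p$ to $p'$; the local lemma in $R_w$ provides another such path. There is no reason these two paths have the same winding number around $\delta_e$, and the discrepancy is exactly a power of the Dehn twist about $\delta_e$. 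Since a Dehn twist about $\delta_e$ does not change the cyclic order at either endpoint, $T_N$ and $t(T_N)$ (for $t$ such a twist) are genuinely non-isotopic embeddings with identical $\mathit{ord}$ data, so matching $\mathit{ord}$ cannot by itself force isotopy.

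The paper confronts this head-on. It first proves the no-edge case using that half-twists generate $\op{Mod}_{0,B}$, then in the general case pinches $\Delta$, applies the no-edge case on each component sphere, and invokes two lemmas: one (Lemma~\ref{lemma:lifting a pinched tree}) showing that if the pinched trees are isotopic then the original trees differ only by Dehn twists about curves of $\Delta$, and a second (Lemma~\ref{lemma:Dehn twists vs braid moves}) showing that each such Dehn twist is itself realised by a full cycle of braid moves at one half-edge of the corresponding edge (which returns $\mathit{ord}$ to itself). Your argument is missing precisely this second batch of braid moves; inserting them after your cyclic-order-matching step would repair the proof.
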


We start with two technical lemmas. Roughly, the first lemma states that if two embeddings $T$ and $T'$ become isotopic after pinching a common dual multicurve $\Delta$, then $T$ and $T'$ only differ by Dehn twists about $\Delta$.

\begin{lemma}\label{lemma:lifting a pinched tree}
Let $T$ and $T'$ be embeddings of a $B$-marked stable tree $\mathscr{T}$ in $(S^2,B)$, and suppose there exists a common dual multicurve $\Delta = \Delta_T = \Delta_{T'}$. Let $q: S^2 \rightarrow C$ be the quotient map that pinches each curve in $\Delta$ to a point, yielding a tree of spheres $C$. Suppose $q(T)$ and $q(T')$ are isotopic in $C$ rel. $q(B) \cup q(\Delta)$. Then there exists a product $t$ of Dehn twists about the curves of $\Delta$ so that $T$ and $t(T')$ are isotopic. 
\end{lemma}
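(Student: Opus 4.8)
The plan is to reduce the statement to a computation in the mapping class group of $(S^2,B)$, by exploiting the fact that pinching $\Delta$ corresponds to a ``cutting homomorphism'' whose kernel is generated by Dehn twists about $\Delta$. First I would produce a homeomorphism that realises $T'$ from $T$: since $T$ and $T'$ are embeddings of the same labelled tree $\mathscr{T}$, the change-of-coordinates principle (building $h$ edge-by-edge, using that the complement of an embedded spanning tree in $S^2$ is a disk, cf. Remark \ref{remark:ambient isotopy}) yields a homeomorphism $h:(S^2,B)\to(S^2,B)$ fixing $B$ pointwise with $h(T)=T'$. Because $\Delta=\Delta_T$ is dual to $T$, the multicurve $h(\Delta)$ is dual to $h(T)=T'$, hence isotopic to $\Delta_{T'}=\Delta$; after an isotopy I may assume $h$ preserves $\Delta$, fixing each curve $\delta_e$ setwise and preserving its two sides (it cannot swap sides, since each side carries at least two points of $B$, all fixed by $h$).

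Next, since $h$ preserves $\Delta$ it descends to a homeomorphism $\bar h\colon C\to C$ of the tree of spheres, fixing $q(B)$ and the nodes $q(\Delta)$, and satisfying $\bar h(q(T))=q(T')$. The key geometric input is that $\bar h$ is isotopic to the identity $\op{rel. }q(B)\cup q(\Delta)$. I would check this on each component sphere $S_v$ of $C$ (indexed by a vertex $v$ of $\mathscr{T}$): the restriction of $q(T)$ to $S_v$ is an embedded tree spanning all special points of $S_v$, namely the marked points of $q(B)$ on $S_v$ together with the nodes on $S_v$. Stability of $\mathscr{T}$ guarantees each $S_v$ has at least three special points. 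Because $\bar h$ fixes these special points and, by hypothesis, carries $q(T)$ to $q(T')$, which is isotopic to $q(T)$ rel special points, the Alexander method shows $\bar h|_{S_v}$ is isotopic to the identity rel special points (a homeomorphism fixing a spanning tree up to isotopy is trivial, the complementary region being a disk). Assembling these componentwise isotopies gives $\bar h\simeq\op{Id}$.

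Finally I would invoke the cutting/pinching exact sequence for mapping class groups: the homomorphism from the stabiliser of $\Delta$ inside $\op{Mod}(S^2,B)$ to the mapping class group of the pinched surface $C$ (equivalently, to $\prod_v \op{Mod}(S_v)$ with the nodes marked) has kernel generated by the Dehn twists $T_{\delta_e}$, $\delta_e\in\Delta$. Since the class of $h$ maps to the trivial class $[\bar h]=\op{Id}$, it lies in this kernel, so $h$ is isotopic $\op{rel. }B$ to a product $t_0$ of Dehn twists about the curves of $\Delta$. Then $t_0(T)\simeq h(T)=T'$, whence $T$ is isotopic to $t_0^{-1}(T')$; setting $t=t_0^{-1}$, itself a product of Dehn twists about $\Delta$, proves the lemma.

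I expect the third step to be the main obstacle: making the descent precise in the nodal/pinched setting, that is, identifying $\op{Mod}(C)$ with the cut-surface mapping class group and verifying that the kernel of the descent map is \emph{exactly} $\langle T_{\delta_e}\rangle$. This is the standard (but technical) analysis of relative twisting in annular neighbourhoods of $\Delta$ — equivalently, the statement that the only ambiguity in lifting an isotopy through $q$ is a product of Dehn twists about the pinched curves — and I would need to carry it out carefully rather than merely quote it, since the surface here is reducible after cutting and the componentwise bookkeeping (nodes versus marked points of $B$) must be tracked.
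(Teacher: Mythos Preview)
Your argument is correct, but the paper takes a more direct geometric route that sidesteps the mapping class group machinery entirely. Rather than producing a global homeomorphism $h$ and then invoking the kernel of the cutting homomorphism, the paper works directly with the given isotopy on $C$: it lifts the ambient isotopy of $C$ (taking $q(T)$ to $q(T')$) to $S^2$ minus a union $V$ of tubular neighbourhoods of $\Delta$, extends arbitrarily across $V$, and observes that the resulting tree $\widetilde{T}$ agrees with $T'$ outside $V$ while inside each annulus of $V$ the two trees are simple arcs with common endpoints on $\partial V$, hence differ by a power of a Dehn twist. This is precisely the ``analysis of relative twisting in annular neighbourhoods'' that you flag as the main obstacle in your third step, carried out by hand in this specific situation rather than packaged as a statement about the kernel of a descent map. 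Your approach has the advantage of placing the lemma in a broader structural context (the cutting exact sequence), and your Alexander-method step nicely isolates where stability is used; the paper's approach buys elementarity and self-containment, avoiding the need to set up or verify the exact sequence in the nodal setting.
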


\begin{proof} 
By Remark \ref{remark:ambient isotopy}, there exists an ambient isotopy $I_t$ of $C$ that takes $q(T)$ to $q(T')$, fixing the points $q(B)$ and $q(\Delta)$. Take a small open neighbourhood $U \subset C$ of the points $q(\Delta)$, so that on each component sphere of $C$, $U$ looks like a union of disks that each intersects $q(T)$ in a radius. We can assume without loss of generality that $I_t$ fixes these disks (setwise, not pointwise) throughout.

The preimage $V = q^{-1}(U)$ is a union of tubular neighbourhoods of the curves of $\Delta$. The ambient isotopy $I_t$ on $S^2 \smallsetminus U$ lifts to an ambient isotopy $\widetilde{I}_t$ on $S^2 \smallsetminus V$ that takes $T \smallsetminus V$ to $T' \smallsetminus V$. We can `complete' the isotopy $\widetilde{I}_t$ to all of $S^2$ by interpolating on the tubular neighbourhoods.

The resulting ambient isotopy on $S^2$ takes $T$ to a tree $\widetilde{T}$. The tree $\widetilde{T}$ is equal to $T'$ on $S^2 \smallsetminus V$, but is possibly different on $V$. However, $\widetilde{T}$ and $T'$ intersect a given tubular neighbourhood of $V$ in simple arcs with the same endpoints on $\partial V$. There is thus a power of a Dehn twist in that tubular neighbourhood (about a curve in $\Delta$) taking one arc to the other. Letting $t$ be the product of all these powers of Dehn twists, we see that $T$ is isotopic to the tree $\widetilde{T} = t(T')$. 
\end{proof}

The second technical lemma states that if we act on an embedding $T$ by a product of powers of Dehn twists about curves in $\Delta_T$, then the resulting tree differs from $T$ by a sequence of braid moves.

\begin{lemma}\label{lemma:Dehn twists vs braid moves}
Let $T$ be an embedding of a $B$-marked stable tree $\mathscr{T}$ in $(S^2,B)$, and let $t$ be a Dehn twist about a curve of $\Delta_T$. Then both $t(T)$ and $t^{-1}(T)$ can be obtained from $T$ by a sequence of braid moves.
\end{lemma}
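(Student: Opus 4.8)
The plan is to realise the Dehn twist $t$ about a curve $\delta_e \in \Delta_T$ as the net effect of cycling one half-edge of the corresponding edge once around its vertex by braid moves. Fix the edge $e \in E(\mathscr{T})$ with $\delta_e = \delta$, let $h$ be one of its two half-edges, and set $v = \op{base}(h)$. List the half-edges at $v$ in anticlockwise cyclic order as $h = k_0, k_1, \ldots, k_m$, so that $\mathit{ord}_T(k_{i-1}) = k_i$ cyclically. Performing $m$ successive anticlockwise braid moves at $h$ shifts $h$ past each of $k_1, \ldots, k_m$ in turn and returns it to its original slot, so that $\mathit{ord}_T$ is restored. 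What changes is the embedding: each move drags $h$, and hence the entire subtree $T_h$ of $T \smallsetminus v$ containing $h$, across the subtree spanned by $k_i$ along the loop $\gamma_{k_i}^{-1}$.

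Next I would identify the net path along which $T_h$ is carried. Since these moves touch only $h$, the subtrees and loops $\gamma_{k_1}, \ldots, \gamma_{k_m}$ are unchanged throughout, and the composite trajectory of $T_h$ is the concatenation $\gamma_{k_m}^{-1} \concat \cdots \concat \gamma_{k_1}^{-1}$. Using the relation that the anticlockwise product of the loops around $v$, namely $\gamma_{k_m} \concat \cdots \concat \gamma_{k_1} \concat \gamma_{h}$, bounds a disk containing all of $T$ and is therefore nullhomotopic rel $B$ (the same relation used in verifying condition (i.) of Definition \ref{def:decoration}), this composite is homotopic rel $B$ to $\gamma_h^{\pm 1}$. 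I would then observe that $\gamma_h$, nudged off $v$ to the $T_h$-side, is isotopic rel $B$ to $\delta_e$: both bound a regular neighbourhood of $T_h \cup e$, which is a disk, so they agree up to isotopy. This is the one point where I rely on the fact that the curves involved are adapted to the tree $T$ — so that their isotopy classes are pinned down — rather than merely on the partition of $B$ that they induce.

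Finally I would recognise the composite operation as a Dehn twist. Carrying the subtree $T_h$, which is supported in a disk bounded by a curve isotopic to $\delta_e$, once around a loop parallel to that boundary is by definition the boundary Dehn twist $t^{\pm 1}$ about $\delta_e$; the two directions of cycling ($m$ anticlockwise versus $m$ clockwise braid moves at $h$) then yield $t$ and $t^{-1}$ respectively. Since each operation is manifestly a finite sequence of braid moves, this proves that both $t(T)$ and $t^{-1}(T)$ are reachable from $T$ by braiding.

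I expect the main obstacle to be the bookkeeping in this last step: confirming that the cumulative braiding is \emph{exactly} one Dehn twist rather than a higher power or a twist about a neighbouring curve, and matching the direction of braiding to the sign of $t$. This reduces to tracking the composite loop $\gamma_{k_m}^{-1} \concat \cdots \concat \gamma_{k_1}^{-1}$ above together with the orientation conventions, and then invoking the standard fact that pushing a subsurface once around a loop parallel to its boundary equals the boundary Dehn twist.
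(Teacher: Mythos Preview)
Your proposal is correct and takes essentially the same approach as the paper: apply $m = k-1$ braid moves at $h$ (where $k$ is the valence of $\op{base}(h)$) to cycle $h$ once around its vertex, restoring $\mathit{ord}_T$ while re-routing $h$ along a loop isotopic to $\delta_e$, which is a Dehn twist about $\delta_e$. The paper's proof is a terse two-line version of exactly this, asserting the isotopy $T' \simeq t^{\pm 1}(T)$ without the loop-concatenation justification you supply; your extra care with the composite $\gamma_{k_m}^{-1} \concat \cdots \concat \gamma_{k_1}^{-1} \simeq \gamma_h$ and the sign-matching is a welcome elaboration, though note that strictly speaking a braid move re-routes only the half-edge $h$ (the subtree $T_h$ stays put in $S^2$), so your phrasing ``drags the entire subtree $T_h$'' is better read as a statement about the isotopy class of the resulting embedding.
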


\begin{proof}
Let $t$ be a Dehn twist about a curve $\delta_e$ in $\Delta_T$ and let $h$ be one of the half-edges of $e$. Suppose the vertex $\op{base}(h)$ has valence $k$. Applying a sequence of $(k-1)$ clockwise braid moves at the half-edge $h$ to $T$ yields an embedding $T'$. The tree $T'$ is isotopic to $t(T)$. 

Similarly, applying a sequence of $(k-1)$ anticlockwise braid moves at the half-edge $h$ to $T$ yields an embedding $T'$ isotopic to $t^{-1}(T)$. 
\end{proof}

\begin{subsection}{Half-twists and embedded trees}
Before we present the proof of Proposition \ref{prop:dual trees connected by simple moves}, it will be useful to note some facts about \textit{half-twists}. If $\alpha$ is an arc in $(S^2,B)$, then let $H_\alpha:S^2 \rightarrow S^2$ be the half-twist about $\alpha$ \cite[Section 9.1.3]{farbmargalit2012}.

Suppose $T$ is an embedding with no edges. Let $\alpha_b$ be the simple arc (unique up to isotopy in $S^2 \smallsetminus T$) from $b$ to $\mathit{ord}_T(b)$. Consider the set $$A(T) = \{ H_{\alpha_b} :S^2 \rightarrow S^2 \mid b \in B\}$$ of $|B|$ homeomorphisms. This set generates the impure\footnote{$\op{Mod}_{0,B}$ is the group of self-homeomorphisms of the sphere that fix $B$ setwise (rather than pointwise, as required in the definition of $\op{PMod}_{0,B}$) up to isotopy rel. $B$.} mapping class group $\op{Mod}_{0,B}$ \cite[Section 5.1.3]{farbmargalit2012}. One such half-twist $H_{\alpha_b}$ (and its effect on $T$) is shown in Figure \ref{fig:generators A(T)}. The tree $H_{\alpha_b}(T)$ differs from $T$ by a braid move at the leg $b$.

\begin{figure}[ht!]
\centering
\includegraphics[scale=1]{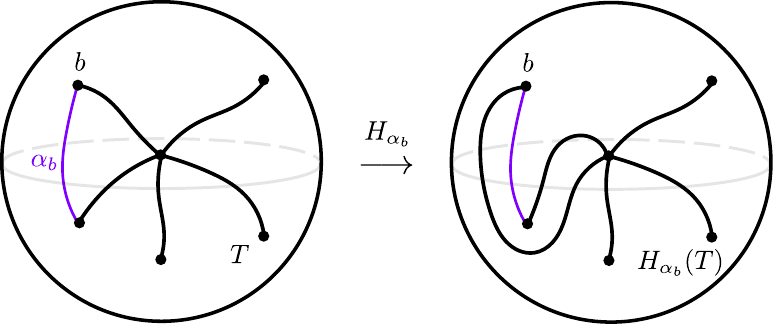}
\caption{The half-twist $H_{\alpha_b}: S^2 \rightarrow S^2$. The tree $H_{\alpha_b}(T)$ differs from $T$ by a braid move at the leg $b$.}
\label{fig:generators A(T)}
\end{figure}

Observe that if $\varphi: S^2 \rightarrow S^2$ is a homeomorphism fixing $B$ setwise, then $H_{\varphi_\alpha} = \varphi \circ H_\alpha \circ \varphi^{-1}$, and so $A(\varphi(T)) = \varphi \circ A(T) \circ \varphi^{-1}$. 
\end{subsection}

\begin{proof}[Proof of Proposition \ref{prop:dual trees connected by simple moves}] 
\begin{sloppypar}
We first deal with the case of trees with no edges. Then we use this case to prove the general case, with the help of Lemma \ref{lemma:lifting a pinched tree} and Lemma \ref{lemma:Dehn twists vs braid moves}.

\textbf{Case 1.} Suppose $\mathscr{T}$ has no edges, so $\emptyset = \Delta_T = \Delta_{T'}$. We wish to find a sequence of half-twists $H_i$ so that 
$$H_N \circ \ldots H_2 \circ H_1 (T) = T'$$
and $H_i \in A(H_{i-1} \circ \ldots \circ H_1(T))$. Then setting $T_i = H_i \circ H_{i-1} \circ \ldots \circ H_1(T)$ gives the desired sequence of trees.

Let $g: S^2 \rightarrow S^2$ be a homeomorphism that takes $T$ to $T'$ (fixing $B$ setwise but not necessarily pointwise). The homeomorphism $g$ is isotopic in $(S^2,B)$ to a product $\overline{H}_N \circ \ldots \circ \overline{H}_1$ of half-twists $\overline{H}_i \in A(T)$, since $A(T)$ generates $\op{Mod}_{0,B}$.

We set
$$H_i = \left(\overline{H}_N \circ \overline{H}_{N-1} \circ \ldots \circ \overline{H}_{N-i+2}\right) \circ \overline{H}_{N-i+1} \circ \left(\overline{H}_{N-i+2}^{-1} \circ \ldots \circ \overline{H}_{N-1}^{-1} \circ \overline{H}_N^{-1}\right).$$
Observe that $H_i \in {A(\overline{H}_N \circ \overline{H}_{N-1} \circ \ldots \circ \overline{H}_{N-i+2}(T))}$. Straightforward cancellations give
$$H_{i-1} \circ \ldots \circ H_1 = \overline{H}_N \circ \overline{H}_{N-1} \circ \ldots \circ \overline{H}_{N-i+2}.$$
Thus, $H_i \in A(H_{i-1} \circ \ldots \circ H_1(T))$ and $H_N \circ \ldots H_2 \circ H_1 (T) = \overline{H}_N \circ \ldots \circ \overline{H}_1 (T) = T'$, as required.

\textbf{Case 2.} Suppose now that $\mathscr{T}$ has at least one edge. By Remark \ref{remark:ambient isotopy}, there is an ambient isotopy of $S^2$ taking $\Delta_{T'}$ to $\Delta_T$, so we may assume that $\Delta = \Delta_T = \Delta_{T'}$.

As in the statement of Lemma \ref{lemma:lifting a pinched tree}, let $q: S^2 \rightarrow C$ be the quotient map that pinches each curve of $\Delta$ to a point, giving a tree of spheres. In each component sphere of $C$, $q(T)$ and $q(T')$ are embeddings with no edges, and so by Case 1 there exists a sequence of braid moves in each component that takes $q(T)$ to $q(T')$.

Applying the same sequence of braid moves to $T$ yields an embedding $T''$. The two embeddings $T'$ and $T''$ satisfy the conditions of Lemma \ref{lemma:lifting a pinched tree}: $T'$ and $T''$ have a common dual multicurve $\Delta$, and $q(T')$ and $q(T'')$ are isotopic in $C$ rel. $q(B) \cup q(\Delta)$. The lemma tells us that there exists a product $t$ of Dehn twists about the curves of $\Delta$ so that $t(T'')$ is isotopic to $T'$. By Lemma \ref{lemma:Dehn twists vs braid moves}, there exists a sequence of braid moves taking $T''$ to $t(T'')$. 

In all, we have found a sequence of braid moves that takes $T = T_0$ to $t(T'') = T_N$, which is isotopic to $T'$, as required. 
\end{sloppypar}
\end{proof}
\end{section}

\newpage 
\bibliographystyle{amsalpha}
\bibliography{{../../references}}

\end{document}